 \renewcommand{\div}{\mathop{\mathrm{div}}\nolimits}
\newtheorem{open}{Open Problem}
\newtheorem*{thm*}{Theorem A}
\newtheorem*{thm**}{Theorem B}
\newtheorem{thm}{Theorem}[section]
\newtheorem{dfn}{Definition}[section]
\newtheorem{lemma}{Lemma}[section]
\newtheorem{prop}{Proposition}[section]
\newtheorem{cor}{Corollary}[section]
\numberwithin{equation}{section}
\begin{document}

\def\IR{{\mathbb{R}}}

\title{Entire solutions of quasilinear symmetric systems}
\maketitle

\begin{center}
\author{Mostafa Fazly\footnote{The author is partially supported by National Sciences and Engineering Research Council of Canada (NSERC) Discovery Grant\#RES0020463.}
}
\\
{\it\small Department of Mathematical and Statistical Sciences, University of Alberta}\\
{\it\small Edmonton, Alberta, Canada T6G 2G1}\\
{\it\small e-mail: fazly@ualberta.ca}\vspace{1mm}
\end{center}

\begin{abstract}  
We study the following quasilinear elliptic system for all $i=1,\cdots,m$
\begin{equation*} \label{}
   -\div(\Phi'(|\nabla u_i|^2) \nabla u_i )   =   H_i(u)  \quad  \text{in} \ \  \mathbb{R}^n 
  \end{equation*}   
 where $u=(u_i)_{i=1}^m: \mathbb R^n\to \mathbb R^m$  and the nonlinearity $ H_i(u) \in C^1(\mathbb R^m)\to \mathbb R$ is a general nonlinearity. Several celebrated operators such as the  prescribed mean curvature, the Laplacian and  the $p$-Laplacian  operators  fit in the above form,  for appropriate $\Phi$.    We establish  a Hamiltonian identity of the following form for all $x_n\in\mathbb R$
\begin{equation*}\label{}
\int_{\mathbb R^{n-1}} \left( \sum_{i=1}^{m} \left[ \frac{1}{2} \Phi\left(|\nabla u_i|^2\right) - \Phi'\left(|\nabla u_i|^2\right) |\partial_{x_n} u_i|^2 \right] - \tilde H(u)  \right) d x'\equiv C,
\end{equation*}
where $x=(x',x_n)\in\mathbb R^{n}$ and $\tilde H$ is the antiderivative of $H=(H_i)_{i=1}^m$.  This can be seen as a counterpart of celebrated pointwise inequalities provided by Caffarelli, Garofalo and Segala in \cite{cgs} and by Modica in \cite{m}.  

 For the case of system of equations, that is when $m\ge 2$, we show that  as long  as    $$\alpha \ge \alpha^*:=\inf_{s>0}\left\{ \frac{2 s \Phi'(s)}{\Phi(s)}\right\}$$ the function 
$I_\alpha(r):=\frac{1}{r^{n-\alpha}} \int_{B_r}  \sum_{i=1}^{m} \Phi(|\nabla u_i|^2) - 2\tilde H(u)$ is monotone nondecreasing in $r$.  This in particular implies that for the prescribed mean curvature, the Laplacian, the $p$-Laplacian and  operators the function $I_\alpha(r)$ is monotone when  $\alpha\ge\alpha^*=2$,  $\alpha\ge\alpha^*=2$ and $\alpha\ge\alpha^*=p$, respectively.  We call this a weak monotonicity formula since for $m=1$ it is shown in \cite{cgs} that $I_\alpha(r)$ is monotone  when $\alpha\ge 1$,  under certain conditions on $\Phi$.     

 We prove De Giorgi type results for $H$-monotone and stable solutions in  two and three dimensions  when the system is symmetric.  The remarkable point is that gradients of all components of solutions are parallel and the angle  between vectors $\nabla u_i$ and $\nabla u_j$ is precisely $\arccos \left(\frac{|\partial_j H_i|}{\partial_j H_i}\right)$.  In addition, we provide an optimal Liouville theorem regarding radial stable solutions of the above system with a general nonlinearity when the system is symmetric.  We announce several natural open problems in this context as well.   
\end{abstract}

\noindent
{\it \footnotesize 2010 Mathematics Subject Classification}. {\scriptsize 35J45, 35J93, 35J92, 35J50.}\\
{\it \footnotesize Keywords:  Quasilinear elliptic systems,  Hamiltonian identity, monotonicity formula, De Giorgi's conjecture, qualitative properties of solutions, prescribed mean curvature}. {\scriptsize }

\tableofcontents

\section{Introduction}\label{intro}

In \cite{cgs}, Caffarelli, Garofalo and Segala  studied the following class of quasilinear equations arising in geometry 
 \begin{equation} \label{mainc}
   \div(\Phi'(|\nabla u|^2) \nabla u )   =   f(u)  \quad  \text{in} \ \  \mathbb{R}^n , 
  \end{equation} 
  where $f$ is a $C^1(\mathbb R)$ and $\Phi\in C^2(\mathbb R^+)$ satisfies certain conditions that follow.  Note that for the case of $\Phi(s)=s$ the above equation is the standard semilinear elliptic equation.  The prescribed mean curvature equation and  the $p-$Laplacian equation, i.e.
\begin{eqnarray}
\label{curv} && \div\left(\frac {\nabla u}{\sqrt{  1+  |\nabla u|^2}}\right )   =   f(u)  \quad  \text{in} \ \  \mathbb{R}^n ,
\\ \label{plap} && \div\left((\epsilon +|\nabla u|^2)^\frac{p-2}{2}  \nabla u\right)   =   f(u)  \quad  \text{in} \ \  \mathbb{R}^n \ \ \text{for } \epsilon>0,
  \end{eqnarray} 
respectively,  fit  in the form of (\ref{mainc}) where $\Phi$ is given respectively by
\begin{eqnarray}
\label{phim} \Phi(s)&=& 2(\sqrt{1+s}-1), \\
\label{phip}\Phi(s)&=&\frac{2}{p}  \left ( (\epsilon +s)^\frac{p}{2} - \epsilon ^{\frac{p}{2}}  \right).
\end{eqnarray}
  
Throughout this paper  we shall assume that $\Phi(s),\Phi'(s)$ and  $\Phi'(s)+2 \Phi''(s) s$ are positive when $s>0$. In addition, without loss of generality let $\Phi(0)=0$.  Borrowing notations from \cite{cgs}, we shall refer to the following conditions often in this paper.  Note that $\Phi$ in (\ref{phim}) and (\ref{phip})  satisfies these conditions, respectively, 
\begin{enumerate}
\item[]  Condition (A).   There exist positive constants $C_1,C_2$ and $\epsilon\ge 0$   such that $\Phi\in C^2(\mathbb R^+)$ and 
 for every $\eta,\zeta\in\mathbb R^n$ 
\begin{eqnarray}
\label{A1c} &&C_1 (\epsilon+|\eta|)^{-1} \le \Phi'(|\eta|^2) \le C_2 (\epsilon+|\eta|)^{-1},  \\
\label{A2c} &&C_1 (\epsilon+|\eta|)^{-1} |\zeta'|^2 \le \sum_{i,j=1}^m a_{i,j}(\eta) \zeta_i\zeta_j \le C_2 (\epsilon+|\eta|)^{-1} |\zeta'|^2 , 
\end{eqnarray}
where $\zeta'=(\zeta,\zeta_{n+1})\in\mathbb R^{n+1}$ is orthogonal to the vector $(-\eta,1)\in \mathbb R^{n+1}$. \\
\item[] Condition (B).   There exist $p>1$, $\epsilon\ge 0$ and positive constants $C_1,C_2$   such that $\Phi\in C^2(\mathbb R^+)$ that
 for every $\eta,\zeta\in\mathbb R^n$, 
\begin{eqnarray}
\label{A1p} &&C_1 (\epsilon+|\eta|)^{p-2} \le \Phi'(|\eta|^2) \le C_2 (\epsilon+|\eta|)^{p-2} , \\
\label{A2p} &&C_1 (\epsilon+|\eta|)^{p-2} |\zeta|^2 \le \sum_{i,j=1}^m a_{i,j}(\eta) \zeta_i\zeta_j \le C_2 (\epsilon+|\eta|)^{p-2} |\zeta|^2 , 
\end{eqnarray}
where $a_{i,j}$ in (\ref{A2p}) and (\ref{A2c}) are given by 
\begin{equation}\label{aij}
a_{i,j}(\eta): = 2 \Phi''(|\eta|^2) \eta_i\eta_j+ \Phi'(|\eta|^2) \delta_{ij}. 
\end{equation}
\end{enumerate}

  One of the main results provided in \cite{cgs} is the following pointwise inequality.  Note that this is a counterpart of the pointwise estimate given by Modica in \cite{m} for the case of $\Phi(s)=s$. 
    \begin{thm*}\label{caf}
  Suppose that $f\in C^2(\mathbb R)$ with $F\ge 0$ and suppose that one of the following conditions hold
  \begin{enumerate}
\item[(i)] Condition (A) holds and $u\in W^{1,p}_{loc}(\mathbb R^n)\cap L^\infty (\mathbb R^n)$ is a solution to (\ref{mainc}) 
\item[(ii)] Condition (B) holds and $u\in C^2(\mathbb R^n)\cap L^\infty (\mathbb R^n)$ is a solution to (\ref{mainc}) and in additon $|\nabla u| \in L^{\infty}(\mathbb R^n)$. 
    \end{enumerate}
 Then for every $x$ 
\begin{equation}\label{cpoint}
2 \Phi'(|\nabla u|^2) |\nabla u|^2 - \Phi(|\nabla u|^2) \le 2F(u). 
\end{equation}
  \end{thm*}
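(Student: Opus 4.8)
The plan is to establish \eqref{cpoint} by a maximum--principle argument for the quasilinear $P$--function, extending Modica's method from the semilinear case. Set
$$
P(x):= 2\,\Phi'\!\left(|\nabla u|^2\right)|\nabla u|^2 - \Phi\!\left(|\nabla u|^2\right) - 2F(u),
$$
so that \eqref{cpoint} is exactly the assertion $P\le 0$ on $\mathbb R^n$. The case $n=1$ is elementary: multiplying the ODE by $u'$ and integrating shows that $P$ itself is constant, and evaluating that constant along a sequence $x_k$ with $u'(x_k)\to 0$ --- which exists because $u$ is bounded --- gives $P\equiv -2c$ with $c=\lim F(u(x_k))\ge 0$ since $F\ge 0$; so assume from now on $n\ge 2$. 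Next I would record the regularity: under Condition (A) the interior gradient estimate for the prescribed--mean--curvature--type operator yields $|\nabla u|\in L^\infty(\mathbb R^n)$ (under Condition (B) this is part of the hypotheses), and on the open set $\Omega:=\{\nabla u\ne 0\}$ equation \eqref{mainc} is uniformly elliptic on compact subsets, so De Giorgi--Nash--Moser and Schauder theory give $u\in C^{2,\alpha}_{\mathrm{loc}}(\Omega)$ --- enough since $f\in C^2$. Note that on $\mathbb R^n\setminus\Omega$ one has $P=-\Phi(0)-2F(u)=-2F(u)\le 0$ automatically, so all the work is confined to $\Omega$.

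The core of the proof is a differential inequality for $P$ on $\Omega$. Differentiating \eqref{mainc} and carrying out a Bochner--Weitzenb\"ock computation for the linearized operator $L_0 v:=\div\!\big(\Phi'(|\nabla u|^2)\nabla v + 2\Phi''(|\nabla u|^2)\langle\nabla u,\nabla v\rangle\nabla u\big)$ --- whose coefficient matrix is precisely $a_{ij}(\nabla u)$ from \eqref{aij} --- one expands $L_0 P$ in terms of $D^2u$, the derivatives $\Phi',\Phi'',\Phi'''$, and $f'(u)$. The genuinely second--order terms are bounded below by the refined Kato inequality
$$
|D^2u|^2 - \big|\nabla|\nabla u|\big|^2 \ \ge\ \frac{1}{n-1}\Big(\Delta u - \big\langle D^2u\,\nu,\nu\big\rangle\Big)^2,\qquad \nu:=\frac{\nabla u}{|\nabla u|},
$$
(valid for $n\ge 2$), after which $\Delta u-\langle D^2u\,\nu,\nu\rangle$ is rewritten, via \eqref{mainc}, solely in terms of $\langle\nabla P,\nabla u\rangle$; the surviving lower--order terms are linear in $\nabla P$. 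Using crucially the three positivity hypotheses $\Phi,\Phi'>0$ and $\Phi'+2s\Phi''>0$ --- which make $a_{ij}$ positive definite and let the ``bad'' cross terms be absorbed --- together with $F\ge 0$, this yields
$$
\div\!\big(A(x)\nabla P\big) + \big\langle b(x),\nabla P\big\rangle \ \ge\ 0 \qquad\text{on }\ \Omega,
$$
with $A$ symmetric, locally uniformly elliptic on $\Omega$, and $b$ locally bounded on $\Omega$: i.e.\ $P$ is a subsolution of a locally uniformly elliptic equation wherever $\nabla u\ne 0$.

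Finally I would upgrade ``subsolution'' to ``$\le 0$'' by a maximum--principle--at--infinity argument. Suppose, for contradiction, that $M:=\sup_{\mathbb R^n}P>0$. If $M$ is attained at some $x_0$, then $x_0\in\Omega$ (otherwise $P(x_0)\le 0$), and the strong maximum principle forces $P\equiv M$ on the connected component of $\Omega$ through $x_0$; tracing equality back through the Kato and Bochner steps forces $u$ to be one--dimensional on that component, so the explicit $n=1$ relation $2\Phi'(|\nabla u|^2)|\nabla u|^2-\Phi(|\nabla u|^2)=2F(u)+M$ holds with $M>0$, and integrating this first--order ODE along the gradient lines makes $u$ traverse all of $\mathbb R$ within a bounded range of values --- impossible since $\|u\|_\infty<\infty$. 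If $M$ is not attained, pick $x_k$ with $P(x_k)\to M$, translate $u_k:=u(\cdot+x_k)$, and use the uniform $C^{1,\alpha}$ bound globally plus uniform $C^{2,\alpha}$ bounds on nondegeneracy sets to extract a limit $u_\infty$ solving \eqref{mainc} with $P_\infty$ attaining its maximum $M>0$ at the origin, reducing to the previous case. I expect the second step --- establishing the quasilinear Bochner identity and bookkeeping it into the clean inequality $\div(A\nabla P)+\langle b,\nabla P\rangle\ge 0$ --- to be the main obstacle, since this is exactly where the generality of $\Phi$ (as opposed to $\Phi(s)=s$) forces one to control the $\Phi''$ and $\Phi'''$ contributions and to invoke all three positivity hypotheses on $\Phi$ in order to keep the principal part elliptic and the remainder of pure gradient type.
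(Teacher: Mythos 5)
First, a point of reference: the paper you are working from does not prove this statement at all --- it is Theorem A, quoted verbatim from Caffarelli--Garofalo--Segala \cite{cgs}, and the natural benchmark is therefore the original proof in that reference. Your strategy (the quasilinear $P$-function $P=2\Phi'(|\nabla u|^2)|\nabla u|^2-\Phi(|\nabla u|^2)-2F(u)$, a differential inequality of the form $\div(A\nabla P)+\langle b,\nabla P\rangle\ge 0$ on $\{\nabla u\neq 0\}$, and a maximum-principle/Liouville argument, with the observation that $P\le 0$ automatically on $\{\nabla u=0\}$ since $\Phi(0)=0$ and $F\ge 0$) is indeed the same basic strategy as in \cite{cgs}, and your $n=1$ reduction and the rigidity contradiction (constancy of $P>0$ forcing $|\nabla u|$ bounded below, hence $u$ unbounded) are sound.

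However, as submitted this is a plan rather than a proof, and the gap is exactly where the theorem lives. The Bochner-type computation yielding $\div(A\nabla P)+\langle b,\nabla P\rangle\ge 0$ is asserted, not carried out; this is the step in which the hypotheses $\Phi'>0$, $\Phi'+2s\Phi''>0$, the structure of $a_{ij}$ in (\ref{aij}), and conditions (A)/(B) actually enter, where the $\Phi''$ and $\Phi'''$ terms must be absorbed, and where one must make sense of the inequality only in a weak ($W^{1,2}_{loc}$) sense because under condition (B) with $\epsilon=0$ (or condition (A) with $\epsilon=0$) solutions are at best $C^{1,\alpha}$ near the critical set and $P$ is not twice differentiable there; you acknowledge it as ``the main obstacle'' and leave it unproved. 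The second genuine gap is the non-attained-supremum step: your translation/compactness argument needs uniform global $C^{1,\alpha}$ estimates for the translates and stability of the (degenerate) equation and of the weak differential inequality for $P$ under $C^1_{loc}$ limits; the ``uniform $C^{2,\alpha}$ bounds on nondegeneracy sets'' you invoke are not uniform in the translation parameter when the gradient of the limit degenerates, and under condition (A) with $\epsilon=0$ the operator is of $1$-Laplacian type, so even the interior gradient estimate and $C^{1,\alpha}$ theory you take for granted are not standard. The original argument in \cite{cgs} is organized precisely to circumvent these issues (a quantitative weak-form inequality for $P$ with a good extra term, rather than a soft compactness argument), so to complete your proof you would either have to supply those uniform estimates and the limit procedure in the degenerate cases, or replace the final step by the more quantitative argument of \cite{cgs}. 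Finally, the equality analysis (``tracing equality back through the Kato and Bochner steps forces $u$ to be one-dimensional'') is also only sketched and needs the clopen-component argument to conclude that $\{\nabla u\neq0\}$ is all of $\mathbb R^n$ before the one-dimensional reduction can be run.
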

In particular, the following pointwise estimates hold for specific $\Phi$. 
  \begin{itemize}
  \item Suppose that $\Phi(s)=s$, then 
     \begin{equation}\label{modp}
     |\nabla u|^2 \le 2 F(u) \ \ \text{in} \ \  \mathbb R^n,
     \end{equation}
   where $u$ is a  bounded  solution of the semilinear equation $\Delta u=f(u)$ in $\mathbb R^n$, provided by Modica in \cite{m}. 
    \item Let   $\Phi(s)=2(\sqrt{1+s}-1)$. Then  
       \begin{equation}\label{pointc}
\frac{\sqrt{  1+  |\nabla u|^2}-1}{\sqrt{  1+  |\nabla u|^2}} \le F(u) \ \ \text{in} \ \  \mathbb R^n,
     \end{equation}
 for bounded solutions of the  prescribed mean curvature equation that is $\div\left(\frac {\nabla u}{\sqrt{  1+  |\nabla u|^2}}\right )   =   f(u)$ in $\mathbb R^n$.  
  \item  Suppose that $\Phi(s)=\frac{2}{p}s^{\frac{p}{2}}$. Then  
\begin{equation}\label{pointcp}
|\nabla u|^p \le \frac{p}{p-1} F(u) \ \ \text{in} \ \  \mathbb R^n,
     \end{equation}
  where $u$ is a bounded solution of the $p$-Laplace equation  $\div\left( |\nabla u|^{p-2}  \nabla u \right)   =   f(u)$ in $\mathbb R^n$.  
  \end{itemize}
We study classical solutions of the following quasilinear system of equations 
 \begin{equation} \label{main}
   -\div(\Phi'(|\nabla u_i|^2) \nabla u_i )   =   H_i(u)  \quad  \text{in} \ \  \mathbb{R}^n, 
  \end{equation}   
 where $u=(u_i)_{i=1}^m: \mathbb R^n\to \mathbb R^m$  and $ H_i(u) \in C^1(\mathbb R^m)\to \mathbb R$ for all $i=1,\cdots,m$.   The above system has variational structure  and the  associated   energy functional is given by 
 \begin{equation}\label{Eu=}
E(u)= \int \sum_{i=1}^{m} \frac{1}{2}\Phi(|\nabla u_i|^2) - \tilde H(u),
 \end{equation}
 where $\tilde H$ is defined such that $\partial_i\tilde H(u)=H_i(u)$. Throughout this paper we use the notation $u=(u_i)_{i=1}^m$,  $H(u)=(H_i(u))_{i=1}^m$ and $\partial_j H_i(u)=\frac{\partial H_i(u)}{\partial {u_j}} $. We assume that  $\partial_i H_j (u)\partial_j H_i (u) > 0$ for $1\le i\le j\le m$.    The next definition  is the notion of the symmetric systems, introduced by the author in \cite{mf}.   Symmetric systems play a fundamental role throughout this paper when we deal with the energy functional given in (\ref{Eu=}) and when we study system (\ref{main}) with a general nonlinearity $H(u)$.  Note that for the scalar equation case, that is when $m=1$,  (\ref{main}) is clearly symmetric. 
\begin{dfn}\label{symmetric} We call system (\ref{main}) symmetric if the matrix of partial derivatives of all components of $H$ given by
 \begin{equation} \label{H}
\mathbb{H}:=(\partial_i H_j(u))_{i,j=1}^{m}, 
 \end{equation}
  is symmetric. 
   \end{dfn}
Hamiltonian identities are quite well-known in both  mathematics and physics as important tools to study qualitative behaviour of entire solutions of differential equations and systems. They often directly or indirectly lead to certain properties which could be of great importance in the fields as well,  such as  monotonicity formulae.    Consider the following symmetric system of ordinary differential equations  that is a particular case of (\ref{main}), 
\begin{equation}\label{ODE}
-u_i''= \partial_i H(u) \ \ \text{in} \ \ \mathbb R. 
\end{equation}  
It is straightforward to see that the  following Hamiltonian identity holds for  solutions of (\ref{ODE}) 
\begin{equation}\label{ODE11}
\frac{1}{2} \sum_{i=1}^m u_i'^2 +H(u) \equiv C\ \ \text{in } \ \ \mathbb R,
\end{equation}  
 where  $C$ is a constant.  Equivalently,  one can rewrite (\ref{ODE}) in the form of a first order Hamiltonian system  
\begin{eqnarray*}
 \left\{ \begin{array}{lcl}
\hfill -u_i'&=& \partial_{v_i} \bar H(u,v)   \ \ \text{in}\ \ \mathbb{R},\\   
\hfill -v_i'&=& -\partial_{u_i} \bar H(u,v)   \ \ \text{in}\ \ \mathbb{R},
\end{array}\right.
  \end{eqnarray*}
where $\bar H(u,v) =\frac{1}{2} \sum_{i=1}^m v_i^2+H(u)$. Note that $\bar H(u,v) \equiv C$ on trajectories of solutions.  These equations generalize Newton's third law that is $F = ma$  to system of equations where the momentum is not simply mass times velocity. The Hamiltonian $\bar H(u,v)$ normally  represents the total energy of the system. We refer interested readers to \cite{ar,h} for some original  information regarding physical meaning of the system and to \cite{iv,jm} and references therein for variational theory of Hamiltonian systems.

Gui in \cite{gui}  considered the gradient system $-\Delta u_i= \partial_i H(u)$, that is a higher-dimensional  counterpart of (\ref{ODE}), and  established the following Hamiltonian identity, 
\begin{equation}\label{intHamGui}
 \int_{R^{n-1}} \left[ \frac{1}{2} \sum_{i=1}^m \left( |\nabla_{x'} u_i|^2 - |\partial_{x_n} u_i|^2 \right) - H(u(x)) \right] dx'\equiv C,
 \end{equation}
for $x=(x',x_n)\in\mathbb R^{n-1}\times \mathbb R$.    In this paper, we provide an extension of this inequality for solutions of quasilinear symmetric system (\ref{main}).  One might expect, at the first glance, that just  replacing derivative terms with $ \Phi(|\nabla_{x'} u_i|^2)- \Phi(  |\partial_{x_n} u_i|^2 )$ could simply give the Hamiltonian identity for solutions of (\ref{main}).  Instead,  the identity follows the structure of the pointwise estimate provided by Caffarelli, Garofalo and Segala in \cite{cgs} and it is of the form
\begin{equation}\label{intHam}
\int_{\mathbb R^{n-1}} \left( \sum_{i=1}^{m} \left[ \frac{1}{2} \Phi\left(|\nabla u_i|^2\right) - \Phi'\left(|\nabla u_i|^2\right) |\partial_{x_n} u_i|^2 \right] - \tilde H(u(x))  \right) d x'\equiv C.
\end{equation}
Note that when $\Phi(s)=s$, the identity (\ref{intHam})  recovers (\ref{intHamGui}). This  then  explains why the difference of partial derivatives that is $|\nabla_{x'} u_i|^2 - |\partial_{x_n} u_i|^2$ appears in (\ref{intHamGui}).    

If we set $\Phi$ to be the ones given in (\ref{phim}) and (\ref{phip}) we can have the Hamiltonian identity for the prescribed mean curvature equation and  the $p$-Laplacian equation, respectively.  Let us mention  this remarkable point again  that the Hamiltonian identity (\ref{intHam}) has a very similar structure  as  pointwise estimates (\ref{cpoint}) and  (\ref{modp}), provided by Caffarelli, Garofalo and Segala in \cite{cgs} and by Modica in \cite{m}.     Therefore,  (\ref{intHam}) can be seen as a counterpart of (\ref{cpoint}) for the case of system of equations, i.e. $m\ge 2$.  

The Hamiltonian identity (\ref{intHam}) motivates us to look for a monotonicity formula for solutions of (\ref{main}). So, set 
\begin{equation}\label{intI}
I_\alpha(r):=\frac{1}{r^{n-\alpha}} \int_{B_r} \left[ \sum_{i=1}^{m} \Phi(|\nabla u_i|^2) - 2\tilde H(u)\right]. 
\end{equation}
For the case of scalar equations, that is when $m=1$, it is proved by Caffarelli, Garofalo and Segala in \cite{cgs} that when $\Phi$ satisfies one of conditions (A) or (B) then the function $I_\alpha(r)$ is monotone nondecreasing in $r$ when $\alpha\ge 1$.  They have used the pointwise inequality (\ref{cpoint}) to establish this monotonicity formula.   For the case of $m \ge 2$, we show that $I_\alpha(r)$ is monotone nondecreasing in $r$ when 
\begin{equation}\label{intalphaphi}
\alpha \ge \alpha^*:=\inf_{s>0}\left\{ \frac{2 s \Phi'(s)}{\Phi(s)}\right\}.
 \end{equation}
We call this a weak monotonicity formula since the constant $\alpha^*$ must be greater than one, due to some general assumptions on $\Phi$.   To clarify this,  define an auxiliary function $h(s):=-2\Phi'(s) s +\alpha\Phi( s)$ in the light of (\ref{intalphaphi}).  Note that from assumptions on $\Phi$, i.e. $2s\Phi''(s)+\Phi'(s)>0$ when $s>0$ and $h(0)=\alpha \Phi(0)=0$ one can see that  $h'(s)=-[2s\Phi''(s)+\Phi'(s)]+(\alpha-1) \Phi'(s)$ is negative  when $\alpha \le  1$.  For certain functions $\Phi$,  one can get the Laplacian, the $p$-Laplacian and the prescribed mean curvature operators and then the function $I_\alpha(r)$ is monotone in $r$ when  $\alpha\ge\alpha^*=2$,  $\alpha\ge\alpha^*=p$ and $\alpha\ge\alpha^*=2$, respectively,  see Corollary \ref{corphis}.    On the other hand, for both cases of scalar equations and system of equations, i.e. $m\ge 1$, it is shown in  Theorem \ref{enopthm} that the following upper bound on the energy holds,
\begin{equation}\label{intenergybound}
\int_{B_{R}}  \left[  \sum_{i=1}^m \Phi(|\nabla u_i|^2) -2 \tilde H(u)+2\tilde H(a) \right] d x \le C R^{n-1},
\end{equation}
where  $\lim_{x_n\to \infty} u_i( x',x_n)=a_i$ for all $ x=( x', x_n)\in\mathbb{R}^{n}$ and $a=(a_i)_{i=1}^m$.  This implies that for the case of system of equations, $m\ge2$, the strong monotonicity formula, that is when $\alpha \ge 1$,  should hold just like in the case of  scalar equations for  $m=1$. This remains as an open problem. Note also that conditions (A) and (B) are not necessary for our monotonicity formula when $m\ge 2$.

We apply monotonicity formulae to establish Liouville theorems for solutions of (\ref{main}) with a finite energy.  We refer interested readers to Alikakos in \cite{ali,ali2} and to Alikakos and Fusco in \cite{af}, to  Caffarelli, Garofalo and Segala in \cite{cgs} and  to Farina in \cite{f1,f3} regarding Liouville theorems for various equations and systems with a finite energy.   Note that the above monotonicity formulae, in both weak and strong forms,  are related to the ones given for harmonic maps
 by Schoen and Uhlenbeck in \cite{su} and  for
minimal surfaces by Simon in \cite{si}, by Ecker in \cite{ec} and  by Schoen in \cite{sc} and for elliptic  equations   by Caffarelli and Lin in \cite{cl},  by Modica in \cite{m3} and references therein.

 Regarding the scalar equation case, in this context,  monotonicity of a solution $u$ is straightforward to define and it refers to solutions that are monotone in one direction, e.g.  when $\partial_{x_n} u_i$ does not change sign, see \cite{ac,aac,gg1,gg2,dkw,sav,cgs,fsv,f1,f2,f3, mm, DeGiorgi,big} and references therein.   However, the notion of monotonicity of solutions for the case of system of equations, that is when $m\ge 2$, seems to be slightly more sophisticated.   Ghoussoub and the author in \cite{fg} introduced the following concept of  monotonicity for the case of system of equations. Note that the sign of  partial derivatives of the  nonlinearity $H$ could potentially have an impact on the monotonicity of solutions.  This motivates us to call this notion as $H$-monotonicity.


 \begin{dfn}\label{Hmon} A solution $u=(u_k)_{k=1}^m$ of (\ref{main}) is said to be  $H$-monotone if the following holds,
\begin{enumerate}
 \item[(i)] For every $1\le i \le m$, each $u_i$ is strictly monotone in the $x_n$-variable (i.e., $\partial_{x_n} u_i\neq 0$).

\item[(ii)]  For all $i\le j$,  we have 
  \begin{equation}\label{huiuj}
\hbox{$\partial_j H_i(u) \partial_{x_n} u_i(x) \partial_{x_n} u_j (x) > 0$  for all $x\in\mathbb {R}^n$.}
\end{equation}
\end{enumerate}
See \cite{fg,mf} for more details.  
\end{dfn}
Note also that in the assumption (ii) of the $H$-monotonicity each of $\partial_j H_i(u)$, $ \partial_{x_n} u_i(x) $ and $\partial_{x_n} u_j (x)$ has a fixed sign and the multiplication must be positive. This  implies a combinatorial assumption on the system (\ref{main}).  We refer to systems that admit such  an assumption  as {\it orientable} systems. For an example,  consider $m=2$ then for cross type solutions, i.e.   $\partial_{x_n} u_1>0$ and $\partial_{x_n} u_2<0$,  we are required  to set $\partial_1 H_2(u),\partial_2 H_1(u)<0$.  If we set $H_1(u)=H_2(u)=-\frac{1}{2}u^2_1 u^2_2$ then this gives a two component system of equations that arrises in Bose-Einstein condensates, see \cite{blwz} and references therein. 

The next definition  is the notion of stable solutions for the case of system of equations. 
\begin{dfn} \label{stable}
A solution $u=(u_k)_{k=1}^m$ of (\ref{main}) is called stable when there exists a sequence of  functions   $\phi=(\phi_k)_{k=1}^m$ such that each $\phi_i$ does not change sign and  $\partial_j H_i(u)  \phi_j \phi_i>0$ for all $i,j=1,\cdots,m$. In addition,  $\phi$ satisfies   the following 
 \begin{equation} \label{L}
-\div(\mathcal A(\nabla u_i) \nabla \phi_i)= \sum_{j=1}^m \partial_j H_i(u)  \phi_j    \ \ \ \text{in}\ \ \mathbb R^n,
  \end{equation}
where  for any $\eta \in\mathbb R^n$ the matrix $\mathcal A(\eta)$   is defined by $\mathcal A(\eta):=(a_{i,j}(\eta))_{i,j=1}^n$ for $a_{i,j}(\eta)$ in (\ref{aij}). 
\end{dfn} 
Let us mention that the notion of stability can be given for weak solutions as 
\begin{equation}\label{weakstab}
 \int \mathcal A(\nabla u_i) \nabla \phi_i \cdot \nabla \zeta_i= \sum_{j=1}^m \partial_j H_i(u)  \phi_j \zeta_i, 
 \end{equation}  where $\zeta=(\zeta_i)_{i=1}^m$ is a sequence of test functions. Accordingly one can see that   all results provided in the present paper are valid for weak solution as well.   For the sake of simplicity in notation, we present our results for classical solutions.  We refer to \cite{CR} and references therein for the use of stability for nonlinear elliptic eigenvalue
problems. 

 Since (\ref{L}) is a linearization of (\ref{main}), one can see that every $H$-monotone solution is a stable solution via differentiating (\ref{main}) with respect to $x_n$.   The notion of stability as well as the  monotonicity formula for $I_\alpha(r)$ when $\alpha\ge 1$  and the pointwise inequality  (\ref{modp}), provided by Modica,  play key role in settling the De Giorgi's conjecture (1978),  see \cite{DeGiorgi}.  The conjecture states that bounded monotone solutions of Allen-Cahn equation are one-dimensional solutions at least up to eight dimensions. There is an affirmative answer to this conjecture for almost all dimensions.  More precisely,  for two dimensions  Ghoussoub and Gui in \cite{gg1} and for three dimensions Ambrosio and Cabr\'{e} in \cite{ac} and with Alberti in \cite{aac} gave a proof to this conjecture not only for Allen-Cahn equation but also for any equation of the form $-\Delta u=f(u)$ where $f$ is a general nonlinearity that is locally Lipschitz.   For dimensions $4\le n \le 8$ there are various partial results under certain extra (natural) assumptions on solutions by Ghoussoub and Gui in \cite{gg2},  by Savin in \cite{sav} and references therein.  Note that there is an example by del Pino, Kowalczyk and Wei in \cite{dkw}  showing that eight dimensions is the critical dimension.   In two dimensions  regarding  the De Giorgi's conjecture,  we refer to  Farina,  Sciunzi and Valdinoci in \cite{fsv} for a geometrical approach and  to Modica and Mortola in \cite{mm} for some partial results  under the additional assumption that the level sets of solutions are the graphs of an equi-Lipschitzian family of functions.   

  In \cite{fsv,cgs,dg}, authors considered quasilinear scalar equations of the form of (\ref{main}) when $m=1$ and provided one-dimensional symmetry and De Giorgi type results.      Note that Ghoussoub and the author in \cite{fg} provided De Giorgi type results for elliptic systems of the form $-\Delta u_i=\partial_i H(u)$ in lower dimensions for a general nonlinearity $H$.

In this paper, we first provide a geometric Poincar\'{e} inequality and a linear Liouville theorem for stable and $H$-monotone solutions of the quasilinear system (\ref{main}). Then we apply these to conclude De Giorgi type results for $H$-monotone and stable solutions in two and three dimensions  when the system is symmetric.  For coupled systems,  that is when not all $\partial_j H_i$ vanish for $1\le i< j\le m$, it is natural to expect that there should be a relation between two arbitrary components $u_i$ and $u_j$.    In this regard, we show that gradients of all components of solutions are parallel and the angle in between $\nabla u_i$ and $\nabla u_j$ is precisely $\arccos \left(\frac{|\partial_j H_i|}{\partial_j H_i}\right)$ when $\partial_j H_i \neq 0$. This is a  consequence of the geometric Poincar\'{e} inequality, see Theorem \ref{lempoin}. 
 
 The main focus of the present paper is the study of qualitative properties of solutions of system (\ref{main}) with a general nonlinearity.   In this paper,  we prove a Liouville theorem for bounded stable  solutions of (\ref{main}) in dimensions $n \le 4$ for a general nonlinearity $H=(H_i)_{i=1}^m$  whenever each $H_i$ is nonnegative.  To do so,  we suppose that $\Phi$ satisfies either condition (A) or (B).  Note that for the case of semilinear equations similar results are given by Dupaigne and Farina in \cite{df} and for the case of semilinear systems by Ghoussoub and the author in \cite{fg}. 
 In addition,  we give a classification of radial stable solutions of symmetric system (\ref{main}) when $\Phi(s)=\frac{2}{p} s^\frac{p}{2}$ for all $m\ge 1$.  More precisely, we show that there exists a positive constant $C_{n,m,p}$  such that for any $r$, the following pointwise lower bound holds, 
  \begin{equation}\label{intlower}
 \sum_{i=1}^{m} | u_i(r)| \ge C_{n,m,p}   \left\{
                      \begin{array}{ll}
                        r^{\frac{1}{p} \left(p+2-n+2\sqrt{\frac{n-1}{p-1}}\right)}, & \hbox{if $n \neq \frac{4p}{p-1}+p$,} \\
                      \log r, &  \hbox{if $n =  \frac{4p}{p-1}+p$.} 
                           \end{array}
                    \right.
                    \end{equation}
  This in particular implies that bounded radial stable solutions must be constant in dimensions $1 \le n< \frac{4p}{p-1}+p$.    The notion of symmetric systems seems to be essential to study (\ref{main}) with a general nonlinearity.      Note also that the critical dimension    $ n= \frac{4p}{p-1}+p$ for radial solutions  is much higher than the dimension $n=4$ derived for not necessarily radial solutions.  Let us mention that  for the case of semilinear equations, that is $\Phi(s)=s$ and $m=1$, it is proved by Cabr\'{e}-Capella \cite{cc1,cc2} and Villegas \cite{sv} that any bounded radial stable solution of (\ref{main})  has to be constant provided $1\le n < 10$ when $H\in C^1(\mathbb R)$ is a general nonlinearity.  In addition, for the case of scalar equation and when $\Phi(s)=\frac{2}{p} s^\frac{p}{2}$,  a counterpart of the above Liouvillle theorem is provided in \cite{CS,ces}.

Here is how this paper is  organized. Shortly after,  in Section \ref{secham} we provide a Hamiltonian identity for solutions of system (\ref{main}).  We also prove  monotonicity formulae and we apply it to establish a Liouville theorem for solutions with finite energy.  A few open problems are provided in this section as well.   Section \ref{secpoin} is devoted to some  estimates needed to prove De Giorgi type results and Liouville theorems in next sections.  We start the section with a stability inequality and then we apply this inequality to establish a geometric Poincar\'{e} inequality.   In Section \ref{secde}, we establish De Giorgi type results for $H$-monotone and stable solutions of symmetric system (\ref{main}). In addition, we apply the  geometric Poincar\'{e} inequality, provided in Section \ref{secde}, to find a relation between gradients of all components of solutions of (\ref{main}).   Finally in Section \ref{secop},  we  prove  Liouville theorems for stable solutions of (\ref{main}) with a general nonlinearity,  with on case requiring the solutions to be also radial.    The concept of symmetric systems seems to be crucial to prove such an optimal  Liouville theorem for radial solutions.

\section{Hamiltonian identities  and monotonicity formulae }\label{secham}
We start this section by the following Hamiltonian identity.   
\begin{thm}\label{hamthm}
Suppose that $u=(u_i)_{i=1}^m$ is a solution of (\ref{main}) and let $x=(x',x_n)\in\mathbb R^{n-1}\times \mathbb R$. Then there exists a constant $C$ such that  the following Hamiltonian identity holds for every $x_n \in \mathbb R$
\begin{equation}\label{hamil}
\int_{\mathbb R^{n-1}} \left( \sum_{i=1}^{m} \left[ \frac{1}{2} \Phi(|\nabla u_i|^2) - \Phi'(|\nabla u_i|^2) |\partial_{x_n} u_i|^2 \right] - \tilde H(u)  \right) d x'\equiv C,
\end{equation}
when the above integral is finite for at least one value of $x_n$ and in addition the integral in (\ref{integ})
below tends to zero as $R$ goes to infinity along a sequence.
\end{thm}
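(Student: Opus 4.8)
The plan is to exhibit a divergence-free vector field on $\mathbb R^n$ whose $n$-th component coincides, up to a sign, with the integrand in \eqref{hamil}, and then to integrate the resulting pointwise conservation law over a finite cylinder and let its radius tend to infinity. Since $u$ is a classical solution and $\Phi\in C^2(\mathbb R^+)$, every quantity below is of class $C^1$.

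First I would introduce the field $\xi=(\xi_1,\dots,\xi_n)$ given by
\begin{equation*}
\xi_j:=\sum_{i=1}^m \Phi'(|\nabla u_i|^2)\,\partial_j u_i\,\partial_{x_n} u_i-\delta_{jn}\Big[\sum_{i=1}^m \tfrac12\Phi(|\nabla u_i|^2)-\tilde H(u)\Big],\qquad j=1,\dots,n,
\end{equation*}
which is the relevant component of the stress--energy tensor of the functional \eqref{Eu=}; note that $-\xi_n$ is exactly the integrand in \eqref{hamil}. The key computation is that $\div\xi\equiv 0$ in $\mathbb R^n$: expanding $\sum_{j=1}^n\partial_j\big(\sum_i \Phi'(|\nabla u_i|^2)\partial_j u_i\,\partial_{x_n} u_i\big)$ and using \eqref{main} in the form $\div(\Phi'(|\nabla u_i|^2)\nabla u_i)=-H_i(u)$ produces the terms $-\sum_i H_i(u)\,\partial_{x_n}u_i$ and $\sum_i \Phi'(|\nabla u_i|^2)\nabla u_i\cdot\nabla(\partial_{x_n}u_i)=\tfrac12\sum_i\partial_{x_n}\Phi(|\nabla u_i|^2)$; on the other hand, since $\partial_i\tilde H=H_i$, one has $\partial_{x_n}\big[\sum_i\tfrac12\Phi(|\nabla u_i|^2)-\tilde H(u)\big]=\tfrac12\sum_i\partial_{x_n}\Phi(|\nabla u_i|^2)-\sum_i H_i(u)\partial_{x_n}u_i$, and subtracting the two gives $\div\xi=0$. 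This step is purely algebraic and routine once the right field has been written down.

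Next I would fix $a<b$ and $R>0$, set $B'_R:=\{x'\in\mathbb R^{n-1}:|x'|<R\}$ and $\mathcal C_R:=B'_R\times(a,b)$, and apply the divergence theorem to $\xi$ over $\mathcal C_R$. Since $\div\xi\equiv 0$, the total flux through $\partial\mathcal C_R$ vanishes: the top face $B'_R\times\{b\}$ contributes $\int_{B'_R}\xi_n(x',b)\,dx'$, the bottom face $B'_R\times\{a\}$ contributes $-\int_{B'_R}\xi_n(x',a)\,dx'$, and the lateral face $\partial B'_R\times(a,b)$ contributes $\int_a^b\!\!\int_{\partial B'_R}\sum_{j=1}^{n-1}\xi_j\,\frac{x_j}{|x'|}\,d\sigma\,dx_n$. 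Hence
\begin{equation*}
\int_{B'_R}(-\xi_n)(x',b)\,dx'-\int_{B'_R}(-\xi_n)(x',a)\,dx'=\int_a^b\!\!\int_{\partial B'_R}\sum_{j=1}^{n-1}\xi_j\,\frac{x_j}{|x'|}\,d\sigma\,dx_n,
\end{equation*}
and the right-hand side is, up to sign, precisely the boundary integral that the hypothesis requires to vanish as $R\to\infty$ along a sequence. Letting $R\to\infty$ along that sequence kills the right-hand side; combined with the assumption that $\int_{\mathbb R^{n-1}}(-\xi_n)(x',x_n)\,dx'$ is finite for at least one value of $x_n$, this forces $\int_{\mathbb R^{n-1}}(-\xi_n)(x',b)\,dx'=\int_{\mathbb R^{n-1}}(-\xi_n)(x',a)\,dx'$ for all $a,b$, which is \eqref{hamil} with $C$ the common value.

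The identity $\div\xi=0$ is not the difficulty; the main obstacle is the limiting step, namely justifying that the truncated slice-integrals actually converge to $\int_{\mathbb R^{n-1}}(-\xi_n)(\,\cdot\,,b)$ and $\int_{\mathbb R^{n-1}}(-\xi_n)(\,\cdot\,,a)$ and that finiteness of the slice-integral at one height propagates to every height. This is exactly what the two standing hypotheses — finiteness of the slice-integral at one level, and decay of the lateral flux along a sequence $R_k\to\infty$ — are designed to supply, and it is where one must be careful about interchanging the limit $R\to\infty$ with the $dx'$-integration.
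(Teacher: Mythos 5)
Your proposal is correct and is essentially the paper's own argument in repackaged form: the pointwise identity $\div \xi = 0$ is exactly the computation the paper performs when it reduces $\Gamma_R'(x_n)$ to the lateral boundary term $\sum_i \int_{\partial B_R} \Phi'(|\nabla u_i|^2)\, \partial_{\nu_{x'}} u_i\, \partial_{x_n} u_i$, and applying the divergence theorem over the cylinder $B_R'\times(a,b)$ is the same as integrating $\Gamma_R'$ in $x_n$, with the two standing hypotheses used in the limit $R\to\infty$ in exactly the same way. The only delicate point is the limiting step you already flag (convergence of the truncated slice integrals), which the paper treats at the same level of rigor, so there is no gap to repair.
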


\begin{proof} Suppose that $x=(x',x_n)\in\mathbb R^n$ and assume that $B_R(0)$ is a ball of radius $R$ in $\mathbb R^{n-1}$.  Define $\Gamma:\mathbb R\to \mathbb R$ as  
\begin{equation}\label{ham}
\Gamma_R(x_n):=\int_{B_R(0)} \left( \sum_{i=1}^{m} \left[ \frac{1}{2} \Phi(|\nabla u_i|^2) - \Phi'(|\nabla u_i|^2) |\partial_{x_n} u_i|^2 \right] - \tilde H(u)  \right)  dx'.
\end{equation}
Differentiating $\Gamma$ with respect to $x_n$ we get 
\begin{eqnarray}\label{}
\nonumber \Gamma'_R(x_n) &=&   \sum_{i=1}^{m} \int_{B_R(0)}  \{  \frac{1}{2} \partial_{x_n} \left[  \Phi(|\nabla u_i|^2) \right]  -  \partial_{x_n} \left[ \Phi'(|\nabla u_i|^2) |\partial_{x_n} u_i|^2 \right] 
\\&&\nonumber - H_i(u) \partial_{x_n} u_i
   \} dx'
\\&=:& \label{1gamma} 
 \sum_{i=1}^{m} \int_{B_R(0)}   \left\{ 
 \Gamma^1(x) + \Gamma^2 (x)+\Gamma^3(x) \right\} dx'. 
  \end{eqnarray}
In what follows we simplify the above three terms,  appeared in the right-hand side of (\ref{1gamma}).  Note that 
\begin{equation}\label{gammae}
\partial_{x_n} \left[  \Phi(|\nabla u_i|^2) \right]=2 \Phi'(|\nabla u_i|^2) \nabla u_i \cdot \nabla \partial_{x_n}u_{i}.
\end{equation}
Therefore, 
\begin{equation}\label{gamma1}
 \Gamma^1(x) = \Phi'(|\nabla u_i|^2) \nabla u_i \cdot \nabla \partial_{x_n}u_{i}.
\end{equation}
Similarly, 
\begin{eqnarray}\label{gamma2}
 \Gamma^2(x) &=&-  2 \Phi''(|\nabla u_i|^2) \nabla u_i \cdot \nabla \partial_{x_n}u_{i} |\partial_{x_n}u_{i}|^2 \\&&\nonumber - 2 \Phi'(|\nabla u_i|^2) \partial_{x_n}u_{i} \partial^2_{x_nx_n}u_{i}.
\end{eqnarray}
 We now apply (\ref{main}) to  simplify $\Gamma^3$, 
\begin{eqnarray}\label{gamma3}
\nonumber \Gamma^3(x) = \div(\Phi'(|\nabla u_i|^2) \nabla u_i )  \partial_{x_n} u_i &=&   \div_{x'} \left(\Phi'(|\nabla u_i|^2) \nabla_{x'} u_i \right)  \partial_{x_n} u_i  
 \\&&\nonumber  + 2\Phi'' \left(|\nabla u_i|^2\right)   \nabla u_i \cdot \nabla \partial_{x_n}u_{i} |\partial_{x_n}u_{i}|^2 
\\&& + \Phi'\left(|\nabla u_i|^2\right) \partial_{x_n}u_{i} \partial^2_{x_n x_n}u_{i} . 
\end{eqnarray}
Adding (\ref{gamma1}),  (\ref{gamma2}) and (\ref{gamma3}) we get 
\begin{eqnarray}\label{gamma123}
 \Gamma^1(x) + \Gamma^2 (x)+\Gamma^3(x)& =&  \div_{x'} \left(\Phi'(|\nabla u_i|^2) \nabla_{x'} u_i \right)  \partial_{x_n} u_i \\&&\nonumber  + \Phi'\left(|\nabla u_i|^2\right) \partial_{x_n}u_{i} \partial^2_{x_n x_n}u_{i} . 
 \end{eqnarray}
Substituting (\ref{gamma123}) in (\ref{1gamma}) and applying the divergence theorem we obtain
\begin{eqnarray}\label{gammapp}
\Gamma'_R(x_n) &=& \sum_{i=1}^{m} \int_{\partial B_R(0)} \Phi'\left(|\nabla u_i|^2\right) \partial_{\nu_{x'}}u_{i} \partial_{x_n}u_{i}. 
 \end{eqnarray}
Suppose that the integral in (\ref{hamil}) is finite when $x_n=0$. Then, 
\begin{equation}\label{integ}
\Gamma_R(x_n) -\Gamma_R(0) = \sum_{i=1}^{m}  \int_0^{x_n} \int_{\partial B_R(0)} \Phi'\left(|\nabla u_i|^2\right) \partial_{\nu_{x'}}u_{i} \partial_{x_n}u_{i} . 
 \end{equation}
Taking the limit of the above when $R\to \infty$ finishes the proof. 

\end{proof}

When $\Phi$ is the identity function, the system of equations (\ref{main}) is a semilinear system of the following form  $$ - \Delta u_i  =   H_i(u)  \quad  \text{in} \ \  \mathbb{R}^n.$$
Therefore, Theorem \ref{hamthm} implies that the following  Hamiltonian identity  holds,
\begin{equation}\label{hamili}
\int_{\mathbb R^{n-1}} \left( \sum_{i=1}^{m} \frac{1}{2}\left[  |\nabla_{x'} u_i|^2) - |\partial_{x_n} u_i|^2 \right] - \tilde H(u)  \right) d x'\equiv C.
\end{equation}
Note that (\ref{hamili}) is given by Gui in \cite{gui}.  Here we have Hamiltonian identities for the mean curvature system as well as  the $p$-Laplacian  system. 

\begin{cor} Suppose that assumptions of Theorem \ref{hamthm} hold. 
\begin{enumerate}
\item[(i)]  Let $\Phi(s)=2(\sqrt{s+1}-1)$.  Then  (\ref{main}) reads
$$-   \div \left(\frac{ \nabla u_i}{ \sqrt{1+|\nabla u_i|^2}  }\right )   =   H_i(u)  \quad  \text{in} \ \  \mathbb{R}^n.$$
For any $x_n\in\mathbb R$, this  Hamiltonian identity holds, 
\begin{equation}\label{hamilc}
\int_{\mathbb R^{n-1}} \left( \sum_{i=1}^{m} \left[ \frac{1+|\nabla _{x'} u_i|^2 -\sqrt{1+|\nabla u_i|^2} }{\sqrt{1+|\nabla u_i|^2}}   \right] - \tilde H(u)  \right) d x'\equiv C.
\end{equation}
\item[(ii)] Let $\Phi(s)=\frac{2}{p} s^{\frac{p}{2}}$.    Then  (\ref{main}) reads 
$$-   \div \left( |\nabla u_i|^{p-2} \nabla u_i \right )   =   H_i(u)  \quad  \text{in} \ \  \mathbb{R}^n.$$
For any $x_n\in\mathbb R$, this  Hamiltonian identity holds, 
\begin{equation}\label{hamilp}
\int_{\mathbb R^{n-1}} \left( \sum_{i=1}^{m} |\nabla  u_i|^{p-2}\left[ \frac{1}{p} |\nabla _{x'} u_i|^2 -\frac{p-1}{p} |\partial_{x_n} u_i|^2 \right] - \tilde H(u)  \right) d x'\equiv C.
\end{equation}
\end{enumerate}
where $C$ is a constant. 
\end{cor}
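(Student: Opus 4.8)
The plan is to derive both identities by specializing the general Hamiltonian identity (\ref{hamil}) of Theorem \ref{hamthm} to the two prescribed choices of $\Phi$, and then simplifying the integrand with the orthogonal decomposition $|\nabla u_i|^2 = |\nabla_{x'} u_i|^2 + |\partial_{x_n} u_i|^2$. No new analytic input is required: one only has to check that each $\Phi$ satisfies the standing sign hypotheses ($\Phi, \Phi'$ and $\Phi' + 2s\Phi''$ positive on $\mathbb R^+$, $\Phi(0)=0$) so that Theorem \ref{hamthm} applies, confirm that (\ref{main}) becomes the advertised operator, and then carry out an algebraic reduction.

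For part (i), with $\Phi(s) = 2(\sqrt{s+1}-1)$ one computes $\Phi'(s) = (s+1)^{-1/2}$ and $\Phi''(s) = -\tfrac12 (s+1)^{-3/2}$, so $\Phi, \Phi' > 0$, $\Phi(0) = 0$, and $\Phi'(s) + 2s\Phi''(s) = (s+1)^{-3/2} > 0$ for $s > 0$; hence Theorem \ref{hamthm} applies, and since $\Phi'(|\nabla u_i|^2)\nabla u_i = \nabla u_i/\sqrt{1+|\nabla u_i|^2}$ the system (\ref{main}) is precisely the prescribed mean curvature system in the statement. Substituting into (\ref{hamil}) yields the integrand $\sum_{i=1}^m \bigl[ \sqrt{1+|\nabla u_i|^2} - 1 - |\partial_{x_n} u_i|^2 / \sqrt{1+|\nabla u_i|^2} \bigr] - \tilde H(u)$; placing the $i$-th summand over the common denominator $\sqrt{1+|\nabla u_i|^2}$ and using $1 + |\nabla u_i|^2 - |\partial_{x_n} u_i|^2 = 1 + |\nabla_{x'} u_i|^2$ rewrites it as $(1 + |\nabla_{x'} u_i|^2 - \sqrt{1+|\nabla u_i|^2})/\sqrt{1+|\nabla u_i|^2}$, which is exactly (\ref{hamilc}).

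For part (ii), with $\Phi(s) = \tfrac{2}{p} s^{p/2}$ one has $\Phi'(s) = s^{p/2-1}$ and $\Phi'(s) + 2s\Phi''(s) = (p-1)s^{p/2-1} > 0$ for $p > 1$, so Theorem \ref{hamthm} again applies, and $\Phi'(|\nabla u_i|^2)\nabla u_i = |\nabla u_i|^{p-2}\nabla u_i$ identifies (\ref{main}) as the $p$-Laplacian system. Then $\tfrac12 \Phi(|\nabla u_i|^2) - \Phi'(|\nabla u_i|^2)|\partial_{x_n} u_i|^2 = \tfrac{1}{p}|\nabla u_i|^p - |\nabla u_i|^{p-2}|\partial_{x_n} u_i|^2 = |\nabla u_i|^{p-2}\bigl( \tfrac{1}{p}|\nabla u_i|^2 - |\partial_{x_n} u_i|^2 \bigr)$, and substituting $|\nabla u_i|^2 = |\nabla_{x'} u_i|^2 + |\partial_{x_n} u_i|^2$ turns the parenthesis into $\tfrac{1}{p}|\nabla_{x'} u_i|^2 - \tfrac{p-1}{p}|\partial_{x_n} u_i|^2$, which is precisely (\ref{hamilp}). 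Because the hypotheses of Theorem \ref{hamthm} are assumed outright in the Corollary, there is no real obstacle here; the only point demanding a little care is the bookkeeping of the orthogonal splitting of $\nabla u_i$ and the routine verification of the structural conditions on $\Phi$, after which both identities drop out immediately.
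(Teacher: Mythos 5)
Your proposal is correct and matches the paper's intent: the corollary is stated there as an immediate specialization of Theorem \ref{hamthm}, and your computations of $\Phi'$, the verification of the structural hypotheses on $\Phi$, and the algebraic reduction using $|\nabla u_i|^2=|\nabla_{x'}u_i|^2+|\partial_{x_n}u_i|^2$ are exactly the steps needed. Nothing is missing.
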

Note that for a specific nonlinearity of the form $H_i(u)=u_i\left(1-\sum_{i=1}^m u^2_i\right)$, the original  system of equations (\ref{main}) is    
$$    -\div(\Phi'(|\nabla u_i|^2) \nabla u_i )   =  u_i\left(1-\sum_{i=1}^m u^2_i\right)  \quad  \text{in} \ \  \mathbb{R}^n . 
$$
This is a quasilinear Ginzburg-Landau system  where $u=(u_i)_{i=1}^m$ for $u_i:\mathbb R^n\to \mathbb R$.  For the semilinear case and $m=2$,  see \cite{bmr,f3}.  Since  $\tilde H=-\frac{1}{4} (1-\sum_{i=1}^m u^2_i)^2$ is an antiderivative of $H$, the following Hamiltonian identity holds, as long as conditions of Theorem \ref{hamthm} are satisfied, for any $x_n\in\mathbb R$ 
 \begin{equation}\label{hamilgl}
\int_{\mathbb R^{n-1}} \left\{ \sum_{i=1}^{m} \left[ \frac{1}{2} \Phi(|\nabla u_i|^2) - \Phi'(|\nabla u_i|^2) |\partial_{x_n} u_i|^2 \right] +\frac{1}{4} \left(1-\sum_{i=1}^m u^2_i\right)^2  \right\} d x'\equiv C. 
\end{equation}
For the rest of this section we study monotonicity formulae for solutions of system (\ref{main}).  Consider the following function $I_\alpha(r)$ for any $r>0$ 
\begin{equation}\label{Ir} 
I_\alpha(r):=\frac{1}{r^{n-\alpha}} \int_{B_r} \left[ \sum_{i=1}^{m} \Phi(|\nabla u_i|^2) - 2\tilde H(u)\right]. 
\end{equation}
For the case of scalar equation, that is when $m=1$, the following monotonicity formula holds. Note that this is a direct consequence of Theorem A. 
\begin{thm**}\cite{cgs}
Suppose that $m=1$ and $u$ is a solution of (\ref{mainc}).  In addition,  suppose that assumptions of Theorem A  hold. Then the functional $I_\alpha(r)$  when $\alpha\ge 1$ is a monotone nondecreasing  function of  $r$. 
\end{thm**}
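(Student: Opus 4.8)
The plan is to deduce the monotonicity of $I_\alpha$ directly from the pointwise estimate of Theorem~A together with a Poho\-zaev-type (stress–energy) identity, the point being that the pointwise estimate is \emph{exactly} what is needed to absorb the bad bulk term produced by Poho\-zaev. First I would fix notation: write $F$ for the antiderivative of $f$ (so $F\ge 0$ by hypothesis), let $e(x):=\tfrac12\Phi(|\nabla u|^2)+F(u)$ be the energy density attached to (\ref{mainc}), and observe that in the scalar case of (\ref{mainc}) the integrand of $I_\alpha$ is exactly $2e$, so that $I_\alpha(r)=r^{\alpha-n}J(r)$ with $J(r):=\int_{B_r}2e\,dx$. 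The key preliminary observation is that the pointwise inequality (\ref{cpoint}) of Theorem~A, $2\Phi'(|\nabla u|^2)|\nabla u|^2-\Phi(|\nabla u|^2)\le 2F(u)$, is equivalent to
\[
\Phi'(|\nabla u|^2)\,|\nabla u|^2\ \le\ e(x)\qquad\text{for every }x ,
\]
and hence, integrating over $B_r$, to $2\int_{B_r}\Phi'(|\nabla u|^2)|\nabla u|^2\,dx\le J(r)$.

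Next I would establish the Poho\-zaev identity. The stress–energy tensor $T_{ij}:=\Phi'(|\nabla u|^2)\,\partial_i u\,\partial_j u-e(x)\,\delta_{ij}$ is divergence free along solutions of (\ref{mainc}) --- a one-line computation using the equation, in the same spirit as the manipulations in the proof of Theorem~\ref{hamthm}; equivalently this identity is what one obtains by testing (\ref{mainc}) against $x\cdot\nabla u$. Contracting $\sum_j\partial_j T_{ij}=0$ with $x_i$, integrating over $B_r$ and using $x=r\nu$ on $\partial B_r$ yields
\[
\int_{B_r}\bigl[\Phi'(|\nabla u|^2)|\nabla u|^2-n\,e\bigr]\,dx\ =\ r\int_{\partial B_r}\bigl[\Phi'(|\nabla u|^2)(\partial_\nu u)^2-e\bigr]\,dS .
\]
Since $J'(r)=\int_{\partial B_r}2e\,dS$ for a.e.\ $r$ by the coarea formula, this rearranges into
\[
rJ'(r)=nJ(r)-2\int_{B_r}\Phi'(|\nabla u|^2)|\nabla u|^2\,dx+2r\int_{\partial B_r}\Phi'(|\nabla u|^2)(\partial_\nu u)^2\,dS .
\]

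Finally I would differentiate $I_\alpha(r)=r^{\alpha-n}J(r)$ and substitute the last identity to get
\[
I_\alpha'(r)=r^{\alpha-n-1}\Bigl[\alpha J(r)-2\int_{B_r}\Phi'(|\nabla u|^2)|\nabla u|^2\,dx+2r\int_{\partial B_r}\Phi'(|\nabla u|^2)(\partial_\nu u)^2\,dS\Bigr].
\]
Using $2\int_{B_r}\Phi'(|\nabla u|^2)|\nabla u|^2\,dx\le J(r)$ from the first step, the bracket is bounded below by $(\alpha-1)J(r)+2r\int_{\partial B_r}\Phi'(|\nabla u|^2)(\partial_\nu u)^2\,dS$, which is nonnegative as soon as $\alpha\ge1$: indeed $J(r)\ge0$ because $\Phi\ge0$ (recall $\Phi(0)=0$ and $\Phi'>0$) and $F\ge0$, while the surface integral is nonnegative since $\Phi'>0$. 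Hence $I_\alpha'(r)\ge0$ and $I_\alpha$ is nondecreasing. The hard part is not this algebra but making it rigorous under Condition~(A), where $u$ is merely a $W^{1,p}_{loc}$ solution: the divergence-free property of $T$ and the Poho\-zaev identity then have to be justified through the regularization scheme of \cite{cgs} (smoothing $\Phi$ and $f$, establishing uniform gradient estimates, and passing to the limit), whereas under Condition~(B) with $|\nabla u|\in L^\infty$ one has $u\in C^2$ and the computation is direct; one should also keep in mind that $J$ is only absolutely continuous, with $J'(r)=\int_{\partial B_r}2e\,dS$ for a.e.\ $r$, which is nonetheless enough to integrate the differential inequality and conclude.
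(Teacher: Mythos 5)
Your proposal is correct and follows essentially the route the paper intends: Theorem B is quoted from \cite{cgs} as a ``direct consequence of Theorem A,'' and your argument—the scalar Pohozaev/stress--energy identity (the $m=1$ case of Lemma \ref{poho}), differentiation of $I_\alpha(r)=r^{\alpha-n}J(r)$, and then absorption of the bulk term $2\int_{B_r}\Phi'(|\nabla u|^2)|\nabla u|^2$ via the pointwise bound (\ref{cpoint})—is exactly that consequence, mirroring the proof of Theorem \ref{monoh} with the Theorem A estimate playing the role of the condition $\alpha\ge\alpha^*$. You also correctly flag that under Condition (A) the identity must be justified through the regularization scheme of \cite{cgs}, which is where the only genuine technical work lies.
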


For the case of system of equations, that is when $m\ge2$, we  provide a weaker version of the monotonicity formula provided in Theorem B,  under certain lower bounds on $\alpha$ depending on $\Phi$.        

\begin{thm}\label{monoh}
Let $u=(u_i)_{i=1}^m$ be a solution of (\ref{main}) when $m\ge2$ and $\tilde H(u) \le 0$.   Suppose that there exists a constant $\alpha$ such that 
\begin{equation}\label{alphaphi}
\alpha \ge \alpha^*:=\inf_{s>0}\left\{ \frac{2 s \Phi'(s)}{\Phi(s)}\right\}.
 \end{equation}
Then,  the functional $I_\alpha(r)$ is a monotone nondecreasing function of $r$. In particular,
\begin{equation}\label{mmI}
I_\alpha'(r)  \ge \frac{2}{r^{n-\alpha}} \int_{\partial B_r}  \sum_{i=1}^{m} \Phi'(|\nabla u_i|^2) (\partial_r u_i)^2 
-\frac{2\alpha}{r^{n-\alpha+1}} \int_{B_r}    \tilde H(u).
\end{equation}
\end{thm}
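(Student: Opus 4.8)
The plan is to differentiate $I_\alpha$ directly and reduce everything to a Pohozaev (Rellich-type) identity on balls, following the scalar strategy of \cite{cgs} but carrying the system through the potential relation $\sum_{i=1}^m H_i(u)\nabla u_i=\nabla\tilde H(u)$. Write $E(r):=\int_{B_r}\big[\sum_{i=1}^m\Phi(|\nabla u_i|^2)-2\tilde H(u)\big]$, so that $I_\alpha(r)=r^{\alpha-n}E(r)$ and, by the coarea formula,
\begin{equation*}
I_\alpha'(r)=r^{\alpha-n-1}\big[\,rE'(r)-(n-\alpha)E(r)\,\big],\qquad E'(r)=\int_{\partial B_r}\Big[\sum_{i=1}^m\Phi(|\nabla u_i|^2)-2\tilde H(u)\Big].
\end{equation*}
Hence it suffices to identify $rE'(r)-(n-\alpha)E(r)$ with the right-hand side of (\ref{mmI}) and to check that the latter is nonnegative.

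The key input is the following identity, obtained by multiplying the $i$-th equation of (\ref{main}) by $x\cdot\nabla u_i$ and integrating over $B_r$. Integrating the divergence term by parts gives a boundary contribution $-r\int_{\partial B_r}\Phi'(|\nabla u_i|^2)(\partial_r u_i)^2$ (using $x\cdot\nabla u_i=r\,\partial_r u_i$ and $\nu=x/r$ on $\partial B_r$) together with the bulk term $\int_{B_r}\Phi'(|\nabla u_i|^2)\big[\,|\nabla u_i|^2+\tfrac12\,x\cdot\nabla|\nabla u_i|^2\,\big]$. Here the algebraic point is that $\Phi'(|\nabla u_i|^2)\nabla|\nabla u_i|^2=\nabla\Phi(|\nabla u_i|^2)$, so the last summand equals $\tfrac12\int_{B_r}x\cdot\nabla\Phi(|\nabla u_i|^2)=\tfrac r2\int_{\partial B_r}\Phi(|\nabla u_i|^2)-\tfrac n2\int_{B_r}\Phi(|\nabla u_i|^2)$ after one more integration by parts. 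Summing over $i$ and using $\sum_i H_i(u)\nabla u_i=\nabla\tilde H(u)$, the right-hand side becomes $\int_{B_r}x\cdot\nabla\tilde H(u)=r\int_{\partial B_r}\tilde H(u)-n\int_{B_r}\tilde H(u)$.

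Collecting these terms, the summed Pohozaev identity rearranges so that its left-hand side is exactly $rE'(r)$; subtracting $(n-\alpha)E(r)$ and regrouping the bulk integrals then yields
\begin{equation*}
rE'(r)-(n-\alpha)E(r)=2r\int_{\partial B_r}\sum_{i=1}^m\Phi'(|\nabla u_i|^2)(\partial_r u_i)^2+\int_{B_r}\sum_{i=1}^m h(|\nabla u_i|^2)-2\alpha\int_{B_r}\tilde H(u),
\end{equation*}
where $h(s):=\alpha\Phi(s)-2s\Phi'(s)$ is the auxiliary function from the discussion after (\ref{intalphaphi}). Dividing by $r^{n-\alpha+1}$ produces the exact formula for $I_\alpha'(r)$. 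Since the choice (\ref{alphaphi}) of $\alpha$ is precisely what makes $h(s)\ge 0$ on $(0,\infty)$ (equivalently $\alpha\ge 2s\Phi'(s)/\Phi(s)$, using $\Phi>0$, and $h(0)=0$), the bulk term $\int_{B_r}\sum_i h(|\nabla u_i|^2)$ is nonnegative, which gives the lower bound (\ref{mmI}). Finally, in (\ref{mmI}) the boundary term is nonnegative because $\Phi'>0$, and $-2\alpha r^{\alpha-n-1}\int_{B_r}\tilde H(u)\ge 0$ because $\tilde H(u)\le 0$; hence $I_\alpha'(r)\ge 0$ and $I_\alpha$ is nondecreasing.

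I expect the bookkeeping in the Pohozaev identity — keeping track of signs so that the boundary and bulk terms recombine into $rE'(r)$ — to be the bulk of the work, though it is entirely routine. The one genuine caveat is regularity: the computation uses $D^2u_i$, which is harmless for the Laplacian, the prescribed mean curvature operator, and the $p$-Laplacian with $\epsilon>0$, but for the degenerate $p$-Laplacian one should either stay within the stated classical framework or perform the identity on the regularized operators $(\epsilon+|\nabla u_i|^2)^{(p-2)/2}$ and pass to the limit. Note that, in contrast to Theorem B, no structural hypothesis such as Condition (A) or (B) is invoked; only the existence of the potential $\tilde H$ (i.e.\ symmetry of the system) together with $\tilde H\le 0$ is used.
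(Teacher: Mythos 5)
Your proposal is correct and follows essentially the same route as the paper: the paper's proof also differentiates $I_\alpha$, invokes the Pohozaev-type identity of Lemma \ref{poho} (obtained exactly as you do, by multiplying the $i$-th equation by $x\cdot\nabla u_i$ and integrating by parts, then summing with $\sum_i H_i(u)\nabla u_i=\nabla\tilde H(u)$), and arrives at the identical exact formula $I_\alpha'(r)r^{n-\alpha+1}=2r\int_{\partial B_r}\sum_i\Phi'(|\nabla u_i|^2)(\partial_r u_i)^2+\int_{B_r}\sum_i\bigl[\alpha\Phi(|\nabla u_i|^2)-2\Phi'(|\nabla u_i|^2)|\nabla u_i|^2\bigr]-2\alpha\int_{B_r}\tilde H(u)$ before concluding from $h\ge 0$, $\Phi'>0$ and $\tilde H\le 0$. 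The one caveat, which you share with the paper rather than introduce, is that pointwise nonnegativity of $h(s)=\alpha\Phi(s)-2s\Phi'(s)$ really requires $\alpha\ge 2s\Phi'(s)/\Phi(s)$ for \emph{all} $s>0$ (a supremum condition, as in the examples of Corollary \ref{corphis}, where the ratio is constant or has supremum $2$), not merely $\alpha$ above the infimum in (\ref{alphaphi}).
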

Unlike Theorem B, conditions (A) and (B) do not appear in assumptions of the above theorem for the case of system of equations.  This implies that Theorem \ref{monoh} is valid for a larger class of nonlinearities $\Phi$ compared to Theorem B.  However,  as mentioned in Section \ref{intro}, the constant $\alpha^*$ must be greater than one, due to assumptions on $\Phi$.   For the sake of convenience of readers we clarify this here as well. Consider the auxiliary function $h(s):=-2\Phi'(s) s +\alpha\Phi( s)$ regarding terms appeared in  (\ref{alphaphi}).  Note that from assumptions on $\Phi$, i.e. $2s\Phi''(s)+\Phi'(s)>0$ when $s>0$ and $h(0)=\alpha \Phi(0)=0$ one can see that  $h'(s)=-[2s\Phi''(s)+\Phi'(s)]+(\alpha-1) \Phi'(s)$ is negative  when $\alpha \le  1$.    We now compute $\alpha^*$, provided in (\ref{alphaphi}),  for various choices of $\Phi$. 

\begin{cor}\label{corphis} 
Suppose that $m\ge 2$ and $\tilde H \le 0$. Consider the following particular functions $\Phi$. 
\begin{enumerate}
\item[(i)] Let $\Phi(s)=s$ and $u=(u_i)_{i=1}^m$ be a solution of the semilinear system of equations
\begin{equation} \label{mainpct}
  - \Delta u_i  =   H_i(u)  \quad  \text{in} \ \  \mathbb{R}^n . 
  \end{equation} 
  Then for all $\alpha \ge \alpha^*=2$, the function $I_\alpha(r)$ is monotone nondecreasing in $r$.   
  
\item[(ii)] Let $\Phi(s)=2(\sqrt{1+s} -1)$ and $u=(u_i)_{i=1}^m$ be a solution  of the mean curvature system of equations 
\begin{equation} \label{mainpc}
-   \div \left(\frac{ \nabla u_i}{ \sqrt{1+|\nabla u_i|^2}  }\right )   =   H_i(u)  \quad  \text{in} \ \  \mathbb{R}^n .
  \end{equation} 
    Then for all $\alpha \ge \alpha^*= 2$, the function $I_\alpha(r)$ is monotone nondecreasing in $r$.  
\item[(iii)] Let $\Phi(s)= \frac{2}{p} s^{\frac{p}{2}}$ and $u=(u_i)_{i=1}^m$ be a solution  of $p$-Laplacian system of equations 
\begin{equation} \label{mainpc}
 -  \div \left( |\nabla u_i|^{p-2} \nabla u_i  \right )   =   H_i(u)  \quad  \text{in} \ \  \mathbb{R}^n .
  \end{equation} 
    Then for all $\alpha \ge \alpha^*=p$, the function $I_\alpha(r)$ is monotone nondecreasing in $r$.  
\end{enumerate}
\end{cor}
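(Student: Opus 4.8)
The plan is to read off Corollary~\ref{corphis} from Theorem~\ref{monoh}: for each of the three model functions $\Phi$ I only need to (a) check the standing hypotheses on $\Phi$ used throughout the paper, namely positivity of $\Phi$, $\Phi'$ and $\Phi'(s)+2s\Phi''(s)$ on $(0,\infty)$ together with $\Phi(0)=0$; (b) confirm that $-\div(\Phi'(|\nabla u_i|^2)\nabla u_i)=H_i(u)$ is the operator named in the statement; and (c) evaluate the threshold $\alpha^*$ through the quotient $2s\Phi'(s)/\Phi(s)$, which controls the sign of $h(s)=\alpha\Phi(s)-2s\Phi'(s)$ on which the monotonicity of $I_\alpha$ rests. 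Since $m\ge2$ and $\tilde H\le0$ are already part of the corollary's hypotheses, and since Theorem~\ref{monoh} needs neither condition~(A) nor~(B), once (a)--(c) are in place Theorem~\ref{monoh} immediately yields both the monotonicity of $I_\alpha$ and the bound (\ref{mmI}) for all $\alpha\ge\alpha^*$.

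Two of the three cases are immediate because $2s\Phi'(s)/\Phi(s)$ is constant. For $\Phi(s)=s$: $\Phi'\equiv1$, $\Phi''\equiv0$, so $\Phi'+2s\Phi''=1>0$ and $\Phi(0)=0$; the operator is $-\Delta$; and $2s\Phi'(s)/\Phi(s)\equiv2$, hence $\alpha^*=2$. For $\Phi(s)=\frac{2}{p}s^{p/2}$: $\Phi'(s)=s^{p/2-1}$, so $\Phi'+2s\Phi''=(p-1)s^{p/2-1}>0$ by $p>1$, and $\Phi(0)=0$; the operator is the $p$-Laplacian $-\div(|\nabla u_i|^{p-2}\nabla u_i)$; and $2s\Phi'(s)/\Phi(s)\equiv p$, hence $\alpha^*=p$. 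Nothing is optimized in either case.

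The prescribed mean curvature case, $\Phi(s)=2(\sqrt{1+s}-1)$, needs a short computation. Here $\Phi'(s)=(1+s)^{-1/2}$ and $\Phi''(s)=-\frac{1}{2}(1+s)^{-3/2}$, so $\Phi'+2s\Phi''=(1+s)^{-3/2}>0$ and $\Phi(0)=0$, and the operator is $-\div\big(\nabla u_i/\sqrt{1+|\nabla u_i|^2}\big)$. With the substitution $t=\sqrt{1+s}\in(1,\infty)$,
\[
\frac{2s\Phi'(s)}{\Phi(s)}=\frac{s}{\sqrt{1+s}\,(\sqrt{1+s}-1)}=\frac{t^2-1}{t(t-1)}=1+\frac{1}{t}=1+\frac{1}{\sqrt{1+s}},
\]
a strictly decreasing function of $s$ with values in $(1,2)$ and supremum $2$ (the limit as $s\downarrow0$); hence $h(s)=\alpha\Phi(s)-2s\Phi'(s)\ge0$ for all $s>0$ exactly when $\alpha\ge2$, so $\alpha^*=2$. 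Plugging $\alpha^*=2,\,2,\,p$ into Theorem~\ref{monoh} completes the three parts.

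Since this is a corollary in the literal sense, I do not anticipate a real obstacle. The only point deserving a remark is that in the mean curvature case the supremum $2$ of $2s\Phi'(s)/\Phi(s)$ is approached but not attained, so one should check that $\alpha=\alpha^*=2$ is admissible and not merely $\alpha>2$: for $\alpha=2$ one has $\alpha\Phi(s)-2s\Phi'(s)>0$ strictly for every $s>0$, which is precisely the inequality used in the proof of Theorem~\ref{monoh}, so the monotonicity persists at the endpoint. (A minor bookkeeping note: for $1<p<2$ the function $\frac{2}{p}s^{p/2}$ is not $C^2$ up to $s=0$, but all the conditions actually invoked hold on $(0,\infty)$, so this is immaterial.)
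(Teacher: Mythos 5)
Your proposal is correct, and it follows the only route available: read the corollary off Theorem \ref{monoh} by checking the standing hypotheses on $\Phi$ and computing the threshold for $h(s)=\alpha\Phi(s)-2s\Phi'(s)\ge 0$. Cases (i) and (iii) are exactly as in the paper, since $2s\Phi'(s)/\Phi(s)$ is identically $2$, resp.\ $p$. Where you add genuine value is case (ii): there the quotient equals $1+\frac{1}{\sqrt{1+s}}$, which is non-constant, so its \emph{infimum} over $s>0$ is $1$ while its \emph{supremum} (the limit as $s\downarrow 0$) is $2$; what the proof of Theorem \ref{monoh} actually requires is $\alpha\Phi(s)-2s\Phi'(s)\ge 0$ for every $s>0$, i.e.\ $\alpha\ge\sup_{s>0} 2s\Phi'(s)/\Phi(s)$, so the value $\alpha^*=2$ stated in the corollary is this supremum and not the infimum in \eqref{alphaphi} read literally. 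Your direct verification, $2\Phi(s)-2s\Phi'(s)=2\bigl(\sqrt{1+s}-1\bigr)^2/\sqrt{1+s}\ge 0$, both settles the endpoint $\alpha=2$ (even though the supremum is not attained) and supplies the pointwise inequality that the monotonicity argument needs; this is a more careful treatment than plugging the mean curvature $\Phi$ into the displayed formula for $\alpha^*$, and it is the reading under which the corollary as stated is proved. Your side remarks (that conditions (A)/(B) are not needed for Theorem \ref{monoh}, and that the lack of $C^2$ regularity of $\frac{2}{p}s^{p/2}$ at $s=0$ for $1<p<2$ is immaterial since all hypotheses are imposed on $s>0$) are also accurate.
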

To provide a proof for Theorem \ref{monoh} we present a few technical estimates. We follow a classical argument regarding Pohozaev and Rellich type identities \cite{poh} to prove the following identity on a ball of radius $r$.   
\begin{lemma}\label{poho} 
Suppose that $u=(u_i)$ is a solution of (\ref{main}) then 
  \begin{eqnarray*}\label{}
-n \int_{B_r} \sum_{i=1}^{m} \Phi( |\nabla u_i|^2 ) &=& 2 r \int_{\partial B_r} \sum_{i=1}^{m} \Phi'(|\nabla u_i|^2) (\partial_r u_i)^2 -r \int_{\partial B_r} \sum_{i=1}^{m} \Phi(|\nabla u_i|^2) \\&& - 2 \int_{B_r} \sum_{i=1}^{m} \Phi'(|\nabla u_i|^2) |\nabla u_i|^2 - 2n \int_{B_r} H(u) +2 r \int_{B_r} H(u) . 
  \end{eqnarray*}
\end{lemma}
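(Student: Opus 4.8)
The plan is to derive the identity by testing the system with the standard Pohozaev vector field $x\cdot\nabla u_i$ on the ball $B_r$, integrating by parts, and collecting the boundary and interior terms. Concretely, I would start from the equation $-\div(\Phi'(|\nabla u_i|^2)\nabla u_i)=H_i(u)$, multiply by $x\cdot\nabla u_i$, sum over $i$, and integrate over $B_r$. The right-hand side contributes $\int_{B_r}\sum_i H_i(u)\,(x\cdot\nabla u_i)=\int_{B_r} x\cdot\nabla\big(\tilde H(u)\big)$, which by the divergence theorem equals $r\int_{\partial B_r}\tilde H(u)-n\int_{B_r}\tilde H(u)$; however, since the statement is written in terms of $H(u)$ rather than $\tilde H(u)$, I would instead keep $\nabla\tilde H$ grouped and note that the stated form uses $\int_{B_r} H(u)$ as shorthand — in any case the term is handled by one application of $\div(x\,g)=n g+x\cdot\nabla g$.

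The left-hand side is the main computation. Using $\div(\Phi'(|\nabla u_i|^2)\nabla u_i\,(x\cdot\nabla u_i)) = \div(\Phi'\nabla u_i)(x\cdot\nabla u_i)+\Phi'\nabla u_i\cdot\nabla(x\cdot\nabla u_i)$, I would move the divergence term to a boundary integral over $\partial B_r$ (where $x\cdot\nabla u_i = r\,\partial_r u_i$ and the normal derivative of $u_i$ is $\partial_r u_i$, giving the term $2r\int_{\partial B_r}\sum_i\Phi'(|\nabla u_i|^2)(\partial_r u_i)^2$ after accounting for the factor coming from the test multiplier). The remaining bulk term $\int_{B_r}\sum_i\Phi'(|\nabla u_i|^2)\,\nabla u_i\cdot\nabla(x\cdot\nabla u_i)$ is simplified via $\nabla(x\cdot\nabla u_i)=\nabla u_i + (x\cdot\nabla)\nabla u_i$, so it splits into $\int_{B_r}\sum_i\Phi'(|\nabla u_i|^2)|\nabla u_i|^2$ plus $\tfrac12\int_{B_r}\sum_i\Phi'(|\nabla u_i|^2)\,x\cdot\nabla(|\nabla u_i|^2)$. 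For the last piece I would use the chain rule $x\cdot\nabla\big(\Phi(|\nabla u_i|^2)\big)=\Phi'(|\nabla u_i|^2)\,x\cdot\nabla(|\nabla u_i|^2)$ and then $\div(x\,\Phi(|\nabla u_i|^2))=n\Phi(|\nabla u_i|^2)+x\cdot\nabla\Phi(|\nabla u_i|^2)$, converting it into the boundary term $r\int_{\partial B_r}\sum_i\Phi(|\nabla u_i|^2)$ minus $n\int_{B_r}\sum_i\Phi(|\nabla u_i|^2)$. Assembling all pieces and rearranging yields exactly the claimed identity.

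The step I expect to be the main bookkeeping obstacle is tracking the factors of $2$ and the signs when the mixed second-derivative term $(x\cdot\nabla)\nabla u_i$ is rewritten as a total divergence: one must be careful that $\nabla u_i\cdot(x\cdot\nabla)\nabla u_i = \tfrac12\, x\cdot\nabla(|\nabla u_i|^2)$ and that this combines correctly with the boundary contribution, since the coefficient $2$ in front of $r\int_{\partial B_r}\sum_i\Phi'(\partial_r u_i)^2$ comes from summing the boundary term of the divergence identity with the boundary term produced by integrating $\tfrac12\Phi' x\cdot\nabla(|\nabla u_i|^2)$ by parts — but only after using $|\nabla u_i|^2=(\partial_r u_i)^2$ is \emph{not} valid on the ball, so instead the extra $\Phi'(\partial_r u_i)^2$ on $\partial B_r$ must come solely from the Pohozaev boundary term, while the $\Phi(|\nabla u_i|^2)$ boundary term comes from the chain-rule piece; verifying these do not clash is the crux. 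A minor additional point is justifying the integrations by parts under the regularity assumed (classical solutions), which is immediate, and noting that, as written, the identity absorbs the $\partial_r u_i$ tangential components into $|\nabla u_i|^2$ only in the interior integrals, not on the sphere.
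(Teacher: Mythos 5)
Your proposal is correct and follows essentially the same route as the paper: multiply the $i$-th equation by the Pohozaev multiplier $x\cdot\nabla u_i$, integrate over $B_r$, apply the divergence theorem, and use $\nabla(x\cdot\nabla u_i)=\nabla u_i+(x\cdot\nabla)\nabla u_i$ together with $\tfrac12\Phi'(|\nabla u_i|^2)\,x\cdot\nabla\bigl(|\nabla u_i|^2\bigr)=\tfrac12\,x\cdot\nabla\Phi(|\nabla u_i|^2)$; the factor $2$ in front of $r\int_{\partial B_r}\Phi'(|\nabla u_i|^2)(\partial_r u_i)^2$ simply appears when the assembled identity is multiplied by $2$ to clear the halves, not from merging two different boundary contributions, so the clash you worry about does not occur. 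Note also that your derivation correctly produces the nonlinear terms as $-2n\int_{B_r}\tilde H(u)+2r\int_{\partial B_r}\tilde H(u)$, which is the form the paper actually uses afterwards (the lemma as printed writes $H$ for $\tilde H$ and a ball integral where a boundary integral is meant).
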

\begin{proof} Multiply the $i^{th}$ equation of (\ref{main}) with $x\cdot \nabla u_i$ and then apply the divergence theorem to get 
  \begin{eqnarray*}\label{}
\int_{\partial B_r} \Phi(|\nabla u_i|^2) &=& 2 \int_{\partial B_r} \Phi'(|\nabla u_i|^2) (\partial_r u_i)^2 - \frac{2}{r} \int_{B_r} \Phi'(|\nabla u_i|^2)  |\nabla u_i|^2 \\&&
 - \frac{2}{r}\int_{B_r}   (x \cdot \nabla u_i) \div\left( \Phi'(|\nabla u_i|^2) \nabla u_i \right) +\frac{n}{r} \int_{B_r} \Phi(|\nabla u_i|^2) ,
  \end{eqnarray*}
for each $i=1,\cdots,m$.  Doing some straightforward computations as well as applying (\ref{main})  gives the desired result. 

\end{proof}

We are now ready to provide a proof for Theorem \ref{monoh}. 
\\
\\
\noindent {\it Proof of Theorem \ref{monoh}}.  Differentiating $I(r)$, given by (\ref{Ir}), with respect to $r$ gives 
 \begin{eqnarray*}\label{}
I_\alpha'(r) r^{n-\alpha+1} &=&  (\alpha-n)   \int_{B_r} \sum_{i=1}^{m} \Phi(|\nabla u_i|^2) - 2\tilde H(u) 
\\&&+ r  \int_{\partial B_r} \sum_{i=1}^{m} \Phi(|\nabla u_i|^2) - 2\tilde H(u).
  \end{eqnarray*}
 Substituting the value of $-n \int_{B_r} \sum_{i=1}^{m} \Phi( |\nabla u_i|^2 ) $,  as it is provided in Lemma \ref{poho},  one can show that  
 \begin{eqnarray*}\label{}
I_\alpha'(r) r^{n-\alpha+1} &=& 2r \int_{\partial B_r}  \sum_{i=1}^{m} \Phi'(|\nabla u_i|^2) (\partial_r u_i)^2 
\\&&+ \int_{B_r} \left[ \sum_{i=1}^{m} \left( -2\Phi'(|\nabla u_i|^2)  |\nabla u_i|^2 + \alpha \Phi(|\nabla u_i|^2) \right) -2 \alpha  \tilde H(u) \right].
  \end{eqnarray*}
The rest of the proof is straightforward. 

\hfill $ \Box$

As an immediate consequence of Theorem \ref{monoh} we have the  following Liouville theorem for solutions of (\ref{main}) with a finite energy.  
\begin{thm}\label{LioufiniteE}
Suppose that assumptions of Theorem \ref{monoh} hold.  Assume also that $u=(u_i)_{i=1}^m$ has a finite energy that is  
\begin{equation}\label{Ilimit}
 \int_{\mathbb R^n} \left[ \sum_{i=1}^{m} \Phi(|\nabla u_i|^2) - 2\tilde H(u) \right] dx <\infty.
 \end{equation}
Then  each $u_i$ must be constant in dimensions $n \ge \alpha$ for $i=1,\cdots,m$.  
 \end{thm}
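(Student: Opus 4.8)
The plan is to exploit the monotonicity of $I_\alpha$ established in Theorem \ref{monoh} together with the finiteness of the energy. Set $\alpha = n$ (this is the borderline case admissible under the hypothesis $n \ge \alpha$, and the one that extracts the strongest conclusion). By Theorem \ref{monoh}, since $n = \alpha \ge \alpha^*$ by assumption and $\tilde H(u) \le 0$, the function
\[
I_n(r) = \int_{B_r} \left[ \sum_{i=1}^m \Phi(|\nabla u_i|^2) - 2\tilde H(u) \right] dx
\]
is monotone nondecreasing in $r$. On the other hand, the finite-energy hypothesis \eqref{Ilimit} says precisely that $I_n(r)$ is bounded above as $r \to \infty$, with limit equal to the (finite) total energy integral. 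Hence $I_n$ has a finite limit $L$ at infinity.

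Next I would use the lower bound \eqref{mmI} from Theorem \ref{monoh} in the form
\[
I_n'(r) \ge \frac{2}{r^{n-\alpha}} \int_{\partial B_r} \sum_{i=1}^m \Phi'(|\nabla u_i|^2)(\partial_r u_i)^2 - \frac{2n}{r} \int_{B_r} \tilde H(u),
\]
which with $\alpha = n$ reads $I_n'(r) \ge 2 \int_{\partial B_r} \sum_i \Phi'(|\nabla u_i|^2)(\partial_r u_i)^2 - \frac{2n}{r}\int_{B_r}\tilde H(u)$; both terms on the right are nonnegative because $\Phi' > 0$ and $\tilde H \le 0$. Since $\int_0^\infty I_n'(r)\,dr = L - I_n(0) < \infty$, the first (nonnegative) term must be integrable in $r$, which forces $\int_{\mathbb R^n} \sum_i \Phi'(|\nabla u_i|^2)(\partial_r u_i)^2\,dx < \infty$; more importantly the second term, $\frac{2n}{r}\int_{B_r} \tilde H(u)$, being squeezed between $0$ and an integrable function of $r$, cannot have $\frac{1}{r}\int_{B_r}(-\tilde H(u)) $ bounded below by a positive constant along a sequence — indeed integrability of $\frac{1}{r}g(r)$ with $g$ nondecreasing and nonnegative forces $g \equiv 0$. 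Applying this with $g(r) = \int_{B_r}(-\tilde H(u))\,dx$ (nondecreasing since $-\tilde H \ge 0$) gives $\tilde H(u) \equiv 0$ a.e., and then feeding that back, $I_n(r) = \int_{B_r}\sum_i \Phi(|\nabla u_i|^2)$ is nondecreasing, nonnegative and, by the same $\frac1r\int_{B_r}$ argument applied through \eqref{mmI} once more (or directly: a nondecreasing nonnegative function whose scale-invariant average stays integrable must vanish), we conclude $\Phi(|\nabla u_i|^2) \equiv 0$; since $\Phi(s) > 0$ for $s > 0$, this yields $\nabla u_i \equiv 0$, i.e. each $u_i$ is constant.

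The main obstacle I anticipate is making the "scale-invariant integrability forces vanishing" step rigorous and clean: one wants to say that if $g(r) := \int_{B_r} \psi\,dx$ with $\psi \ge 0$ satisfies $\int_1^\infty \frac{g(r)}{r}\,dr < \infty$, then $g \equiv 0$. This is true because $g$ is nondecreasing, so $g(r) \ge g(R)$ for $r \ge R$, whence $\int_R^{2R} \frac{g(r)}{r}\,dr \ge g(R)\log 2$; if $g(R_0) > 0$ for some $R_0$ the tail $\int_{R_0}^\infty \frac{g(r)}{r}\,dr$ diverges, a contradiction. I would isolate this as a short preliminary observation and then apply it twice — once to $-\tilde H(u)$ and once to $\sum_i \Phi(|\nabla u_i|^2)$ — being careful that after the first application the expression for $I_n$ and its derivative simplify correctly. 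A minor point to check is that all integrations and the use of \eqref{mmI} are legitimate under \eqref{Ilimit}, i.e. that finite energy plus $\tilde H \le 0$ indeed gives both $\int \sum_i \Phi(|\nabla u_i|^2) < \infty$ and $\int (-\tilde H(u)) < \infty$ separately, which is immediate since both integrands are nonnegative and their sum is integrable.
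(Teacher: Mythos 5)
Your first half is correct and essentially parallels the paper's treatment of the borderline case: since $n\ge\alpha\ge\alpha^*$, Theorem \ref{monoh} may indeed be applied with exponent $n$, so $I_n$ is nondecreasing and, by (\ref{Ilimit}), has a finite limit; the term $-\frac{2n}{r}\int_{B_r}\tilde H(u)$ in (\ref{mmI}), combined with your observation that a nonnegative nondecreasing $g$ with $\int^\infty g(r)r^{-1}\,dr<\infty$ must vanish, correctly yields $\tilde H(u)\equiv 0$ (the paper reaches the same conclusion through the logarithmic lower bound (\ref{Ira})). The genuine gap is your final step, where you claim $\Phi(|\nabla u_i|^2)\equiv 0$. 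Once $\tilde H(u)\equiv 0$, inequality (\ref{mmI}) with $\alpha=n$ reduces to $I_n'(r)\ge 2\int_{\partial B_r}\sum_i\Phi'(|\nabla u_i|^2)(\partial_r u_i)^2$, which only gives $\int_{\mathbb R^n}\sum_i\Phi'(|\nabla u_i|^2)(\partial_r u_i)^2<\infty$; there is no term of the form $\frac{c}{r}\int_{B_r}\sum_i\Phi(|\nabla u_i|^2)$ available to feed into your lemma. The ``direct'' variant fails as well: finite energy only gives $\frac1r\int_{B_r}\sum_i\Phi(|\nabla u_i|^2)\le E\,r^{-1}$, exactly the borderline non-integrable rate, so the hypothesis of your lemma is not verified --- and it cannot be, since otherwise every finite-energy map would be constant without any use of the equation, which is absurd.

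To close the argument you need the two ingredients the paper uses and your choice $\alpha=n$ discards. If $n>\alpha$, keep the original exponent: monotonicity gives $0\le I_\alpha(r)\le R^{\alpha-n}\int_{\mathbb R^n}\bigl[\sum_i\Phi(|\nabla u_i|^2)-2\tilde H(u)\bigr]\to 0$ as $R\to\infty$, hence $I_\alpha\equiv 0$, and since both pieces of the integrand are nonnegative this forces $\nabla u_i\equiv 0$ directly (no separate $\tilde H$ step is needed); by normalizing the exponent to $n$ you lost precisely this decay of the weight $r^{\alpha-n}$. If $n=\alpha$, after $\tilde H(u)\equiv 0$ you must return to the equation: since $\tilde H\le 0$ and $\tilde H(u(x))=0$, each value $u(x)$ is a maximum point of $\tilde H$, so $H_i(u)=\partial_i\tilde H(u)\equiv 0$ and each $u_i$ solves the homogeneous equation $\div(\Phi'(|\nabla u_i|^2)\nabla u_i)=0$; a $\Phi$-harmonic function with finite energy (\ref{Ilimit}) is then constant. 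This is the step the paper invokes with the phrase that ``each component $u_i$ is harmonic together with (\ref{Ilimit}) completes the proof,'' and some version of it is unavoidable in the borderline dimension.
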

\begin{proof} First suppose that $n> \alpha$.  From  Theorem  \ref{monoh}  for any $R>r$ we have 
\begin{eqnarray*}\label{}
0\le I_\alpha(r) &=&\frac{1}{r^{n-\alpha}} \int_{B_r}  \left[ \sum_{i=1}^{m} \Phi(|\nabla u_i|^2) - 2\tilde H(u)\right]
\\&\le& \frac{1}{R^{n-\alpha}} \int_{B_R} \left[\sum_{i=1}^{m} \Phi(|\nabla u_i|^2) - 2\tilde H(u)\right]
\\&\le & \frac{1}{R^{n-\alpha}} \int_{\mathbb R^n}  \left[\sum_{i=1}^{m} \Phi(|\nabla u_i|^2) - 2\tilde H(u) \right] . 
  \end{eqnarray*}
Sending $R\to \infty$, we get the desired result.   Now suppose that $n= \alpha$. Again from Theorem  
\ref{monoh} we have 
\begin{equation}
r I'_\alpha(r)  \ge  -2\alpha \int_{B_r} \tilde H(u) .
\end{equation}
From this for any $r >\bar r$, where $\bar r$ is fixed, we have 
\begin{equation}\label{Ira}
 I_\alpha (r)  \ge I_\alpha(\bar r)  -2\alpha \ln \left(\frac{r}{\bar r}\right) \int_{B_{\bar r}} \tilde H(u). 
\end{equation}
Note that (\ref{Ilimit}) implies that $\lim_{r\to\infty} I_\alpha(r)<\infty$. From this and  (\ref{Ira}) we conclude that $\tilde H=0$. The fact that each component $u_i$ is harmonic together with (\ref{Ilimit}) completes the proof. 

\end{proof}

Another consequence of the monotonicity formula, given in (\ref{mmI}),   is the following lower bound on the energy. 

 \begin{cor} \label{corlow}
 Suppose that assumptions of Theorem \ref{monoh} hold. Then, the following lower bound holds for the energy functional 
 \begin{equation}\label{enerLower}
  \int_{B_R} \left[ \sum_{i=1}^{m} \Phi(|\nabla u_i|^2) - 2\tilde H(u)\right] dx \ge C  R^{\alpha-n} \ \ \text{for all} \ \ R>1,
  \end{equation}
   where $C=I(1)$ is independent from $R$.  
 \end{cor}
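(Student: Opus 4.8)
The plan is to read the estimate off directly from the monotonicity established in Theorem \ref{monoh}, so the argument is essentially one line once the objects are set up correctly. First I would note that, since $u=(u_i)_{i=1}^m$ is a classical solution and both $\Phi$ and $\tilde H$ are continuous, the quantity
\[
I_\alpha(1)=\int_{B_1}\left[\sum_{i=1}^{m}\Phi(|\nabla u_i|^2)-2\tilde H(u)\right]dx
\]
is a finite constant; moreover the integrand is pointwise nonnegative, because $\Phi(|\nabla u_i|^2)\ge 0$ (recall $\Phi\ge 0$ with $\Phi(0)=0$) and $-2\tilde H(u)\ge 0$ by the standing hypothesis $\tilde H(u)\le 0$ of Theorem \ref{monoh}. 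Hence $C:=I_\alpha(1)\ge 0$ is well defined and manifestly independent of $R$.

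Next, by Theorem \ref{monoh} the map $r\mapsto I_\alpha(r)$ is nondecreasing on $(0,\infty)$, so for every $R>1$ we have $I_\alpha(R)\ge I_\alpha(1)=C$. Unwinding the definition (\ref{Ir}) of $I_\alpha$, this inequality reads
\[
\frac{1}{R^{n-\alpha}}\int_{B_R}\left[\sum_{i=1}^{m}\Phi(|\nabla u_i|^2)-2\tilde H(u)\right]dx\ge C,
\]
and multiplying through by $R^{n-\alpha}>0$ gives
\[
\int_{B_R}\left[\sum_{i=1}^{m}\Phi(|\nabla u_i|^2)-2\tilde H(u)\right]dx\ge C\,R^{n-\alpha}\qquad\text{for all }R>1,
\]
which is the claimed lower bound (with the exponent $n-\alpha$; in the strong form $\alpha=1$ this matches the scaling $R^{n-1}$ of the upper bound in Theorem \ref{enopthm}). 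There is no genuine obstacle here: the only points requiring a word are the finiteness and nonnegativity of $C=I_\alpha(1)$, and both follow at once from the regularity of $u$ and the sign condition $\tilde H\le 0$ already assumed in Theorem \ref{monoh}.
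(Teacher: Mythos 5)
Your argument is correct and is exactly the immediate deduction the paper intends: Corollary \ref{corlow} is stated as a direct consequence of Theorem \ref{monoh} with no separate proof, and your one-line use of $I_\alpha(R)\ge I_\alpha(1)$, together with the finiteness and nonnegativity of $C=I_\alpha(1)$ (from $\Phi\ge 0$, $\Phi(0)=0$ and $\tilde H(u)\le 0$), is all that is needed. One remark: what you actually prove is $\int_{B_R}\bigl[\sum_{i=1}^m\Phi(|\nabla u_i|^2)-2\tilde H(u)\bigr]\,dx\ge C\,R^{\,n-\alpha}$, whereas the corollary as printed has exponent $\alpha-n$; the exponent $n-\alpha$ is the one that follows from the definition (\ref{Ir}) and is the one consistent with the comparison to the upper bound $C R^{n-1}$ of Theorem \ref{enopthm} (and, since $C\ge 0$ and $R>1$, it implies the printed weaker form whenever $\alpha\le n$), so the discrepancy looks like a typo in the statement rather than a gap in your proof.
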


As the next theorem, we prove an upper bound on the energy function.  

\begin{thm} \label{enopthm}
Suppose that $u=(u_i)_{i=1}^m$ is a bounded $H$-monotone solution of (\ref{main}) such that for each 
 $i=1,..,m$,
\begin{equation}
\lim_{x_n\to \infty} u_i( x',x_n)=a_i, \ \ \ \forall  x=( x', x_n)\in\mathbb{R}^{n}
\end{equation}
for some constants $a_i$.  Then 
\begin{equation}\label{energybound}
J_R(u):=\int_{B_{R}}  \left[  \sum_{i=1}^m \Phi(|\nabla u_i|^2) -2 \tilde H(u)+2\tilde H(a) \right] d x \le C R^{n-1},
\end{equation}
where  $a=(a_i)_{i=1}^{m}$ and $C$ are  independent from $R$.
 \end{thm}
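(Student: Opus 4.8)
The plan is to obtain the upper bound $J_R(u) \le C R^{n-1}$ by a comparison/truncation argument in the $x_n$-direction, exploiting the $H$-monotonicity and the convergence $u_i(x',x_n)\to a_i$ as $x_n\to\infty$. The guiding principle, going back to the work on Allen--Cahn and Modica-type estimates, is that an $H$-monotone solution that limits onto a critical point $a$ of $\tilde H$ behaves, layer by layer in $x_n$, like a one-dimensional profile, so that the energy on a cylinder of radius $R$ concentrates on an $O(1)$-thick slab and contributes $O(R^{n-1})$ rather than $O(R^n)$. Concretely, I would first fix a large parameter $T$ and split $B_R = (B_R \cap \{x_n < -T\}) \cup (B_R \cap \{|x_n|\le T\}) \cup (B_R \cap \{x_n > T\})$; the middle slab has volume $O(T R^{n-1})$, so on it every term of the integrand is bounded (using boundedness of $u$, hence of $\tilde H(u)$, and a gradient bound — which for case (A) is built into $W^{1,p}_{loc}\cap L^\infty$ estimates and for case (B) is the hypothesis $|\nabla u|\in L^\infty$) and contributes $O(R^{n-1})$. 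It thus suffices to control the two tails $x_n > T$ and $x_n<-T$.

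For the tail $x_n>T$, the idea is to use a competitor: since $u(\cdot,x_n)\to a$, the ``energy density'' $W(u):=-2\tilde H(u)+2\tilde H(a)$ should decay, and one compares the energy of $u$ on the half-cylinder with that of the constant map $a$. The cleanest way is the standard trick: for $t$ large let $u^t(x',x_n)$ be $u$ truncated/interpolated so that it equals $a$ for $x_n$ beyond $t$ and interpolates linearly on a unit slab; plugging $u^t$ into the energy functional $E$ and using that $u$ is a critical point (so $E(u)\le E(u^t)$ up to boundary terms on $\partial B_R$ controlled by the finite-slab estimate), one gets that $\int_{B_R\cap\{x_n>t\}} \big[\sum_i \Phi(|\nabla u_i|^2) + W(u)\big]$ is bounded by the cost of the interpolation, which is $O(R^{n-1})$ uniformly in $t$, plus a term that tends to $0$ as $t\to\infty$ by dominated convergence (here $W(u)\ge 0$ is needed, which follows since along the monotone trajectory $\tilde H$ increases toward $\tilde H(a)$ — this is exactly where $H$-monotonicity and the hypothesis $\partial_i H_j\partial_j H_i>0$ enter). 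The tail $x_n<-T$ is handled symmetrically, noting that $H$-monotonicity forces each $u_i$ to have a limit as $x_n\to-\infty$ as well; one replaces $a$ by $b:=\lim_{x_n\to-\infty}u$, estimates the energy relative to $b$, and then absorbs the bounded discrepancy $2\tilde H(a)-2\tilde H(b)$ on the slab of volume $O(R^{n-1})$ — but one must check $\tilde H(b)=\tilde H(a)$, or at least bound the difference, which again uses that the one-dimensional limiting profile connecting $b$ to $a$ has finite (in fact $O(1)$) energy per cross-section.

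The main obstacle I anticipate is making the interpolation/comparison step rigorous in the quasilinear setting: unlike the semilinear case $\Phi(s)=s$, the functional $\sum_i \Phi(|\nabla u_i|^2)$ is not quadratic, so the cross terms produced when one writes $E(u^t) = E(u) + (\text{boundary terms}) + (\text{interpolation cost})$ do not split cleanly, and one needs convexity of $s\mapsto \Phi(|s|^2)$ (guaranteed by the standing assumptions $\Phi'>0$ and $\Phi'+2s\Phi''>0$, i.e. ellipticity of the operator) together with the uniform gradient bound to control $\Phi(|\nabla u^t_i|^2)$ on the interpolation slab by $C(1+|\nabla u_i|^2)$ and hence by a constant. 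A secondary technical point is justifying that the boundary flux through $\partial B_R \cap \{x_n>T\}$ is genuinely $O(R^{n-1})$ and not larger; this follows from the gradient bound since that lateral boundary has area $O(R^{n-1}\cdot(\text{height}))$ — but the height here is infinite, so one really must first truncate at $x_n=t$, get a bound uniform in both $R$ and $t$ with the lateral contribution being $O(R^{n-2}\cdot t)$, absorb it or send $t\to\infty$ before $R$, and only then let the interpolation slab do its work. Organizing the order of limits ($t\to\infty$ with $R$ fixed, uniformity of constants in $R$) is the part that requires care, but it is routine once the comparison inequality is set up.
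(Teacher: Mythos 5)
Your slab decomposition and comparison-map strategy is genuinely different from the paper's argument, and as written it has real gaps. The central one is the assertion that criticality yields $E(u)\le E(u^t)$ up to boundary terms when $u^t$ is a truncated/interpolated competitor equal to $a$ beyond $x_n=t$: being a solution only makes the first variation vanish; it gives no energy comparison against a far-away competitor unless $u$ is a minimizer, which is neither assumed nor proved (for systems, $H$-monotone solutions are not known to be local minimizers, and the paper never uses minimality). Two further steps are unjustified: (i) the sign $-2\tilde H(u)+2\tilde H(a)\ge 0$ that you invoke on the tails is essentially a pointwise Modica-type estimate for systems, which is precisely what the paper lists as Open Problem 1; (ii) on the tail $x_n<-T$ the limit $b(x')=\lim_{x_n\to-\infty}u(x',x_n)$ need not be a constant, and even if it were, the discrepancy $2\tilde H(a)-2\tilde H(b)$ is carried by $B_R\cap\{x_n<-T\}$, a set of volume of order $R^{n}$, so it cannot be absorbed into $CR^{n-1}$ without first proving $\tilde H(b)=\tilde H(a)$, which would require a separate argument. (Also, $B_R$ is a ball, so the ``infinite height'' of the lateral boundary you worry about does not occur; that concern suggests the competitor was set up on a cylinder rather than on the actual domain.)

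The paper's proof avoids all of this by sliding rather than truncating: it sets $u^t(x)=u(x',x_n+t)$, which is again an exact solution, and computes $\partial_t J_R(u^t)=\sum_i\int_{\partial B_R}\Phi'(|\nabla u^t_i|^2)\,\partial_\nu u^t_i\,\partial_t u^t_i$ by multiplying the equation by $\partial_t u^t_i$ and integrating by parts; the bulk terms cancel exactly, so no convexity or splitting of $\Phi$ is needed. The $H$-monotonicity sign structure together with the uniform flux bound $|\Phi'(|\nabla u_i|^2)\nabla u_i|\le M$ lets one integrate in $t$, and the $t$-integral telescopes into $M\int_{\partial B_R}\sum_i|u^t_i-u_i|\,dS\le CR^{n-1}$ by boundedness of $u$. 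Finally $J_R(u^t)\to 0$ as $t\to\infty$ for fixed $R$: the potential part converges by pointwise convergence $u^t\to a$, and testing the equation with $u^t_i-a_i$ kills the gradient part via $0\le\Phi(s)\le 2s\Phi'(s)$. This gives $J_R(u)\le J_R(u^t)+CR^{n-1}$ and hence the claim, with no sign condition on the potential, no minimality, and no information needed about the behavior as $x_n\to-\infty$. If you want to salvage your outline, replace the truncation competitor by the translation $u^t$ and derive the flux identity; the slab decomposition then becomes unnecessary.
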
 
\begin{proof} Define the sequence of shift functions $u^t=(u_i^t)_{i=1}^m$  where  $u_i^t(x):=u_i(x',x_n+t)$ for $t\in\mathbb{R}$ and $x=(x',x_n)\in\mathbb R^n$. Note that $u^t=(u_i^t)_{i=1}^m$ satisfies 
\begin{equation}
\label{maintt}
-\div\left(\Phi'(|\nabla u^t_i|^2) \nabla u^t_i\right) =   H_i(u^t)   \ \ \text{in}\ \ \mathbb{R}^n. 
  \end{equation}
The fact that $u_i^t$ is convergent to $a_i$ pointwise, it is straightforward to  see that 
\begin{equation}\label{Ht}
 \int_{B_{R}} (\tilde H(u^t) -\tilde H(a)) d x \to 0 \ \ \text{when} \ \ {t\to\infty}.  
 \end{equation}

On the other hand, multiply both sides  of (\ref{maintt}) with $u_i^t-a_i$ and integrate by parts to get
\begin{eqnarray*}
\label{}
\hfill -\int_{B_{R}} \Phi'(  |\nabla u_i^t|^2) |\nabla u_i^t|^2+ \int_{\partial B_{R}} \Phi'(  |\nabla u_i^t|^2) \partial_\nu u_i^t (u_i^t-a_i) =-\int_{B_{R}}   H_i(u^t) (u_i^t-a_i),
  \end{eqnarray*}
Sending $t\to \infty$ implies that 
$$\int_{B_{R}} \Phi'(  |\nabla u_i^t|^2) |\nabla u_i^t|^2\to 0. $$
Note that due to the assumption $2s\Phi''(s)+\Phi(s)>0$ when $s>0$ and $\Phi(0)=0$ we have $0\le \Phi(s)\le 2 \Phi'(s) s$ for any $s>0$.  This implies that 
\begin{equation}\label{phit}
0\le \int_{B_{R}} \Phi(  |\nabla u_i^t|^2) \le 2\int_{B_{R}} \Phi'(  |\nabla u_i^t|^2) |\nabla u_i^t|^2\to 0 \ \ \text{as} \ \ t\to 0. 
  \end{equation}
From this and (\ref{Ht}) we get 
\begin{equation}\label{energydecayt}
\lim_{t\to\infty} J_R(u^t)=0.
\end{equation}
We now use $J_R(u^t)$ to construct an upper bound on $J_R(u)$. Note that   differentiating the energy functional with respect to $t$, one gets 
\begin{eqnarray}\label{der-energyt}
\partial_t J_R(u^t)= \sum_{i=1}^m \int_{B_{R}} \left[ 2  \Phi'(|\nabla u^t_i|^2)   \nabla u_i^t\cdot \nabla (\partial _t u_i^t)- 2  H_i(u^t) \partial_t u_i^t \right]. 
    \end{eqnarray}
Multiplying the system of equations (\ref{maintt}) with $\partial_t u^t$ and performing integration  by parts we obtain 
  \begin{eqnarray}
\label{partialtt}
\int_{B_{R}}   H_i(u^t) \partial_t u_i^t &=& \int_{B_{R}} \Phi'(|\nabla u^t_i|^2)  \nabla u_i^t\cdot \nabla (\partial _t u_i^t) 
\\&& \nonumber- \int_{\partial B_{R}}  \Phi'(|\nabla u^t_i|^2)  \partial_\nu u_i^t \partial_t u_i^t , 
  \end{eqnarray}
for each $i=1,\cdots,m$.  From (\ref{partialtt}) and  (\ref{der-energyt}) we obtain 
  \begin{eqnarray}\label{der-energy-bdt}
\partial_t E_R(u^t)= \sum_{i=1}^m  \int_{\partial B_{R}}  \Phi'(|\nabla u^t_i|^2) \partial_\nu u_i^t \partial_t u_i^t.
   \end{eqnarray}
 Note that there exist a constant $M$ such that $-M\le  \Phi'(|\nabla u^t_i|^2) \partial_\nu u^t\le M$ and $\partial_t u_i^t>0>\partial_t u_j^t$ for $i\in \bar I$ and $j\in \bar J$ . Therefore, 
  \begin{eqnarray}\label{der-energy-bdMt}
\partial_t E_R(u^t) \ge M \int_{\partial B_{R}}\left( \sum_{j\in \bar J} \partial_t u_j^t - \sum_{i\in \bar I}\partial_t u_i^t \right) d S.
   \end{eqnarray}
Therefore, 
 \begin{eqnarray}\label{der-energy-last}
 J_R(u)&=& J_R(u^t)- \int_{0}^{t} \partial_t J_R(u^s) ds 
 \nonumber\\
 &\le&J_R(u^t) +M \int_{0}^{t}  \int_{\partial B_{R}} \left(  \sum_{i\in \bar I} \partial_s u_i^s-  \sum_{j\in \bar J} \partial_s u_j^s \right) dS ds\nonumber\\
 &=&J_R(u^t) +M \int_{\partial B_{R}}  \left(  \sum_{i\in \bar I} (u_i^t-u_i)+ \sum_{j\in \bar J}(u_j-u_j^t)  \right) dS.
 \end{eqnarray}
From the definiton of $H$-monotonicity and the sets of $\bar I,\bar J$ we have  $u_i<u_i^t$ and $u_j^t<u_j$ for all $i\in \bar I$, $j\in \bar J$ and $t\in \mathbb{R^+}$. Therefore, 
\begin{equation}\label{energytt}
J_R(u) \le  J_R(u^t) +M \int_{\partial B_{R}} \left(  \sum_{i\in I} (u_i^t-u_i)+ \sum_{j\in J}(u_j-u_j^t)  \right)  dS \ \ \text{for all}\ \ t \in\mathbb{R^+},
\end{equation}  
where $M:=\max_{i=1}^m \left\{|| \Phi'(|\nabla u_i|^2) |\nabla u_i|  ||_{L^\infty(\mathbb{R}^n)}\right\}$.  The upper bound (\ref{energytt}) implies
\begin{eqnarray*}
  E_R(u) \le E_R(u^t) +C  |\partial B_R| \ \ \ \text{for all} \ \ t\in \mathbb{R^+}.
 \end{eqnarray*}
Sending  $t\to\infty$ and using  (\ref{energydecayt}), finally we obtain that  $$ J_R(u) \le C |\partial B_R|\le C R^{n-1}. $$ 
This provides the desired result. 

\end{proof}
Before we finish this section, let us mention a couple of open problems for the system of equations (\ref{main}). 
\begin{open} Under what assumptions on $H=(H_i)_{i=1}^m$ and solutions,  one can  provide a counterpart of the pointwise inequalities provided by Modica in \cite{m} and Caffarelli et al.  in \cite{cgs} for solutions of (\ref{main}) when $m \ge 2$? 
 \end{open}

\begin{open} In the light of Theorem \ref{enopthm}, Corollary \ref{corlow} and  Theorem B, one might expect that $I_\alpha(r)$ should be a nondecreasing function of $r$ when $\alpha\ge \alpha^*=1$  for the case of systems that is  when $m \ge 2$. 
\end{open}

\section{Geometric Poincar\'{e} and stability inequalities for systems}\label{secpoin}
Note that the matrix $\mathcal A(\eta):=(a_{i,j}(\eta))_{i,j=1}^n$ where $a_{i,j}(\eta)$ is defined by
\begin{equation}\label{aij1}
a_{i,j}(\eta): = 2 \Phi''(|\eta|^2) \eta_i\eta_j+ \Phi'(|\eta|^2) \delta_{ij}. 
\end{equation}
 is symmetric and positive definite for every $\eta\in\mathbb R^n$. This is because for any $\zeta\in\mathbb R^n$, 
 \begin{eqnarray}
\label{Aetazeta} \mathcal A(\eta) \zeta.\zeta &=&\sum_{i,j=1}^m 2 \zeta_i \zeta_j \Phi''(|\eta|^2) \eta_i\eta_j +\Phi'(|\eta|^2) \zeta_i\zeta_j \delta_{i,j}
\\&=& \nonumber 2\label{phi''}  \Phi''(|\eta|^2) |\zeta\cdot \eta|^2 +\Phi'(|\eta|^2) |\zeta|^2.
\end{eqnarray}
Note that when $\Phi''(\eta)$ is positive clearly $A(\eta) \zeta.\zeta $ is positive since $\Phi'$ is positive and when $\Phi''(\eta)$  is negative applying Young's inequality together with $2\Phi''(s)s+\Phi'(s)>0$ when $s>0$ implies that $A(\eta) \zeta.\zeta $ is positive. We are now ready to prove the stability inequality for solutions of (\ref{main}). Note that such an inequality for the case of semilinear systems is given in \cite{cf,fg,mf}.

\begin{lemma}\label{stabineqphi}  
  Let $u=(u_i)_{i=1}^m$ denote a stable solution of (\ref{main}).  Then 
\begin{equation} \label{stability}
\sum_{i,j=1}^{m} \int_{\mathbb R^n}  \sqrt{\partial_j H_i(u) \partial_i H_j(u)} \zeta_i \zeta_j \le \sum_{i=1}^{m} \int_{\mathbb R^n}  \mathcal A(\nabla u_i) \nabla \zeta_i \cdot \nabla \zeta_i, 
\end{equation} 
for any $\zeta=(\zeta_i)_{i=1}^m$ where $ \zeta_i \in C_c^1(\mathbb R^n)$ for $1\le i\le m$. 
\end{lemma}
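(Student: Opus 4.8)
The plan is to run a Picone‑type (completing‑the‑square) argument based on the stability function $\phi=(\phi_i)_{i=1}^m$ provided by Definition~\ref{stable}: each $\phi_i$ has a fixed sign, $\partial_j H_i(u)\,\phi_i\phi_j>0$ pointwise for all $i,j$, and $-\div(\mathcal A(\nabla u_i)\nabla\phi_i)=\sum_{j=1}^m\partial_j H_i(u)\,\phi_j$. For each fixed $i$, I would insert the test function $\zeta_i^2/\phi_i$ into the weak form (\ref{weakstab}) of this linearized equation. Since $\phi_i$ may vanish where $\zeta_i\ne0$, I first regularize by replacing $\phi_i$ with $\phi_i+\varepsilon\,\mathrm{sign}(\phi_i)$ (legitimate precisely because $\phi_i$ does not change sign) --- equivalently, testing with $\zeta_i^2\phi_i/(\phi_i^2+\varepsilon)$ --- carrying out the computation, and letting $\varepsilon\to0^+$ at the end; the limit is harmless since $\mathcal A(\nabla u_i)$ is bounded on $\mathrm{supp}\,\zeta_i$ for a classical solution. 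Expanding $\nabla(\zeta_i^2/\phi_i)=\frac{2\zeta_i}{\phi_i}\nabla\zeta_i-\frac{\zeta_i^2}{\phi_i^2}\nabla\phi_i$, this produces
\begin{equation*}
\int_{\mathbb R^n}\frac{2\zeta_i}{\phi_i}\,\mathcal A(\nabla u_i)\nabla\phi_i\cdot\nabla\zeta_i-\int_{\mathbb R^n}\frac{\zeta_i^2}{\phi_i^2}\,\mathcal A(\nabla u_i)\nabla\phi_i\cdot\nabla\phi_i=\sum_{j=1}^m\int_{\mathbb R^n}\partial_j H_i(u)\,\frac{\phi_j}{\phi_i}\,\zeta_i^2 .
\end{equation*}

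Next I would use that $\mathcal A(\nabla u_i)$ is symmetric and positive definite --- established in (\ref{Aetazeta}) from the hypothesis $2s\Phi''(s)+\Phi'(s)>0$. The Cauchy--Schwarz inequality for the inner product $\xi,\eta\mapsto\mathcal A(\nabla u_i)\xi\cdot\eta$, applied to $\xi=\nabla\zeta_i$ and $\eta=(\zeta_i/\phi_i)\nabla\phi_i$, gives the pointwise estimate
\begin{equation*}
\frac{2\zeta_i}{\phi_i}\,\mathcal A(\nabla u_i)\nabla\phi_i\cdot\nabla\zeta_i\le\mathcal A(\nabla u_i)\nabla\zeta_i\cdot\nabla\zeta_i+\frac{\zeta_i^2}{\phi_i^2}\,\mathcal A(\nabla u_i)\nabla\phi_i\cdot\nabla\phi_i .
\end{equation*}
Substituting this into the previous identity, the two $\nabla\phi_i$‑terms cancel and I obtain, for each $i=1,\dots,m$,
\begin{equation*}
\sum_{j=1}^m\int_{\mathbb R^n}\partial_j H_i(u)\,\frac{\phi_j}{\phi_i}\,\zeta_i^2\le\int_{\mathbb R^n}\mathcal A(\nabla u_i)\nabla\zeta_i\cdot\nabla\zeta_i .
\end{equation*}

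Finally I would sum over $i$ and symmetrize. Writing $A_{ij}:=\partial_j H_i(u)\,\phi_j/\phi_i=\partial_j H_i(u)\,\phi_i\phi_j/\phi_i^2$, which is pointwise strictly positive by the stability hypothesis, and noting $A_{ij}A_{ji}=\partial_j H_i(u)\,\partial_i H_j(u)$, the arithmetic--geometric mean inequality gives $A_{ij}\zeta_i^2+A_{ji}\zeta_j^2\ge2\sqrt{A_{ij}A_{ji}}\,|\zeta_i\zeta_j|\ge2\sqrt{\partial_j H_i(u)\,\partial_i H_j(u)}\,\zeta_i\zeta_j$ for $i\ne j$, while $A_{ii}\zeta_i^2=\sqrt{\partial_i H_i(u)\,\partial_i H_i(u)}\,\zeta_i\zeta_i$. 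Grouping the left‑hand side of the summed inequality into diagonal terms and $(i,j)$/$(j,i)$ pairs therefore yields
\begin{equation*}
\sum_{i,j=1}^m\int_{\mathbb R^n}\sqrt{\partial_j H_i(u)\,\partial_i H_j(u)}\,\zeta_i\zeta_j\le\sum_{i,j=1}^m\int_{\mathbb R^n}A_{ij}\zeta_i^2\le\sum_{i=1}^m\int_{\mathbb R^n}\mathcal A(\nabla u_i)\nabla\zeta_i\cdot\nabla\zeta_i ,
\end{equation*}
which is exactly (\ref{stability}). The only genuine obstacle is the division by $\phi_i$ at its zero set, and it is dispatched by the $\varepsilon$‑regularization described above (in the spirit of the Berestycki--Caffarelli--Nirenberg and Ambrosio--Cabr\'e schemes); the rest is the algebra of positive definite quadratic forms together with the sign conditions built into the definition of stability.
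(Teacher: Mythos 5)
Your proposal is correct and follows essentially the same route as the paper's proof: test the linearized equation (\ref{L}) with $\zeta_i^2/\phi_i$, absorb the cross term using the positive definiteness of $\mathcal A(\nabla u_i)$ (the paper expands $\mathcal A(\nabla u_i)(\phi_i\nabla\zeta_i-\zeta_i\nabla\phi_i)\cdot(\phi_i\nabla\zeta_i-\zeta_i\nabla\phi_i)\ge 0$, which is your Cauchy--Schwarz step), and then sum over $i$ and symmetrize via the arithmetic--geometric mean inequality using $\partial_j H_i(u)\phi_i\phi_j>0$. Your $\varepsilon$-regularization is harmless but not needed: taking $i=j$ in the stability condition gives $\partial_i H_i(u)\phi_i^2>0$, so each $\phi_i$ is nowhere zero and the division by $\phi_i$ is already legitimate.
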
  
\begin{proof}   Since $u$ is a stable solutions,  there exists a sequence $\phi=(\phi_i)_i^m$ that satisfies (\ref{L}).  Consider a test function  $\zeta=(\zeta_i)_i^m$ where $ \zeta_i \in L^{\infty} (\mathbb R^n) \cap H^1(\mathbb R^n)$ with compact support  and multiply both sides of (\ref{L}) with  $\frac{\zeta_i^2}{\phi_i}$. Integrating by parts we get 
\begin{equation}\label{IiJi}
 I_i:= \sum_{j=1}^{m}  \int_{\mathbb R^n}  \partial_j H_i(u) \phi_j \frac{\zeta_i^2}{\phi_i} = \int_{\mathbb R^n}  \mathcal A(\nabla u_i) \nabla \phi_i \cdot  \left(2 \nabla \zeta_i \frac{\zeta_i}{\phi_i} - \nabla \phi_i \frac{\zeta^2_i}{\phi_i^2}\right) =:J_i.
  \end{equation}
The fact that $\mathcal A(\nabla u_i)$ is positive definite we get 
\begin{eqnarray*}
0&\le& \mathcal A(\nabla u_i) (\phi_i \nabla \zeta_i- \zeta_i \nabla \phi_i)\cdot (\phi_i \nabla \zeta_i- \zeta_i \nabla \phi_i)
\\&=& \phi_i^2 \mathcal A(\nabla u_i) \nabla \zeta_i\cdot \nabla \zeta_i + \zeta_i^2 \mathcal A(\nabla u_i) \nabla \phi_i \cdot \nabla \phi_i - \zeta_i \phi_i \mathcal A(\nabla u_i)  \nabla \phi_i \cdot \nabla \zeta_i. 
\end{eqnarray*}
Applying  this to (\ref{IiJi}) for each $i$ we obtain
 \begin{equation}\label{stJ}
 J_i \le \int_{\mathbb R^n}  \mathcal A(\nabla u_i) \nabla \zeta_i \cdot  \nabla \zeta_i. 
 \end{equation}
 For the left-hand side we have,
 \begin{eqnarray*}
  \sum_{i=1}^{m} I_i&=&  \sum_{i,j=1}^{m}   \int_{\mathbb R^{n}} \partial_j H_i(u) \phi_j \frac{\zeta_i^2}{\phi_i}
\\& =&   \sum_{i<j}^{m}   \int_{\mathbb R^{n}} \partial_j H_i(u) \phi_j \frac{\zeta_i^2}{\phi_i} + \sum_{i>j}^{n}   \int_{\mathbb R^{n}} \partial_j H_i(u) \phi_j \frac{\zeta_i^2}{\phi_i} +  \sum_{i=1}^{m}  \int_{\mathbb R^{n}} \partial_i H_i(u) {\zeta_i^2}
\\&=&
  \sum_{i<j}^{m}   \int_{\mathbb R^{n}} \partial_j H_i(u) \phi_j \frac{\zeta_i^2}{\phi_i} + \sum_{i<j}^{m}   \int_{\mathbb R^{n}} \partial_i H_j(u) \phi_i \frac{\zeta_j^2}{\phi_j} + \sum_{i=1}^{m}  \int_{\mathbb R^{n}}  \partial_i H_i(u) {\zeta_i^2}
  \\&=& \sum_{i<j}^{m}   \int_{\mathbb R^{n}}  \left( \partial_j H_i(u) \phi_j \frac{\zeta_i^2}{\phi_i} + \partial_i H_j(u) \phi_i \frac{\zeta_j^2}{\phi_j}  \right)+\sum_{i=1}^{m}   \int_{\mathbb R^{n}} \partial_i H_i(u) {\zeta_i^2}
  \\&\ge & 2 \sum_{i<j}^{m}   \int_{\mathbb R^{n}}   \sqrt{\partial_j H_i(u) \partial_i H_j(u)} \zeta_i \zeta_j+  \sum_{i=1}^{m}  \int_{\mathbb R^{n}} \partial_i H_i(u) {\zeta_i^2}
  \\&=&\sum_{i,j=1}^{m}   \int_{\mathbb R^{n}} \sqrt{\partial_j H_i(u) \partial_i H_j(u)} \zeta_i\zeta_j.
  \end{eqnarray*}
This finishes the proof. 

\end{proof}  

We now apply the stability inequality to provide a geometric Poincar\'{e} inequality of the following from. For the case of scalar equations that is when $m=1$ this inequality was driven by Sternberg-Zumbrun in \cite{sz} and it was applied in this context by  Farina-Sciunzi-Valdinoci \cite{fsv} and references therein to provide De Giorgi type results.   Note also that Cabr\'{e} applied this type inequality to prove regularity of extremal solutions of nonlinear eigenvalue problems in \cite{cab}. For the case of system of equations that is when $m\ge 1$ this inequality was first proved by Ghoussoub and the author in \cite{fg} and they applied the inequality to conclude De Giorgi type results for system of equations. Let us mention that interested readers can find similar geometric Poincar\'{e} inequalities in these references as well  \cite{f2,dp,d,sv}.

\begin{thm}\label{lempoin}
 Assume that  $m,n\ge 1$ and $ u=(u_i)_{i=1}^m$ is a stable solution of (\ref{main}).  Then, for any $\eta=(\eta_k)_{k=1}^m \in C_c^1(\mathbb R^n)$, the following inequality holds;
\begin{eqnarray}\label{poincare}
&&
\sum_{i\neq j} \int_{\mathbb R^{n}}   \left[  \sqrt{\partial_{j} H_i( u) \partial_{ i} H_j( u) }  |\nabla  u_i|  |\nabla  u_j| \eta_i \eta_j   - \partial_{j}H_i(u)  \nabla  u_i \cdot   \nabla  u_j \eta_i^2 \right] 
\\&& \nonumber+
\sum_{i=1}^m    \int_{    \{|\nabla u_i|\neq 0 \} \cap \mathbb R^n }  \Phi'(|\nabla u_i|^2)   |\nabla u_i|^2 \mathcal{K}_i^2 \eta_i^2 
\\&& \nonumber +\sum_{i=1}^m    \int_{    \{|\nabla u_i|\neq 0 \} \cap \mathbb R^n } \left[  2 \Phi''(|\nabla u_i|^2) |\nabla u_i^2|  +\Phi'(|\nabla u_i|^2)  \right] | \nabla_{T_i} |\nabla  u_i| |^2   \eta_i^2
\\&\le& \nonumber\sum_{i=1}^{m}  \int_{\mathbb R^{n}}  2 |\nabla u_i|^2 \Phi''(|\nabla u_i|^2) |\nabla u_i\cdot \nabla\eta_i|^2 +\Phi'(|\nabla u_i|^2) |\nabla u_i|^2 |\nabla \eta_i|^2,
  \end{eqnarray} 
 where $\nabla_{T_i}$ stands for the tangential gradient along a given level set of $u_i$ and 
$\mathcal{K}_i^2$ for the sum of  squares of  principal curvatures of such a level set.
\end{thm}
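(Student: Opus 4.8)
The plan is to combine the stability inequality of Lemma~\ref{stabineqphi} with a pointwise geometric identity obtained by differentiating the system, adapting to the anisotropic matrix $\mathcal A$ the scheme of Sternberg--Zumbrun and Farina--Sciunzi--Valdinoci \cite{fsv} and, for systems, of Ghoussoub and the author \cite{fg}. First I would differentiate the $i$-th equation of \eqref{main} with respect to $x_k$; since $\partial_{x_k}\big(\Phi'(|\nabla u_i|^2)\nabla u_i\big)=\mathcal A(\nabla u_i)\nabla(\partial_{x_k}u_i)$ by \eqref{aij1}, the function $v_{i,k}:=\partial_{x_k}u_i$ solves the linearized system $-\div\big(\mathcal A(\nabla u_i)\nabla v_{i,k}\big)=\sum_{j}\partial_j H_i(u)\,v_{j,k}$ in $\mathbb R^n$. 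Testing this with $v_{i,k}\eta_i^2$, integrating by parts, summing over $k=1,\dots,n$ and $i=1,\dots,m$, and using $\sum_k v_{i,k}v_{j,k}=\nabla u_i\cdot\nabla u_j$ and $\sum_k v_{i,k}\nabla v_{i,k}=\tfrac12\nabla(|\nabla u_i|^2)$, I would obtain
\[
\sum_{i}\int_{\mathbb R^n}\Big(\sum_{k}\mathcal A(\nabla u_i)\nabla v_{i,k}\cdot\nabla v_{i,k}\Big)\eta_i^2+\sum_{i}\int_{\mathbb R^n}\eta_i\,\mathcal A(\nabla u_i)\nabla(|\nabla u_i|^2)\cdot\nabla\eta_i=\sum_{i,j}\int_{\mathbb R^n}\partial_j H_i(u)\,(\nabla u_i\cdot\nabla u_j)\,\eta_i^2 .
\]

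The heart of the proof is the pointwise identity, valid on the open set $\{|\nabla u_i|\neq 0\}$,
\[
\sum_{k}\mathcal A(\nabla u_i)\nabla v_{i,k}\cdot\nabla v_{i,k}=\mathcal A(\nabla u_i)\nabla|\nabla u_i|\cdot\nabla|\nabla u_i|+\big(2\Phi''(|\nabla u_i|^2)|\nabla u_i|^2+\Phi'(|\nabla u_i|^2)\big)|\nabla_{T_i}|\nabla u_i||^2+\Phi'(|\nabla u_i|^2)|\nabla u_i|^2\,\mathcal K_i^2 .
\]
To establish it I would apply the quadratic-form expansion $\mathcal A(\eta)\xi\cdot\xi=2\Phi''(|\eta|^2)|\eta\cdot\xi|^2+\Phi'(|\eta|^2)|\xi|^2$ from \eqref{Aetazeta}, taking $\eta=\nabla u_i$ and letting $\xi$ range over the columns of the Hessian $\nabla^2u_i$, so that $\sum_k|\xi|^2=|\nabla^2u_i|^2$ and $\sum_k|\eta\cdot\xi|^2=|\nabla u_i|^2\,|\nabla|\nabla u_i||^2$; the same expansion applied to $\nabla|\nabla u_i|$ uses $\nabla u_i\cdot\nabla|\nabla u_i|=|\nabla u_i|\,\partial_{\nu_i}|\nabla u_i|$ with $\nu_i=\nabla u_i/|\nabla u_i|$. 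The remaining discrepancy is supplied by the classical identity $|\nabla^2u_i|^2-|\nabla|\nabla u_i||^2=|\nabla_{T_i}|\nabla u_i||^2+|\nabla u_i|^2\mathcal K_i^2$, obtained by expressing $\nabla^2u_i$ in an orthonormal frame adapted to the level set $\{u_i=\mathrm{const}\}$, whose normal-normal, normal-tangential and tangential-tangential blocks are $\partial_{\nu_i}|\nabla u_i|$, $\nabla_{T_i}|\nabla u_i|$ and $|\nabla u_i|$ times the second fundamental form (of squared norm $\mathcal K_i^2$). On the complementary set $\{|\nabla u_i|=0\}$ one has $\nabla^2u_i=0$ and $\nabla|\nabla u_i|=0$ almost everywhere, so it contributes nothing to any of the integrals.

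Finally I would use the stability inequality \eqref{stability} with the admissible test function $\zeta_i=|\nabla u_i|\,\eta_i$, whose right-hand side expands as
\[
\sum_{i}\int_{\mathbb R^n}\Big[\eta_i^2\,\mathcal A(\nabla u_i)\nabla|\nabla u_i|\cdot\nabla|\nabla u_i|+\eta_i\,\mathcal A(\nabla u_i)\nabla(|\nabla u_i|^2)\cdot\nabla\eta_i+|\nabla u_i|^2\,\mathcal A(\nabla u_i)\nabla\eta_i\cdot\nabla\eta_i\Big].
\]
Inserting the geometric identity into the first term makes its first two terms equal to the left side of the integrated identity above, hence to $\sum_{i,j}\int\partial_j H_i(u)(\nabla u_i\cdot\nabla u_j)\eta_i^2$, minus the two nonnegative integrals $\sum_i\int(2\Phi''(|\nabla u_i|^2)|\nabla u_i|^2+\Phi'(|\nabla u_i|^2))|\nabla_{T_i}|\nabla u_i||^2\eta_i^2$ and $\sum_i\int\Phi'(|\nabla u_i|^2)|\nabla u_i|^2\mathcal K_i^2\eta_i^2$ (nonnegative since $\Phi'>0$ and $2s\Phi''(s)+\Phi'(s)>0$). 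Moving these two to the left-hand side, cancelling the diagonal contributions $\partial_i H_i(u)|\nabla u_i|^2\eta_i^2$ against the $i=j$ part of the left side of \eqref{stability}, and using $\mathcal A(\nabla u_i)\nabla\eta_i\cdot\nabla\eta_i=2\Phi''(|\nabla u_i|^2)|\nabla u_i\cdot\nabla\eta_i|^2+\Phi'(|\nabla u_i|^2)|\nabla\eta_i|^2$, yields precisely \eqref{poincare}. I expect the main obstacle to be the second step: justifying the level-set geometry on and near the critical set $\{\nabla u_i=0\}$, and justifying the differentiation of the equation and the integrations by parts with only the regularity available for a possibly degenerate quasilinear operator. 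I would handle this by first deriving the inequality on the open sets $\{|\nabla u_i|>\delta\}$, where elliptic regularity makes $u_i$ smooth, and then letting $\delta\to 0$, using that $\nabla^2u_i=0$ almost everywhere on $\{\nabla u_i=0\}$.
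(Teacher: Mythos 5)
Your proposal is correct and follows essentially the same route as the paper's proof: you test the stability inequality of Lemma \ref{stabineqphi} with $\zeta_i=|\nabla u_i|\eta_i$, differentiate the $i$-th equation in $x_k$ using $\partial_{x_k}\big(\Phi'(|\nabla u_i|^2)\nabla u_i\big)=\mathcal A(\nabla u_i)\nabla\partial_{x_k}u_i$, test with $\partial_{x_k}u_i\,\eta_i^2$, observe the cancellation of the cross term $\int\eta_i\,\mathcal A(\nabla u_i)\nabla|\nabla u_i|^2\cdot\nabla\eta_i$, and invoke the Sternberg--Zumbrun level-set identities, exactly as in the paper. The only cosmetic difference is that you package the geometric step as a single pointwise identity (with a careful remark about the critical set $\{\nabla u_i=0\}$) before integrating, whereas the paper combines the two integral expressions first and then applies (\ref{levelset1})--(\ref{levelset2}).
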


\begin{proof} Suppose that $\eta=(\eta_1,...,\eta_m)$ for $\eta_i\in C^1_c(\mathbb R^n)$ is a test function.  Test the stability inequality (\ref{stability}) with $\zeta_i=|\nabla u_i|\eta_i$  to get 
 \begin{eqnarray}\label{stablepoin}
   \sum_{i=1}^m \int_{\mathbb R^n}  \partial_i H_i(u) |\nabla u_i|^2\eta_i^2 &\le& -\sum_{i\neq j} \int_{\mathbb R^n} \sqrt{\partial_j H_i(u) \partial_i H_j(u)} |\nabla u_i| |\nabla u_j|\eta_i\eta_j 
   \\&&+\nonumber  \sum_{i=1}^m \int_{\mathbb R^n}|\nabla u_i|^2 \mathcal A(\nabla u_i)  \nabla \eta_i \cdot \nabla \eta_i 
\\&&\nonumber +  \sum_{i=1}^m \int_{\mathbb R^n} \eta_i^2 \mathcal A(\nabla u_i) \nabla |\nabla u_i|\cdot \nabla |\nabla u_i| 
\\&&\nonumber
+  \frac{1}{2}  \sum_{i=1}^m \int_{\mathbb R^n}   \mathcal A(\nabla u_i) \nabla |\nabla u_i|^2 \cdot \nabla \eta_i^2 .
  \end{eqnarray} 
  Straightforward calculations show that for each $k$,  
  \begin{equation}\label{partialk}
\partial_{x_k} \left( \Phi'(|\nabla u_i|^2) \nabla u_i \right)= \mathcal A(\nabla u_i) \nabla \partial_{x_k} u_i.  
  \end{equation}
  Applying this and differentiating the $i^{th}$ equation of (\ref{main}) with respect to $x_k$ for each $i=1,2,...,m$ we get 
  \begin{equation}\label{pdepartialk}
-\div\left(  \mathcal A(\nabla u_i) \nabla \partial_{x_k} u_i   \right) = \sum_{j=1}^m \partial_j H_i(u) \partial_{x_k} u_i .
    \end{equation}
Multiplying the above with $\eta_i^2 \partial_k u_i $,  integrating by parts and taking sum on the indices $i,k$ we get 
 \begin{eqnarray}\label{iHi}
&& \sum_{i=1}^m  \int_{\mathbb R^n}  \partial_i H_i(u) |\nabla u_i|^2\eta_i^2
 \\&=& \nonumber -\sum_{j\neq i} \int_{\mathbb R^n} \partial_j H_i(u) \nabla u_i\cdot \nabla u_j \eta_i^2   + \sum_{i=1}^m \sum_{k=1}^n  \int_{\mathbb R^n}  \mathcal A(\nabla u_i) 
  \nabla (\partial_{x_k} u_i)\cdot \nabla (\partial_{x_k} \eta_i^2)
  \\&=& \nonumber -\sum_{j\neq i} \int_{\mathbb R^n} \partial_j H_i(u) \nabla u_i\cdot \nabla u_j \eta_i^2 +\sum_{i=1}^m \sum_{k=1}^n  \int_{\mathbb R^n} \eta_i^2 \mathcal A(\nabla u_i) 
  \nabla (\partial_{x_k} u_i)\cdot \nabla (\partial_{x_k} u_i)  
  \\&&\nonumber +  \frac {1}{2} \sum_{i=1}^m  \int_{\mathbb R^n} \mathcal A(\nabla u_i) 
  \nabla |\nabla u_i|^2 \cdot \nabla  \eta_i^2.
   \end{eqnarray} 
  Equating (\ref{iHi}) and (\ref{stablepoin}) we get the following since the term $ \frac 1 2 \sum_{k=1}^m  \int_{\mathbb R^n} \mathcal A(\nabla u_i) \nabla |\nabla u_i|^2 \cdot \nabla  \eta_i^2$ cancels out,  
   \begin{eqnarray}\label{iHij}
&&\sum_{i\neq j} \int_{\mathbb R^n}\left[ \sqrt{\partial_j H_i(u) \partial_i H_j(u)} |\nabla u_i| |\nabla u_j|\eta_i\eta_j -  \partial_j H_i(u) \nabla u_i\cdot \nabla u_j \eta_i^2\right]
\\&&  \label{etakk}+  \sum_{i,k=1}^m \int_{\mathbb R^n}  \mathcal A(\nabla u_i) 
  \nabla (\partial_{x_k} u_i)\cdot \nabla (\partial_{x_k} u_i) \eta_i^2
  \\&& \label{etanab} -  \sum_{i=1}^m \int_{\mathbb R^n}  \mathcal A(\nabla u_i) \nabla |\nabla u_i|\cdot \nabla |\nabla u_i|  \eta_i^2
  \\&\le&  \sum_{i=1}^m \int_{\mathbb R^n}|\nabla u_i|^2 \mathcal A(\nabla u_i)  \nabla \eta_i \cdot \nabla \eta_i .
      \end{eqnarray} 
For the rest of the proof, we simplify two terms (\ref{etakk}) and (\ref{etanab}) in the left-hand side of the above inequality.   From the definition of there matrix $\mathcal A$, in the light of (\ref{Aetazeta}), we get 
   \begin{eqnarray}\label{Akk}
\sum_{k=1}^n  \mathcal A(\nabla u_i) \nabla (\partial_{x_k} u_i)\cdot \nabla (\partial_{x_k} u_i)
  &=& 2 \Phi''(|\nabla u_i|^2)   \sum_{k=1}^n | \nabla u_i\cdot \nabla \partial_{x_k} u_i |^2
  \\&& \nonumber + \Phi'(|\nabla u_i|^2) \sum_{k=1}^n | \nabla \partial_{x_k} u_i  |^2. 
         \end{eqnarray} 
Straightforward calculations show that 
   \begin{equation}\label{Akk1}
\sum_{k=1}^n | \nabla u_i\cdot \nabla \partial_{x_k} u_i  |^2= \frac{1}{4} \left|  \nabla |\nabla u_i|^2  \right|^2 =|\nabla u_i|^2 \left|  \nabla |\nabla u_i|  \right|^2 . 
         \end{equation} 
From (\ref{Akk}) and (\ref{Akk1}) we obtain the following form for the term in (\ref{etakk}) 
 \begin{eqnarray}\label{Akk2}
&& \nonumber \sum_{i=1}^m \sum_{k=1}^n \int_{\mathbb R^n}   \mathcal A(\nabla u_i) \nabla (\partial_{x_k} u_i)\cdot \nabla (\partial_{x_k} u_i) \eta_i^2
 \\ &=&  \sum_{i=1}^m \int_{\mathbb R^n}   2 |\nabla u_i|^2  \Phi''(|\nabla u_i|^2)  \left|  \nabla |\nabla u_i|  \right|^2 \eta_i^2
  \\&& \nonumber + \sum_{i=1}^m  \int_{\mathbb R^n}   \Phi'(|\nabla u_i|^2) \sum_{k=1}^n | \nabla \partial_{x_k} u_i  |^2 \eta_i^2. 
         \end{eqnarray} 
Similarly, from the definition of the matrix $\mathcal A$, i.e. using (\ref{Aetazeta}), we get 
\begin{eqnarray}\label{Auu}
\nonumber  \mathcal A(\nabla u_i) \nabla |\nabla u_i|\cdot \nabla |\nabla u_i| &=& 2 \Phi''(|\nabla u_i|^2) \left| \nabla u_i\cdot  \nabla |\nabla u_i| \right|^2
\\&&+ \Phi'(|\nabla u_i|^2) \left| \nabla |\nabla u_i| \right|^2.
\end{eqnarray}
This implies that the term in (\ref{etanab}) is of the from 
\begin{eqnarray}\label{Auu1}
&& \nonumber \sum_{i=1}^m \int_{\mathbb R^n}  \mathcal A(\nabla u_i) \nabla |\nabla u_i|\cdot \nabla |\nabla u_i| \eta_i^2 
\\&=& \sum_{i=1}^m \int_{\mathbb R^n} 2 \Phi''(|\nabla u_i|^2) \left| \nabla u_i\cdot  \nabla |\nabla u_i| \right|^2 \eta_i^2 
\\&&\nonumber +\sum_{i=1}^m \int_{\mathbb R^n} \Phi'(|\nabla u_i|^2) \left| \nabla |\nabla u_i| \right|^2 \eta_i^2.
\end{eqnarray}
The difference of (\ref{Akk2}) and (\ref{Akk}), as appeared in (\ref{etanab}) and (\ref{etakk}), is  
\begin{eqnarray}\label{Auu2}
&&\sum_{i=1}^m \sum_{k=1}^n  \int_{\mathbb R^n}   \mathcal A(\nabla u_i) \nabla (\partial_{x_k} u_i)\cdot \nabla (\partial_{x_k} u_i) \eta_i^2
\\&&-\sum_{i=1}^m \int_{\mathbb R^n}  \mathcal A(\nabla u_i) \nabla |\nabla u_i|\cdot \nabla |\nabla u_i| \eta_i^2 
\\&=& \label{uT}  2 \sum_{i=1}^m  \int_{   \Omega  } \eta_i^2   |\nabla u_i|^2 
 \left[ \left|  \nabla |\nabla u_i|  \right|^2- \frac{1}{|\nabla u_i|^2}  \left| \nabla u_i\cdot  \nabla |\nabla u_i| \right|^2\right]   \Phi''(|\nabla u_i|^2) 
\\&&\label{uT1}+ \sum_{i=1}^m  \int_{   \Omega  } \Phi'(|\nabla u_i|^2) \left[ \sum_{k=1}^n | \nabla \partial_{x_k} u_i  |^2 - \left| \nabla |\nabla u_i| \right|^2 \right] \eta_i^2 .
\end{eqnarray}
where $\Omega=\{|\nabla u_i|\neq 0 \} \cap \mathbb R^n$.  We now simplify (\ref{uT}) and (\ref{uT1}) via applying the tangential gradient and curvatures. Suppose that $|\nabla u_i|\neq 0$ at a point $x\in\mathbb R^n$, then 
\begin{equation}\label{levelset1}
|\nabla_T |\nabla u_i||^2= \left|  \nabla |\nabla u_i|  \right|^2- \frac{1}{|\nabla u_i|^2}  \left| \nabla u_i\cdot  \nabla |\nabla u_i| \right|^2,
\end{equation}
 where $\nabla_T$ denotes the orthogonal projection of the gradient along this level set. In addition, according to formula (2.1) given in  \cite{sz}, the following geometric identity between the tangential gradients and curvatures holds, 
 \begin{equation}\label{levelset2}
 \sum_{k=1}^{n} |\nabla \partial_k u_i|^2-|\nabla|\nabla u_i||^2= |\nabla u_i|^2 \mathcal K^2_i+|\nabla_T|\nabla u_i||^2 ,
 \end{equation}
 for $\mathcal K^2_i:= \sum_{l=1}^{n-1} \mathcal{\kappa}_l^2 $ where $ \mathcal{\kappa}_l$ is the principal curvatures of the level set of $u_i$ at $x$.  Substituting (\ref{levelset1}) and (\ref{levelset2}) in (\ref{uT}) and (\ref{uT1}) we get 
 \begin{eqnarray}\label{Auu3}
&&\sum_{i=1}^m \sum_{k=1}^n  \int_{\mathbb R^n}   \mathcal A(\nabla u_i) \nabla (\partial_{x_k} u_i)\cdot \nabla (\partial_{x_k} u_i) \eta_i^2
\\&&\nonumber  -\sum_{i=1}^m \int_{\mathbb R^n}  \mathcal A(\nabla u_i) \nabla |\nabla u_i|\cdot \nabla |\nabla u_i| \eta_i^2 
\\&=& \nonumber \sum_{i=1}^m  \int_{    \{|\nabla u_i|\neq 0 \} \cap \mathbb R^n }  2 |\nabla u_i|^2 \Phi''(|\nabla u_i|^2) |\nabla_T |\nabla u_i||^2 \eta_i^2 
\\&&\nonumber + \sum_{i=1}^m  \int_{    \{|\nabla u_i|\neq 0 \} \cap \mathbb R^n } \Phi'(|\nabla u_i|^2) \left[ |\nabla u_i|^2 +|\nabla_T|\nabla u_i||^2 \right] \eta_i^2 .
\end{eqnarray}
 Finally, substitution of (\ref{Auu3}) in (\ref{etanab}) and (\ref{etakk}) completes the proof.    
 
\end{proof}

\section{De Giorgi type results for symmetric systems} \label{secde}
In this section, we provide One dimensional symmetry results for stable and $H$-monotone solutions of symmetric system (\ref{main}) in lower dimensions with a general nonlinearity.     At first let us fix a few notations.  Throughout this section we suppose that   $\zeta=(\zeta_i)_{i=1}^m$ is a sequence of test functions where $\zeta_i\in C^2_c(\mathbb R^n)\cap[0,1]$ where $\zeta_i\equiv1$ in $B_1$ and $\zeta_i \equiv 0$ in $\mathbb R^n \setminus B_2$. Set ${}_R \zeta_{i}(x):=\zeta_i(\frac{x}{R})$ and 
${}_R \Gamma=({}_R\Gamma_i)_{i=1}^m$  where ${}_R \Gamma_i(x):=\nabla \zeta_i(\frac{x}{R}) $ for any $R>1$.  Note that $||{}_R \Gamma_i(x)||_{L^\infty(\mathbb R^n)}\le C$ where $C$ is independent  from $R$ and  $\nabla {}_R\zeta_i(x)=R^{-1}{}_R\Gamma_i (x)$.  

To set up a Liouville theorem for the quotient of partial derivatives of solutions of (\ref{main}), we first state the following technical lemma.

\begin{lemma}\label{lmlinear}
Suppose that $u=(u_i)_{i=1}^m$ is a $H$-monotone solution of (\ref{main}). Set $\phi_i:=\partial_{x_n} u_i$ and $\psi_i:= \nabla u_i\cdot \eta$ where $\eta=(\eta',0)\in\mathbb R^{n-1}\times \{0\}$ and define $\sigma_i:=\frac{\psi_i}{\phi_i}$. Then the sequence of functions $\sigma=(\sigma_i)_{i=1}^m$ satisfies 
\begin{eqnarray}\label{insigma1}
&& \div\left[  \phi_i^2 \mathcal A(\nabla u_i) \nabla \sigma_i \right] 
+\sum_{j=1}^{m} \partial_j H_{i} (u) \phi_i \phi_j (\sigma_j-\sigma_i)\sigma_i = 0   \ \ \text{in}\ \ \mathbb {R}^n.
  \end{eqnarray}
\end{lemma}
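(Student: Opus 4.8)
The plan is to derive the linearized equation satisfied by $\sigma_i = \psi_i/\phi_i$ directly from the two linear equations satisfied by $\phi_i$ and $\psi_i$, in the spirit of the classical computation used for the scalar case (and as in Lemma~\ref{lmlinear}'s analogue in \cite{fg} for the Laplacian). First I would record that, since $u=(u_i)_{i=1}^m$ solves (\ref{main}), differentiating the $i$-th equation with respect to $x_n$ and using (\ref{partialk}) gives that $\phi_i = \partial_{x_n} u_i$ satisfies
\begin{equation*}
-\div\left( \mathcal A(\nabla u_i)\nabla \phi_i\right) = \sum_{j=1}^m \partial_j H_i(u)\,\phi_j \quad\text{in }\mathbb R^n,
\end{equation*}
and that, since $\eta=(\eta',0)$ is a constant vector, differentiating (\ref{main}) in the direction $\eta$ (a linear combination of the $\partial_{x_k}$, $k<n$) and again invoking (\ref{partialk}) shows that $\psi_i = \nabla u_i\cdot\eta$ satisfies the same linear equation
\begin{equation*}
-\div\left( \mathcal A(\nabla u_i)\nabla \psi_i\right) = \sum_{j=1}^m \partial_j H_i(u)\,\psi_j \quad\text{in }\mathbb R^n.
\end{equation*}
Here the $H$-monotonicity hypothesis is used only to guarantee $\phi_i=\partial_{x_n}u_i\neq0$ everywhere, so that $\sigma_i=\psi_i/\phi_i$ is a well-defined (locally bounded) function and the division below is legitimate.

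Next I would substitute $\psi_i=\sigma_i\phi_i$ into the $\psi_i$-equation and expand. Writing $\nabla\psi_i = \phi_i\nabla\sigma_i + \sigma_i\nabla\phi_i$ and using that $\mathcal A(\nabla u_i)$ is a fixed symmetric matrix field, one computes
\begin{equation*}
\div\left(\mathcal A(\nabla u_i)\nabla\psi_i\right) = \div\left(\phi_i\,\mathcal A(\nabla u_i)\nabla\sigma_i\right) + \div\left(\sigma_i\,\mathcal A(\nabla u_i)\nabla\phi_i\right),
\end{equation*}
and the last term expands as $\nabla\sigma_i\cdot\mathcal A(\nabla u_i)\nabla\phi_i + \sigma_i\,\div\left(\mathcal A(\nabla u_i)\nabla\phi_i\right)$. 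Meanwhile $\div\left(\phi_i\,\mathcal A(\nabla u_i)\nabla\sigma_i\right) = \phi_i\,\div\left(\mathcal A(\nabla u_i)\nabla\sigma_i\right) + \nabla\phi_i\cdot\mathcal A(\nabla u_i)\nabla\sigma_i$, so by symmetry of $\mathcal A$ the two cross terms $\nabla\sigma_i\cdot\mathcal A(\nabla u_i)\nabla\phi_i$ combine. The cleanest bookkeeping is to instead verify the identity
\begin{equation*}
\div\left(\phi_i^2\,\mathcal A(\nabla u_i)\nabla\sigma_i\right) = \phi_i\,\div\left(\mathcal A(\nabla u_i)\nabla\psi_i\right) - \psi_i\,\div\left(\mathcal A(\nabla u_i)\nabla\phi_i\right),
\end{equation*}
which is the analogue of the scalar Picone-type identity: expanding the left side as $\div\left(\phi_i\,\mathcal A\nabla\psi_i - \psi_i\,\mathcal A\nabla\phi_i\right)$ after writing $\phi_i^2\nabla\sigma_i = \phi_i\nabla\psi_i-\psi_i\nabla\phi_i$, and then distributing the divergence, the term $\nabla\phi_i\cdot\mathcal A\nabla\psi_i - \nabla\psi_i\cdot\mathcal A\nabla\phi_i$ vanishes by symmetry of $\mathcal A(\nabla u_i)$.

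Finally I would insert the two linear PDEs into the right-hand side of that identity:
\begin{equation*}
\div\left(\phi_i^2\,\mathcal A(\nabla u_i)\nabla\sigma_i\right) = -\phi_i\sum_{j=1}^m\partial_j H_i(u)\,\psi_j + \psi_i\sum_{j=1}^m\partial_j H_i(u)\,\phi_j = \sum_{j=1}^m\partial_j H_i(u)\,\phi_i\phi_j\left(\frac{\psi_i}{\phi_i}-\frac{\psi_j}{\phi_j}\right),
\end{equation*}
that is, $\div\left(\phi_i^2\,\mathcal A(\nabla u_i)\nabla\sigma_i\right) = \sum_{j=1}^m\partial_j H_i(u)\,\phi_i\phi_j(\sigma_i-\sigma_j)$, which on moving everything to one side is exactly (\ref{insigma1}) (note the statement has an extra factor $\sigma_i$ on the zeroth-order term, presumably a typo, or it reflects a later multiplication by $\sigma_i$; I would present the identity in the form above and match it to the stated equation). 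The only genuinely delicate point is regularity: I must make sure $\sigma_i$ is regular enough for these manipulations, i.e. that the quotient $\psi_i/\phi_i$ and its products with $\phi_i^2\mathcal A(\nabla u_i)$ lie in the right local Sobolev spaces so that the divergences are taken in the distributional sense; since $\phi_i$ is continuous and nonvanishing and $u_i\in C^2$, this is a routine but necessary check. Everything else is the algebraic identity above together with the two differentiated forms of (\ref{main}).
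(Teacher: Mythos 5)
Your proof is correct and supplies exactly the computation the paper omits (its proof of Lemma \ref{lmlinear} is left as "straightforward"): differentiate (\ref{main}) in the $x_n$ and $\eta$ directions using (\ref{partialk}), then use the quotient identity $\div\left(\phi_i^2\,\mathcal A(\nabla u_i)\nabla\sigma_i\right)=\phi_i\,\div\left(\mathcal A(\nabla u_i)\nabla\psi_i\right)-\psi_i\,\div\left(\mathcal A(\nabla u_i)\nabla\phi_i\right)$, where the cross terms cancel by symmetry of $\mathcal A(\nabla u_i)$; note the right-hand side of (\ref{pdepartialk}) in the paper should read $\partial_{x_k}u_j$, as you have it. Your remark about the stray $\sigma_i$ is also on target: as printed, (\ref{insigma1}) is off by a factor of $\sigma_i$, and the intended statement --- consistent with the hypothesis (\ref{insigma}) of Proposition \ref{liouville}, which carries $\sigma_i$ in front of the divergence --- is precisely your identity, either as you derived it or multiplied through by $\sigma_i$.
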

\begin{proof}
Since the proof is straightforward we omit it here. 
\end{proof}

The fact that $\sigma=(\sigma_i)_{i=1}^m$ satisfies  (\ref{insigma1}) motivates us to provide a Liouville theorem for system (\ref{insigma1}). Applying Caccioppoli type arguments we establish the following Liouvlle theorem for a slightly more general setting than (\ref{insigma1}).  Let us mention that for the case of scalar semilinear equation, $m=1$ and $\Phi(s)=s$, this type of Liouville theorem was noted by Berestycki, Caffarelli and Nirenberg in \cite{bcn} and used by Ghoussoub and Gui in \cite{gg1} and later by Ambrosio and Cabr\'{e} in \cite{ac}  to prove the De Giorgi conjecture in dimensions two and three.    Also, Ghoussoub and Gui in \cite{gg2} used a slightly stronger version to show that the De Giorgi's conjecture is true in dimensions four and five for a special class of solutions that satisfy an antisymmetry condition. We also refer interested readers to \cite{big} by Barlow,  Bass and  Gui  and to \cite{bar} by Barlow for some probability based arguments regarding this Liouvlle theorem.  

For the case of scalar quasilinear equation, $m=1$ and a general $\Phi$, this Liouville theorem is provided by  Farina, Sciunzi and Valdinoci in \cite{fsv} and by Danielli and  Garofalo  in  \cite{dg}. For the case of semilinear system of equations, $m\ge1$ and $\Phi(s)=s$, we refer to \cite{fg} by Ghoussoub and the author.

\begin{prop}\label{liouville} Assume that  for each  $i=1,\cdots,m$ functions $|\nabla u_i|$ and $\phi_i$ are locally bounded in $\mathbb{R}^n$ where $\phi^2_i >0 $ and $\sigma_i \in H^1_{loc}(\mathbb{R}^n)$.  Let  
\begin{equation}\label{liouassum}
\sum_{i=1}^{m}\int_{B_{2R}\setminus B_R}\phi_i^2\sigma_i^2
\mathcal A(\nabla u_i) {}_R\Gamma_i \cdot {}_R\Gamma_i \le C R^2 ,  
\end{equation}
 where the constant $C$ is independent from $R>1$.  Let $\sigma=(\sigma_i)_{i=1}^m$  satisfy
\begin{eqnarray}\label{insigma}
\label{div}
 \sigma_i \div\left[  \phi_i^2 \mathcal A(\nabla u_i) \nabla \sigma_i \right]  +\sum_{j=1}^{m} h_{ij}(x) f(\sigma_j-\sigma_i)\sigma_i \ge 0   \ \ \text{in}\ \ \mathbb {R}^n,
  \end{eqnarray}
where $0\le h_{ij}\in L_{loc}^1(\mathbb{R}^n)$, $h_{ij}=h_{ji}$ and $f\in L_{loc}^1(\mathbb{R})$ is an odd function such that $f(s)\ge 0$ for $s\in\mathbb R^+$.  Then,  each function $\sigma_i$ is constant for all  $i=1,...,m$.
\end{prop}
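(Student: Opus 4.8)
The plan is to run a Caccioppoli-type (Liouville) argument in the spirit of Berestycki--Caffarelli--Nirenberg \cite{bcn} and Ghoussoub--Gui \cite{gg1}, adapted to the coupled quasilinear system. First I would take a standard cutoff: fix the sequence ${}_R\zeta_i$ and multiply the differential inequality (\ref{insigma}) for each $i$ by the nonnegative weight ${}_R\zeta_i^2$, then sum over $i$ and integrate over $\mathbb R^n$ (everything is supported in $B_{2R}$, so all integrals are finite by the local boundedness hypotheses). Integrating the divergence term by parts moves one derivative off $\sigma_i$ and produces a good term $-\sum_i\int {}_R\zeta_i^2\,\phi_i^2\,\mathcal A(\nabla u_i)\nabla\sigma_i\cdot\nabla\sigma_i$ — which is $\le 0$ because $\mathcal A(\nabla u_i)$ is symmetric positive definite (as established just before Lemma \ref{stabineqphi}) and $\phi_i^2>0$ — plus a cross term $-2\sum_i\int {}_R\zeta_i\,\phi_i^2\,\sigma_i\,\mathcal A(\nabla u_i)\nabla\sigma_i\cdot\nabla{}_R\zeta_i$.

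The second step is to handle the zeroth-order coupling term $\sum_{i,j}\int {}_R\zeta_i^2\,h_{ij}(x)\,f(\sigma_j-\sigma_i)\,\sigma_i$. Here I would exploit the symmetry $h_{ij}=h_{ji}$ and the oddness of $f$: symmetrizing the double sum in $(i,j)$ rewrites this as $-\tfrac12\sum_{i,j}\int h_{ij}\,f(\sigma_j-\sigma_i)(\sigma_j-\sigma_i)\big({}_R\zeta_i^2+{}_R\zeta_j^2\big)/2$ up to the cutoff bookkeeping, or more cleanly, on the region $B_1$ where all ${}_R\zeta_i\equiv 1$ it is exactly $-\tfrac12\sum_{i,j}\int h_{ij}f(\sigma_j-\sigma_i)(\sigma_j-\sigma_i)$, which is $\le 0$ since $h_{ij}\ge 0$ and $s\mapsto f(s)s\ge0$ (as $f$ is odd with $f\ge 0$ on $\mathbb R^+$). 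So, modulo terms supported in the annulus $B_{2R}\setminus B_R$ that carry derivatives of the cutoff, both the gradient term and the coupling term have the right sign.

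The third step is the estimate of the cutoff cross term. By Cauchy--Schwarz in the quadratic form $\mathcal A(\nabla u_i)$ (which is legitimate since $\mathcal A$ is positive definite),
$$
\Big| 2\sum_i\int {}_R\zeta_i\,\phi_i^2\,\sigma_i\,\mathcal A(\nabla u_i)\nabla\sigma_i\cdot\nabla{}_R\zeta_i\Big|
\le \tfrac12\sum_i\int {}_R\zeta_i^2\,\phi_i^2\,\mathcal A(\nabla u_i)\nabla\sigma_i\cdot\nabla\sigma_i
+ 2\sum_i\int \phi_i^2\,\sigma_i^2\,\mathcal A(\nabla u_i)\nabla{}_R\zeta_i\cdot\nabla{}_R\zeta_i .
$$
The first piece is absorbed by the good term; the second, using $\nabla{}_R\zeta_i=R^{-1}{}_R\Gamma_i$ and the hypothesis (\ref{liouassum}), is bounded by $2R^{-2}\cdot CR^2 = 2C$, a constant independent of $R$. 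Putting the three steps together gives
$$
\tfrac12\sum_{i=1}^m\int_{B_R}\phi_i^2\,\mathcal A(\nabla u_i)\nabla\sigma_i\cdot\nabla\sigma_i \le C
$$
uniformly in $R$. Letting $R\to\infty$ by monotone convergence yields $\sum_i\int_{\mathbb R^n}\phi_i^2\,\mathcal A(\nabla u_i)\nabla\sigma_i\cdot\nabla\sigma_i<\infty$; a standard second pass — redo the computation keeping the good gradient term, and note the annulus contribution now tends to $0$ since it is the tail of a convergent integral — forces $\int_{\mathbb R^n}\phi_i^2\,\mathcal A(\nabla u_i)\nabla\sigma_i\cdot\nabla\sigma_i=0$ for each $i$. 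Since $\phi_i^2>0$ and $\mathcal A(\nabla u_i)$ is positive definite, $\nabla\sigma_i=0$ a.e., so each $\sigma_i$ is constant.

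The main obstacle is the bookkeeping in the coupling term once the cutoffs ${}_R\zeta_i$ are not identically $1$: the symmetrization $h_{ij}=h_{ji}$ is clean only when the test weights attached to indices $i$ and $j$ agree, so I would either take all $\zeta_i$ equal (which is harmless and allowed by the setup) or carry an extra error term $\sum_{i,j}\int h_{ij}|f(\sigma_j-\sigma_i)|\,|\sigma_i|\,|{}_R\zeta_i^2-{}_R\zeta_j^2|$ supported in the annulus and control it via the local $L^1$ bound on $h_{ij}$ together with the local bounds on $\sigma_i,\phi_i$ — this is where the hypotheses "$h_{ij}\in L^1_{loc}$", "$f\in L^1_{loc}$", and "$|\nabla u_i|$, $\phi_i$ locally bounded" are genuinely used. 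The only other subtlety is justifying the integrations by parts and the passage to the limit at the regularity stated ($\sigma_i\in H^1_{loc}$), which is routine given that all integrands are compactly supported at each finite $R$.
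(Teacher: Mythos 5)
Your proposal follows essentially the same Caccioppoli scheme as the paper's proof: multiply (\ref{insigma}) by ${}_R\zeta_i^2$, integrate by parts, kill the coupling term by symmetrizing in $(i,j)$ using $h_{ij}=h_{ji}$, the oddness of $f$ and $sf(s)\ge 0$, and control the cutoff cross term through hypothesis (\ref{liouassum}) together with $\nabla {}_R\zeta_i=R^{-1}{}_R\Gamma_i$. Your remark that the symmetrization is only clean when the same cutoff is attached to every index, and your choice to take all $\zeta_i$ equal, is correct and in fact more careful than the paper's own display, which writes the coupling term without the cutoffs; with one common cutoff the paper's pointwise inequality $\sum_{i,j}h_{ij}\sigma_i f(\sigma_j-\sigma_i)\le 0$ makes the cutoff-weighted coupling term nonpositive with no annulus error term, so your worry in the last paragraph disappears.

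The one place where your argument, as written, does not deliver the conclusion is the final step. With the Young inequality at a fixed parameter (your $\le \tfrac12(\text{gradient term})+2(\text{cutoff term})$), the term $2\sum_i\int \phi_i^2\sigma_i^2\,\mathcal A(\nabla u_i)\nabla{}_R\zeta_i\cdot\nabla{}_R\zeta_i$ is bounded by the constant $2C$ from (\ref{liouassum}) and does \emph{not} tend to zero as $R\to\infty$; only the absorbed half of the annulus gradient integral decays. Hence your ``second pass'' reproduces $\sum_i\int_{\mathbb R^n}\phi_i^2\,\mathcal A(\nabla u_i)\nabla\sigma_i\cdot\nabla\sigma_i\le 2C$, i.e.\ the finiteness you already had, not the vanishing of the integral. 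To force it to vanish you need a free parameter: either estimate the cross term multiplicatively by Cauchy--Schwarz, so the right-hand side is the product of the annulus gradient tail (which tends to $0$ once finiteness is known) and the factor $\bigl(\sum_i\int\phi_i^2\sigma_i^2\,\mathcal A(\nabla u_i)\nabla{}_R\zeta_i\cdot\nabla{}_R\zeta_i\bigr)^{1/2}\le \sqrt{C}$, or argue as the paper does in (\ref{gggg}): keep $\epsilon$ arbitrary, send $R\to\infty$ at fixed $\epsilon$ (the $\epsilon$-weighted annulus term vanishes as a tail) to obtain $\int_{\mathbb R^n}\le C/\epsilon$, and then let $\epsilon\to\infty$. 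With that correction, your concluding step (positive definiteness of $\mathcal A(\nabla u_i)$ and $\phi_i^2>0$ giving $\nabla\sigma_i\equiv 0$) is exactly the paper's.
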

\begin{proof}  The proof is strongly motived by the ideas and methods used in \cite{fg,mf,ac,aac,fsv}. Note that 
\begin{eqnarray*}
\sum_{i,j=1}^m h_{ij}(x) \sigma_i f(\sigma_j-\sigma_i)&=& \sum_{i< j}  h_{ij} \sigma_i f(\sigma_j-\sigma_i) + \sum_{i> j} h_{ij}  \sigma_i  f(\sigma_j-\sigma_i)  \\&=&\sum_{i< j} h_{ij}  \sigma_i f(\sigma_j-\sigma_i)  + \sum_{i< j}h_{ij} \sigma_j f(\sigma_i-\sigma_j)   \ \ \text{since} \ \ h_{ij}=h_{ji}
\\&=&-\sum_{i< j} h_{ij} (\sigma_j-\sigma_i)f(\sigma_j-\sigma_i)  \ \ \text{since $f$ is odd}.
\\&\le &0 \ \ \text{since $h_{ij} \ge 0$ and $sf(s)\ge 0$ for any $s\in \mathbb R$}.
  \end{eqnarray*}
Multiply (\ref{insigma})  with a text function ${{}_R\zeta_i}^2$  and perform integration  by parts to get 
\begin{eqnarray*}
 \sum_{i=1}^{m}\int_{\mathbb R^n} \phi_i^2
\mathcal A(\nabla u_i) \nabla (\sigma_i {}_R\zeta_i^2 ) \cdot \nabla \sigma_i \le  \sum_{i,j=1}^m \int_{\mathbb R^n} h_{ij}(x) \sigma_i f(\sigma_j-\sigma_i) \le  0.
  \end{eqnarray*}
Therefore, 
\begin{equation}\label{phizeta}
\sum_{i=1}^{m}\int_{\mathbb R^n} \phi_i^2 {}_R\zeta_i^2
\mathcal A(\nabla u_i) \nabla  \sigma_i \cdot \nabla \sigma_i \le -2 \sum_{i=1}^{m} \int_{B_{2R}\setminus B_R} \phi_i^2  \sigma_i {}_R\zeta_i    \mathcal A(\nabla u_i) \nabla {}_R \zeta_i  \cdot \nabla \sigma_i.
  \end{equation}
In the light of the Cauchy inequality with epsilon, one can see that  for any $\epsilon>0$,  
$$ 2|\sigma_i| {}_R\zeta_i    |\mathcal A(\nabla u_i) \nabla  {}_R\zeta_i  \cdot \nabla \sigma_i | \le  \epsilon {}_R \zeta_i^2  \mathcal A(\nabla u_i) \nabla  \sigma_i  \cdot \nabla \sigma_i +  \frac{1}{\epsilon} \sigma_i^2  \mathcal A(\nabla u_i) \nabla  {}_R\zeta_i  \cdot \nabla {}_R\zeta_i .$$
From this and (\ref{phizeta}), for every $R>1$, we get  
\begin{eqnarray}\label{gggg}
\ \ \ \ \ \ \sum_{i=1}^{m}\int_{\mathbb R^n} \phi_i^2 {}_R\zeta_i^2
\mathcal A(\nabla u_i) \nabla  \sigma_i \cdot \nabla \sigma_i &\le& \epsilon \sum_{i=1}^{m} \int_{B_{2R}\setminus B_R} \phi_i^2 {}_R\zeta_i^2
\mathcal A(\nabla u_i) \nabla  \sigma_i \cdot \nabla \sigma_i 
\\&& \nonumber + \frac 1 \epsilon 
\sum_{i=1}^{m} \int_{B_{2R}\setminus B_R} \phi_i^2 
\mathcal A(\nabla u_i) \nabla {}_R\zeta_i \cdot \nabla {}_R\zeta_i.
 \end{eqnarray}
From this, (\ref{liouassum}) and the definition of the test function ${}_R\zeta_i$ we conclude that   the following integral is bounded,
\begin{equation}\label{bound1}
\sum_{i=1}^{m}\int_{\mathbb R^n} \phi_i^2 
\mathcal A(\nabla u_i) \nabla  \sigma_i \cdot \nabla \sigma_i<\infty.
\end{equation}
Now, sending $R\to \infty $ and $\epsilon \to \infty$ and applying (\ref{gggg}) and (\ref{liouassum}) show  that the integral in (\ref{bound1}) vanishes for every $i=1,\cdots,m$.  Finally, the fact that  $\mathcal A$ is a positive definite matrix implies  $|\nabla \sigma_i |\equiv 0$ for each $i=1,\cdots,m$. This completes the proof.  

\end{proof}

We are now ready to present the following De Giorgi type results in two dimensions. 

\begin{thm}\label{thm2} Suppose that $u=(u_i)_{i=1}^m$ is a classical bounded stable solution of symmetric system (\ref{main}) in two dimensions.   Assume also that $|\nabla u_i| \in L^\infty(\mathbb R^2) \cap W^{1,2}_{loc}(\mathbb R^2)$.   Then each $u_i$ is a one dimensional function, i.e. there exists $u_i^*:\mathbb R\to \mathbb R$ and $a\in \mathbb S^1$ such that $u_i(x)=u_i^*(a\cdot x)$.  In addition, the angle between $\nabla u_i$ and $\nabla u_j$ is $\arccos\left(  \frac{|\partial_i H_j|}{\partial_i H_j} \right)$. 
\end{thm}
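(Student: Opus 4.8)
The plan is to follow the classical De Giorgi strategy, now powered by the geometric Poincaré inequality of Theorem \ref{lempoin}. First I would exploit the hypothesis $n=2$ to choose the family of cutoffs ${}_R\zeta_i$ described above and test \eqref{poincare} with $\eta_i={}_R\zeta_i$. Since $|\nabla u_i|\in L^\infty(\mathbb R^2)$ and each $a_{i,j}$ grows at most like $\Phi'(|\nabla u_i|^2)+|\Phi''(|\nabla u_i|^2)||\nabla u_i|^2$, the right-hand side of \eqref{poincare} is bounded by $C\sum_i\int_{B_{2R}\setminus B_R}|\nabla{}_R\zeta_i|^2$, which in dimension two is $O(R^2\cdot R^{-2})=O(1)$. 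Hence the left-hand side of \eqref{poincare} is uniformly bounded in $R$; letting $R\to\infty$ and using that each summand on the left is nonnegative (the curvature term and the tangential-gradient term are manifestly $\ge 0$, and the first line is nonnegative by the Cauchy–Schwarz/AM–GM bound $\sqrt{\partial_jH_i\partial_iH_j}\,|\nabla u_i||\nabla u_j|\ge|\partial_jH_i\,\nabla u_i\cdot\nabla u_j|$ together with symmetry $\partial_jH_i=\partial_iH_j$), I conclude each term must vanish identically.

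From the vanishing of the term $\int\Phi'(|\nabla u_i|^2)|\nabla u_i|^2\mathcal K_i^2\,\eta_i^2$ and of $\int[2\Phi''|\nabla u_i|^2+\Phi']\,|\nabla_{T_i}|\nabla u_i||^2\eta_i^2$, using that $\Phi',\,\Phi'+2s\Phi''>0$ for $s>0$, I get that on $\{|\nabla u_i|\ne0\}$ the level sets of $u_i$ have zero principal curvatures and $|\nabla u_i|$ is constant along them. This is the standard rigidity input: it forces the connected components of $\{|\nabla u_i|\ne 0\}$ to be foliated by parallel hyperplanes with $|\nabla u_i|$ constant on each, so $u_i$ is locally one-dimensional; a connectedness/continuation argument (as in \cite{fsv}) upgrades this to: each $u_i$ is globally one-dimensional, $u_i(x)=u_i^*(a_i\cdot x)$ for some unit vector $a_i$. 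To see that all the $a_i$ coincide, I use the first line of \eqref{poincare}: its vanishing, being a sum of pointwise-nonnegative terms $\sqrt{\partial_jH_i\partial_iH_j}\,|\nabla u_i||\nabla u_j|\eta_i\eta_j-\partial_jH_i\,\nabla u_i\cdot\nabla u_j\,\eta_i^2$ symmetrized over $i,j$, forces equality in Cauchy–Schwarz wherever $\partial_jH_i\ne 0$, i.e. $\nabla u_i\parallel\nabla u_j$; combined with one-dimensionality this gives $a_i=\pm a_j$, and the sign is pinned by the sign of $\partial_jH_i$, yielding the stated angle $\arccos(|\partial_iH_j|/\partial_iH_j)\in\{0,\pi\}$.

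The main obstacle I anticipate is twofold. First, the passage from "locally one-dimensional on each component of $\{|\nabla u_i|\ne0\}$'' to "globally one-dimensional on $\mathbb R^2$'' must handle the set $\{|\nabla u_i|=0\}$ and the possibility of several components with different directions; this is where I would lean on the two-dimensional argument of Farina–Sciunzi–Valdinoci \cite{fsv}, checking that their hypotheses (here supplied by $|\nabla u_i|\in L^\infty\cap W^{1,2}_{loc}$ and boundedness of $u$) are met, and that the degenerate ellipticity of the operator $\div(\Phi'(|\nabla u_i|^2)\nabla\cdot)$ does not obstruct the conclusion. Second, one must be careful that the geometric identities \eqref{levelset1}–\eqref{levelset2} are being applied only where $|\nabla u_i|\ne0$ and that the test-function truncation in \eqref{poincare} is legitimate given only $W^{1,2}_{loc}$ regularity of $|\nabla u_i|$ — a routine approximation argument, but one that should be acknowledged. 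The alignment of directions and the identification of the angle, by contrast, I expect to be a short consequence once the rigidity from the Poincaré inequality is in hand.
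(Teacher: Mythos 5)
Your overall strategy (test the geometric Poincar\'e inequality (\ref{poincare}), use nonnegativity of the left-hand side, then read off rigidity of the level sets and parallelism of the gradients) is the same as the paper's, but there is a genuine gap at the crucial analytic step. With the standard cutoff $\eta_i={}_R\zeta_i$ (so $|\nabla\eta_i|\le C/R$ on $B_{2R}\setminus B_R$), the right-hand side of (\ref{poincare}) in dimension two is only $O(R^2\cdot R^{-2})=O(1)$, exactly as you compute. From a uniform bound you may conclude that the nonnegative left-hand side integrands have \emph{finite} integral over all of $\mathbb R^2$, but not that they vanish: ``bounded in $R$'' does not become ``zero'' in the limit, and there is no way to bootstrap finiteness into vanishing here, because the right-hand side integrand ($|\nabla u_i|^2\,\mathcal A(\nabla u_i)\nabla\eta_i\cdot\nabla\eta_i$) is not the same quantity as the left-hand side integrand, so the tail-smallness trick that works for the Caccioppoli-type Liouville theorem (Proposition \ref{liouville}) does not apply. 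This is precisely why the paper does not use the linear cutoff: it first proves the growth bound $\int_{B_R}\Phi'(|\nabla u_i|^2)|\nabla u_i|^2\le CR^2$ (your $L^\infty$ bound gives this too), upgrades it by Fubini to $\int_{B_R\setminus B_{\sqrt R}}\Phi'(|\nabla u_i|^2)|\nabla u_i|^2\,|x|^{-2}\le C\log R$ as in (\ref{logR}), and then tests (\ref{poincare}) with the \emph{logarithmic} cutoff which equals $1$ on $B_{\sqrt R}$, equals $(\log R-\log|x|)/\log R$ on $B_R\setminus B_{\sqrt R}$, and vanishes outside $B_R$. With that choice the right-hand side is bounded by $C/\log R\to 0$, and only then does nonnegativity force every term on the left to vanish identically.

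Once the right-hand side actually tends to zero, the remainder of your plan is consistent with the paper: vanishing of the curvature and tangential-gradient terms (using $\Phi'>0$ and $\Phi'+2s\Phi''>0$) gives one-dimensionality of each $u_i$, and vanishing of the first line forces equality $|\partial_jH_i|\,|\nabla u_i||\nabla u_j|=\partial_jH_i\,\nabla u_i\cdot\nabla u_j$, which yields the stated angle $\arccos\bigl(|\partial_jH_i|/\partial_jH_i\bigr)$. Your cautionary remarks about passing from local to global one-dimensionality across $\{|\nabla u_i|=0\}$ and about applying (\ref{levelset1})--(\ref{levelset2}) only where $|\nabla u_i|\neq0$ are reasonable (the paper is terse on this point and leans on the scalar literature), but the cutoff issue above is the step that must be repaired for the proof to go through.
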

\begin{proof} We apply the geometric Poincar\'{e} inequality given as Theorem \ref{lempoin} to provide a proof.   Ideas and method applied in this proof are strongly motivated by the ones given for the case of the scalar  equation by Berestycki, Caffarelli and Nirenberg in \cite{bcn}, Ghoussoub and Gui in \cite{gg1}, Farina,  Sciunzi and  Valdinoci in \cite{fsv} and references therein. In addition, for the case of system of equations we refer interested readers to \cite{fg,fs} by Ghoussoub,  Sire and  the author.      
Note that from boundedness of $|\nabla u_i|$ and  $\Phi'\in C(\mathbb R^+)$ in two dimensions we have 
\begin{equation}\label{mm}
\int_{B_R} \Phi'(|\nabla u_i|^2) |\nabla u_i|^2  \le   C R^2,
 \end{equation}
for any $R>1$. This can be also proved by multiplying (\ref{main}) by ${}_R \zeta_i^2 u_i$ and integrating by parts.    Straightforward calculations show that for each $i$ we have  
\begin{eqnarray*}
&&\int_{B_{R}\setminus B_{\sqrt R}}  \frac{1}{ | x|^2} \Phi'(|\nabla u_i|^2) |\nabla u_i|^2 d  x
\\&=& 2\int_{B_{R}\setminus B_{\sqrt R}}  \int_{| x|}^{R} \tau^{-3} \Phi'(|\nabla u_i|^2) |\nabla u_i|^2 d\tau d  x + \frac{1}{R^2}  \int_{B_{R}\setminus B_{\sqrt R}} \Phi'(|\nabla u_i|^2) |\nabla u_i|^2 
\\&\le & 2 \int_{\sqrt R}^{R}  \tau^{-3}  \int_{B_{\tau}} \Phi'(|\nabla u_i|^2) |\nabla u_i|^2  d  x d\tau + \frac{1}{ R^2}  \int_{B_{R}} \Phi'(|\nabla u_i|^2) |\nabla u_i|^2,
\end{eqnarray*}
where we have used the Fubini's theorem.  From this and (\ref{mm}) we get 
\begin{equation}\label{logR}
\int_{B_{R}\setminus B_{\sqrt R}}  \frac{\Phi'(|\nabla u_i|^2) |\nabla u_i|^2}{| x|^2} 
\le C \log R. 
\end{equation}
Now for each $i$ set $\eta_i$ to be the following standard  test function 
$$\eta_i (x):=\left\{
                      \begin{array}{ll}
                        \frac{1}{2}, & \hbox{if $|x|\le\sqrt{R}$,} \\
                      \frac{ \log R-\log |x|}{\log R}, & \hbox{if $\sqrt{R}< |x|< R$,} \\
                       0, & \hbox{if $|x|\ge R$.}
                                                                       \end{array}
                    \right.$$
Note that $ \nabla \eta_i(x)=-\frac{x}{|x|^2\log R}$ on $\sqrt{R}< |x|< R$. Therefore, the right-hand side of the inequality given in  theorem \ref{lempoin} is of the form 
\begin{eqnarray*}
&&\sum_{i=1}^{m}  \int_{\mathbb R^{n}}  2 |\nabla u_i|^2 \Phi''(|\nabla u_i|^2) |\nabla u_i\cdot \nabla\eta_i|^2 +\Phi'(|\nabla u_i|^2) |\nabla u_i|^2 |\nabla \eta_i|^2
\\&=&  \sum_{i=1}^{m}  \int_{B_{R}\setminus B_{\sqrt R}}  |\nabla u_i|^2 \mathcal A(\nabla u_i) \nabla \eta_i\cdot \nabla \eta_i
\\&\le& \frac{C}{\log^2 R } \sum_{i=1}^{m} \int_{B_{R}\setminus B_{\sqrt R}} \frac{1}{|x^4|} |\nabla u_i|^2 \mathcal A(\nabla u_i)  x\cdot x
\\&\le&  \frac{C}{\log^2 R }\sum_{i=1}^{m} \int_{B_{R}\setminus B_{\sqrt R}}  \frac{\Phi'(|\nabla u_i|^2) |\nabla u_i|^2}{| x|^2} 
\\&\le& \frac{C}{\log R}, 
\end{eqnarray*}
where we have used  (\ref{logR}) to conclude the last inequality. From this and the geometric inequality given by (\ref{poincare}) and the fact that the system is symmetric we get 
 \begin{eqnarray}\label{}
&&
\sum_{i\neq j} \int_{B_{\sqrt R}}   \left( |\partial_{j} H_i( u)|   |\nabla  u_i|  |\nabla  u_j|  - \partial_{j}H_i(u)  \nabla  u_i \cdot   \nabla  u_j  \right) 
\\&& \nonumber+
\sum_{i=1}^m    \int_{    \{|\nabla u_i|\neq 0 \} \cap B_{\sqrt R} }  \Phi'(|\nabla u_i|^2)   |\nabla u_i|^2 \mathcal{K}_i^2 
\\&& \nonumber +\sum_{i=1}^m    \int_{    \{|\nabla u_i|\neq 0 \} \cap B_{\sqrt R} } \left[  2 \Phi''(|\nabla u_i|^2) |\nabla u_i^2|  +\Phi'(|\nabla u_i|^2)  \right] | \nabla_{T_i} |\nabla  u_i| |^2   
\\&\le&\nonumber \frac{C}{\log R}. 
  \end{eqnarray} 
Now sending $R\to\infty$ and the fact that all of the terms in the left-hand side are nonnegative imply that each $u_i$ is one dimensional function and  $|\partial_{j} H_i( u)|   |\nabla  u_i|  |\nabla  u_j|  = \partial_{j}H_i(u)  \nabla  u_i \cdot   \nabla  u_j$.  The latter implies that the angle between $\nabla u_i$ and $\nabla u_j$ is precisely $\arccos \left(\frac{|\partial_{j} H_i( u)| }{\partial_{j}H_i(u)}\right)$ when $i\neq j$. This completes the proof.  
\end{proof}

Note that in the statement of Theorem \ref{thm2}, $\Phi$ does not have to satisfy conditions (A) or (B). However in the next theorem that is in regards to three dimensions one of conditions (A) or (B) is needed.   For the case of semilinear systems in two dimensions we refer interested readers to \cite{abg} for the  construction of two dimensional solutions  in the absence of stability and $H$-monotonicity,

\begin{thm}\label{thm3} Suppose that $u=(u_i)_{i=1}^m$ is a classical bounded $H$-monotone solution of symmetric system (\ref{main}) in three dimensions. Let $\Phi$ satisfy one of conditions (A) or (B).   Assume also that $|\nabla u_i| \in L^\infty(\mathbb R^3) \cap W^{1,2}_{loc}(\mathbb R^3)$.   Then each $u_i$ is a one dimensional function.  In addition, the angle between $\nabla u_i$ and $\nabla u_j$ is $\arccos\left(  \frac{|\partial_i H_j|}{\partial_i H_j} \right)$.  
\end{thm}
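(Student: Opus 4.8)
The plan is to reduce the three-dimensional case to the linear Liouville theorem of Proposition~\ref{liouville}, following the classical Berestycki--Caffarelli--Nirenberg scheme as adapted to quasilinear systems. First I would recall that every $H$-monotone solution is stable (differentiating (\ref{main}) with respect to $x_n$), so $\phi_i:=\partial_{x_n}u_i$ is an admissible sign-definite weight, and $\phi_i^2>0$ by the strict monotonicity assumption. For a fixed direction $\eta=(\eta',0)\in\mathbb R^2\times\{0\}$ set $\psi_i:=\nabla u_i\cdot\eta$ and $\sigma_i:=\psi_i/\phi_i$; by Lemma~\ref{lmlinear} the vector $\sigma=(\sigma_i)_{i=1}^m$ solves (\ref{insigma1}), which is exactly of the form (\ref{insigma}) with $h_{ij}(x)=\partial_jH_i(u)\phi_i\phi_j$ (nonnegative once we fix the orientation via $H$-monotonicity, and symmetric in $i,j$ precisely because the system is symmetric and $h_{ij}=\partial_jH_i\,\phi_i\phi_j=\partial_iH_j\,\phi_j\phi_i$), and $f=\mathrm{id}$, which is odd and nonnegative on $\mathbb R^+$. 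The regularity hypotheses $|\nabla u_i|\in L^\infty\cap W^{1,2}_{loc}$ together with $\phi_i^2>0$ give $\sigma_i\in H^1_{loc}(\mathbb R^3)$ and local boundedness of $|\nabla u_i|$ and $\phi_i$, so all structural assumptions of Proposition~\ref{liouville} hold except the growth bound (\ref{liouassum}).

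The heart of the matter is therefore verifying
\[
\sum_{i=1}^{m}\int_{B_{2R}\setminus B_R}\phi_i^2\sigma_i^2\,\mathcal A(\nabla u_i)\,{}_R\Gamma_i\cdot{}_R\Gamma_i\le CR^2.
\]
Since $|{}_R\Gamma_i|\le C$ and, by (\ref{Aetazeta}), $\mathcal A(\nabla u_i){}_R\Gamma_i\cdot{}_R\Gamma_i\le C\big(\Phi'(|\nabla u_i|^2)+|\Phi''(|\nabla u_i|^2)|\,|\nabla u_i|^2\big)$, which is bounded on $\mathbb R^3$ because $|\nabla u_i|$ is bounded and $\Phi\in C^2(\mathbb R^+)$, the integrand is controlled by $C\phi_i^2\sigma_i^2=C\psi_i^2=C|\nabla u_i\cdot\eta|^2\le C|\nabla u_i|^2$. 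Thus it suffices to show $\int_{B_{2R}}|\nabla u_i|^2\le CR^3$ for all $R>1$. This is where conditions (A) or (B) enter: under either condition one has the ellipticity lower bound for the operator, and multiplying the $i$-th equation of (\ref{main}) by ${}_R\zeta_i^2(u_i-a_i)$ (using the limit $a_i$ of $u_i$ in $x_n$, or simply boundedness of $u_i$) and integrating by parts yields, after a Cauchy--Schwarz/Young absorption, $\int_{B_R}\Phi'(|\nabla u_i|^2)|\nabla u_i|^2\le CR^n=CR^3$; combined with the lower bound on $\Phi'(|\nabla u_i|^2)$ coming from (A1) or (B1) (which, since $|\nabla u_i|$ is bounded, is bounded below by a positive constant), this gives $\int_{B_R}|\nabla u_i|^2\le CR^3$. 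Hence (\ref{liouassum}) holds with exponent $2<3$, well within the admissible range, and Proposition~\ref{liouville} yields $\nabla\sigma_i\equiv 0$, i.e. $\sigma_i$ is constant for every direction $\eta$ and every $i$.

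Once $\sigma_i$ is constant in every horizontal direction, $\nabla u_i$ is a fixed scalar multiple of $\phi_i\,(e_\eta)$-combinations, hence $\nabla u_i(x)$ is everywhere parallel to a single fixed vector $a_i\in\mathbb S^2$; the standard argument then shows $a_i$ is independent of $i$ up to sign and $u_i(x)=u_i^*(a\cdot x)$ for a common $a\in\mathbb S^2$, so each $u_i$ is one-dimensional. For the statement about the angle between $\nabla u_i$ and $\nabla u_j$, I would invoke the geometric Poincar\'e inequality, Theorem~\ref{lempoin}: exactly as in the proof of Theorem~\ref{thm2}, choosing the logarithmic cutoff $\eta_i$ and using the bound $\int_{B_R\setminus B_{\sqrt R}}|\nabla u_i|^2/|x|^2\le C\log R$ (valid in dimension $3$ by the same Fubini computation once $\int_{B_R}|\nabla u_i|^2\le CR^3$, noting the denominator $|x|^2$ together with the surface measure makes the integral logarithmic in the relevant range — here one uses that the solution being one-dimensional forces $|\nabla u_i|$ to be essentially supported on a slab, so the $|x|^{-2}$ weight is genuinely integrable against $R^{3}$-growth up to a log), the right-hand side of (\ref{poincare}) tends to $0$; since the system is symmetric the cross terms become $|\partial_jH_i(u)|\,|\nabla u_i||\nabla u_j|-\partial_jH_i(u)\nabla u_i\cdot\nabla u_j\ge 0$, and letting $R\to\infty$ forces equality, whence $\cos(\angle(\nabla u_i,\nabla u_j))=|\partial_jH_i(u)|/\partial_jH_i(u)$ wherever $\partial_jH_i\neq 0$.

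I expect the main obstacle to be the derivation of the energy bound $\int_{B_R}|\nabla u_i|^2\le CR^3$ and, relatedly, of the logarithmic weighted estimate in dimension three: in two dimensions boundedness of $|\nabla u_i|$ alone gives $\int_{B_R}|\nabla u_i|^2\le CR^2$ for free, but in three dimensions $CR^3$ is not automatically enough to make the $|x|^{-2}$-weighted integral logarithmic over the full annulus $B_R\setminus B_{\sqrt R}$, so one genuinely needs to exploit that the solution is already known to be one-dimensional (hence $|\nabla u_i|$ decays away from a hyperplane, or at least that the relevant energy over $B_R$ is $O(R^2)$ rather than $O(R^3)$, via Theorem~\ref{enopthm}) before running the Poincar\'e argument for the angle. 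Managing this bootstrap — first the Liouville step to get one-dimensionality with the cheap $R^3$ bound, then upgrading to the $R^2$-type energy bound from Theorem~\ref{enopthm} to close the angle computation — is the delicate point, and conditions (A) or (B) are used precisely to make the ellipticity estimates and the energy bound rigorous.
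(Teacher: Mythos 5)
Your overall scheme (reduce to Proposition \ref{liouville} via Lemma \ref{lmlinear} with $h_{ij}=\partial_j H_i(u)\phi_i\phi_j$, then get the angle from the Poincar\'e inequality) is the right skeleton, but there is a genuine gap at the decisive step: the verification of (\ref{liouassum}). That hypothesis demands the bound $C R^{2}$ on the annulus integral, and this exponent is not negotiable — in the proof of Proposition \ref{liouville} it is exactly what makes the right-hand side of (\ref{gggg}) stay bounded after multiplying by $R^{-2}$ from $\nabla\, {}_R\zeta_i=R^{-1}{}_R\Gamma_i$, so that one can first conclude finiteness of $\sum_i\int\phi_i^2\,\mathcal A(\nabla u_i)\nabla\sigma_i\cdot\nabla\sigma_i$ and then send $R\to\infty$. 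What you establish is only $\int_{B_{2R}}|\nabla u_i|^2\le CR^{3}$, which in dimension three is trivial from $|\nabla u_i|\in L^\infty$ and does not use the equation at all; your sentence ``hence (\ref{liouassum}) holds with exponent $2<3$'' conflates the exponent you need with the exponent you have. Moreover, your Caccioppoli computation cannot be improved for a general nonlinearity: without a sign condition on $H_i$ (as in Theorem \ref{thmLio}) the term $\int H_i(u)(u_i-a_i)\,{}_R\zeta_i^2$ is only $O(R^{3})$, so the equation gives nothing better than the trivial bound. Your closing ``bootstrap'' (Liouville first with the cheap $R^{3}$ bound, then upgrade the energy for the angle) therefore has the steps in the wrong order: the $R^{2}$ bound is needed \emph{before} the Liouville step, not after, and Theorem \ref{enopthm} cannot be invoked as stated since it assumes the limits $\lim_{x_n\to\infty}u_i(x',x_n)$ are constants, which is not available here.

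The paper closes exactly this gap using $H$-monotonicity in the Ambrosio--Cabr\'e fashion: the profile $v_i(x_1,x_2):=\lim_{x_3\to\infty}u_i(x_1,x_2,x_3)$ is a bounded stable solution of (\ref{main}) in $\mathbb R^{2}$, hence one-dimensional by Theorem \ref{thm2}, so its energy on two-dimensional balls grows like $R$; then the energy comparison along the shifted solutions $u^t$ (the same computation as in Theorem \ref{enopthm}, but with the non-constant limit $v$, using $\lim_{t\to\infty}E_R(u^t)=E_R(v)\le CR^{2}$ and the monotonicity of $u^t_i$ in $t$ to control $\int_0^t\partial_sE_R(u^s)\,ds$ by boundary terms) yields $\sum_i\int_{B_R}\Phi(|\nabla u_i|^2)\le CR^{2}$. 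Conditions (A) or (B) then give $\mathcal A(\nabla u_i){}_R\Gamma_i\cdot{}_R\Gamma_i\le M\,\Phi(|\nabla u_i|^2)/|\nabla u_i|^2$ type control, so $\phi_i^2\sigma_i^2\,\mathcal A(\nabla u_i){}_R\Gamma_i\cdot{}_R\Gamma_i\le M\,\Phi(|\nabla u_i|^2)$ and (\ref{liouassum}) follows with the correct exponent. Finally, for the angle you do not need to rerun the logarithmic-cutoff argument in dimension three (your justification of the $\log R$ weighted estimate there, resting on the solution being ``essentially supported on a slab,'' is not rigorous): once each $u_i$ is one-dimensional and stable, the solution can be viewed as a stable solution in two variables and Theorem \ref{thm2} already gives that the angle between $\nabla u_i$ and $\nabla u_j$ equals $\arccos\left(\frac{|\partial_j H_i|}{\partial_j H_i}\right)$.
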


\begin{proof} Methods and ideas applied here are strongly motived by the ones given by Ambrosio and Cabr\'{e} in  \cite{ac} and  Alberti, Ambrosio and Cabr\'{e} in \cite{aac} in the case of a single equation and by Ghoussoub and the author in \cite{fg} for the case of systems.  We first note that $u$ being $H$-monotone means that  $u$ is a stable solution of (\ref{main}). Moreover, the function $v_i(x_1, x_2):=\lim_{x_3\to \infty}u_i(x_1, x_2, x_3)$ is also a bounded stable solution for (\ref{main}) in $\mathbb {R}^2$. Note also that since $u$ is an $H$-monotone solution, the system (\ref{main}) is then orientable. It follows from Theorem \ref{thm2} that each $v_i$ is one dimensional and consequently the energy of $v=(v_i)_{i=1}^m$ in a two-dimensional ball of radius $R$ is bounded by a multiple of $R$ which implies  
 \begin{equation}
\label{boundEt}
 \limsup_{t\to\infty} E(u^t) \le C R^{2},
 \end{equation}
 where $u^t(x'):=u(x',x_n+t)$ for $t\in\mathbb{R}$ and 
 $$E_R(u)=\int_{B_{R}} \sum_{i=1}^m \Phi(|\nabla u_i|^2)  - \tilde H(u)+ c_u ,
 $$ for $c_u:=\max \tilde H(u)$.  Applying similar arguments as in the proof of Theorem \ref{enopthm},  we shall show that  
\begin{equation}
\label{boundEphi}
 \sum_{i=1}^m \int_{B_{R}} \Phi(|\nabla u_i|^2) \le C R^{2},
 \end{equation}
 where the constant  $C$ is independent from $R$.  Note that shift function $u^t=(u_i^t)_{i=1}^m$ is also a bounded solution of (\ref{main}) with $|\nabla u_i^t| \in L^{\infty}(\mathbb{R}^n)$, i.e.,
\begin{eqnarray}
\label{maint}
-\div\left(\Phi'(|\nabla u^t_i|^2) \nabla u^t_i\right)=  H_i(u^t)   \ \ \text{in}\ \ \mathbb{R}^n,
  \end{eqnarray}
and also 
  \begin{eqnarray}
\label{monot}
\partial_t u^t_i>0 > \partial_t u^t_j \ \
\ \ \text{for all $i\in\bar I$ and $j\in \bar J$ and in} \  \mathbb{R}^n.
  \end{eqnarray}
Since  $u_i^t$ converges to $v_i$ in $C^1_{loc}(\mathbb {R}^n)$ for all $i=1,\cdots,m$, we have $$ \lim_{t\to\infty} E(u^t)= E(v).
$$
Now, we claim that the  following upper bound for the energy holds, for all  $ t \in\mathbb{R^+}$ 
\begin{equation}\label{energyt}
E_R(u) \le  E_R(u^t) +M \int_{\partial B_{R}} \left(  \sum_{i\in\bar I} (u_i^t-u_i)+ \sum_{j\in \bar J}(u_j-u_j^t)  \right)  dS ,
\end{equation}  
where $M:=\max_{i=1}^m \left\{|| \Phi'(|\nabla u_i|^2) |\nabla u_i|  ||_{L^\infty(\mathbb{R}^n)}\right\}$. Indeed, by differentiating the energy functional along the path $u^t$, one gets 
\begin{eqnarray}\label{der-energy}
\partial_t E_R(u^t)= \sum_{i=1}^m \int_{B_{R}} 2  \Phi'(|\nabla u^t_i|^2)   \nabla u_i^t\cdot \nabla (\partial _t u_i^t)- 2  H_i(u^t) \partial_t u_i^t,
    \end{eqnarray}
Now, multiply (\ref{maint}) with $\partial_t u^t$ and integrate by parts to get
  \begin{eqnarray}
\label{partialt}
&& \int_{B_{R}} \Phi'(|\nabla u^t_i|^2)  \nabla u_i^t\cdot \nabla (\partial _t u_i^t) 
\\&&\nonumber + \int_{\partial B_{R}}  \Phi'(|\nabla u^t_i|^2)  \partial_\nu u_i^t \partial_t u_i^t =\int_{B_{R}}   H_i(u^t) \partial_t u_i^t.
  \end{eqnarray}
for each $i=1,\cdots,m$.  From (\ref{partialt}) and  (\ref{der-energy}) we obtain 
  \begin{eqnarray}\label{der-energy-bd}
\partial_t E_R(u^t)= 2 \sum_{i}  \int_{\partial B_{R}}  \Phi'(|\nabla u^t_i|^2) \partial_\nu u_i^t \partial_t u_i^t.
   \end{eqnarray}
 Note that $-M\le  \Phi'(|\nabla u^t_i|^2) \partial_\nu u_i^t\le M$ for each $i$ and $\partial_t u_i^t>0>\partial_t u_j^t$ for $i\in \bar I$ and $j\in \bar J$ . Therefore, 
  \begin{eqnarray}\label{der-energy-bdM}
\partial_t E_R(u^t) \ge M \int_{\partial B_{R}}\left( \sum_{j\in \bar J} \partial_t u_j^t - \sum_{i\in \bar I}\partial_t u_i^t \right) d S.
   \end{eqnarray}
   On the other hand,
 \begin{eqnarray}\label{der-energy-last}
 E_R(u)&=& E_R(u^t)- \int_{0}^{t} \partial_t E_R(u^s) ds,\nonumber\\
 &\le&E_R(u^t) +M \int_{0}^{t}  \int_{\partial B_{R}} \left(  \sum_{i\in \bar I} \partial_s u_i^s-  \sum_{j\in \bar  J} \partial_s u_j^s \right) dS ds\nonumber\\
 &=&E_R(u^t) +M \int_{\partial B_{R}}  \left(  \sum_{i\in \bar I} (u_i^t-u_i)+ \sum_{j\in \bar J}(u_j-u_j^t)  \right) dS.
 \end{eqnarray}
To finish the proof of the theorem just note that $u_i<u_i^t$ and $u_j^t<u_j$ for all $i\in \bar I$, $j\in \bar J$ and $t\in \mathbb{R^+}$. Moreover, from (\ref{boundEt}) we have $\lim_{t\to\infty} E_R(u^t)\le CR^2$. Therefore, (\ref{der-energy-last})  yields  $ E_R(u) \le C |\partial B_R|\le C R^{2}$.  This proves  (\ref{boundEphi}).   

Now, set $\phi_i:=\partial_{x_n} u_i$ and $\psi_i:= \nabla u_i\cdot \eta$ where $\eta=(\eta',0)\in\mathbb R^{n-1}\times \{0\}$ and define $\sigma_i:=\frac{\psi_i}{\phi_i}$.  Lemma \ref{lmlinear} implies that $\sigma$ satisfies (\ref{insigma1}). Note that  $\phi_i^2\sigma^2_i=\psi_i^2 \le  |\nabla u_i|^2$.    From this and the fact that one of conditions (A) or (B) holds there exists a constant $M$ that is independent from $R$ such that
 \begin{eqnarray}\label{ddd}
\ \ \sum_{i=1}^{m}\int_{B_{2R}\setminus B_R} \phi_i^2\sigma_i^2
\mathcal A(\nabla u_i) {}_R\Gamma_i \cdot {}_R\Gamma_i &\le&  \sum_{i=1}^{m}\int_{B_{2R}} |\nabla u_i|^2 \mathcal A(\nabla u_i) {}_R\Gamma_i \cdot {}_R\Gamma_i 
\\&\le& M \sum_{i=1}^{m} \int_{B_{2R}} \Phi(|\nabla u_i|^2), 
 \end{eqnarray}
here we have used the fact that $||{}_R \Gamma_i(x)||_{L^\infty(\mathbb R^n)}, ||\nabla u_i||_{L^\infty(\mathbb R^n)}\le C$ for some  $C$ that is independent from $R$.  We now apply Proposition \ref{liouville} for $h_{ij}=\partial_j H_{i} (u) \phi_i \phi_j$ and the identity function $f$ to conclude that each $\sigma_i$ is constant. This implies that each $u_i$ is a one-dimensional function.  Since $u=(u_i)_{i=1}^m$ is a one-dimensional stable solution of (\ref{main}), Theorem   \ref{thm2} implies that the angle between $\nabla u_i$ and $\nabla u_j$ is $\arccos\left(  \frac{|\partial_i H_j|}{\partial_i H_j} \right)$. 

\end{proof}

\section{Liouville theorems  for symmetric systems}\label{secop}
  For bounded stable solutions of (\ref{main}) up to four dimensions we have the following Liouville theorem as long as each $H_i(u)$ is nonnegative.  

\begin{thm}\label{thmLio} Suppose that $u=(u_i)_{i=1}^m$ is a classical bounded stable solution of symmetric system (\ref{main}) where $H_i \ge 0$ for each $i$.   Let $\Phi$ satisfy one of conditions (A) or (B).   Assume also that $|\nabla u_i| \in L^\infty(\mathbb R^n) \cap W^{1,2}_{loc}(\mathbb R^n)$.  Then each $u_i$ must be constant provided  $n \le 4$. 
\end{thm}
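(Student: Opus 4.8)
The plan is to adapt the standard Farina-type argument for Liouville theorems for stable solutions, using the geometric Poincar\'e inequality of Theorem~\ref{lempoin} together with the sign condition $H_i\ge 0$ and a judicious choice of test functions whose support grows like a power of $R$ and whose energy estimate closes precisely in dimensions $n\le 4$. First I would record the key structural consequence of Theorem~\ref{lempoin}: since the system is symmetric, $\partial_j H_i=\partial_i H_j$, so the first sum in (\ref{poincare}) becomes $\sum_{i\neq j}\int \bigl(|\partial_j H_i|\,|\nabla u_i||\nabla u_j|\eta_i\eta_j-\partial_j H_i\,\nabla u_i\cdot\nabla u_j\,\eta_i^2\bigr)$, which upon choosing a common $\eta_i=\eta$ and symmetrizing in $i,j$ is nonnegative (it is $\sum_{i<j}\int|\partial_j H_i|\,(|\nabla u_i||\nabla u_j|-\mathrm{sgn}(\partial_j H_i)\nabla u_i\cdot\nabla u_j)\eta^2\ge0$ by Cauchy--Schwarz). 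Hence \emph{every} term on the left of (\ref{poincare}) is nonnegative, and the inequality reduces to
\begin{equation*}
\sum_{i=1}^m\int_{\{|\nabla u_i|\neq0\}}\bigl[2\Phi''(|\nabla u_i|^2)|\nabla u_i|^2+\Phi'(|\nabla u_i|^2)\bigr]|\nabla_{T_i}|\nabla u_i||^2\eta^2+\cdots\le \sum_{i=1}^m\int |\nabla u_i|^2\,\mathcal A(\nabla u_i)\nabla\eta\cdot\nabla\eta.
\end{equation*}

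Second, I would control the right-hand side. Using condition (A) or (B), $\mathcal A(\nabla u_i)\nabla\eta\cdot\nabla\eta\le C(\epsilon+|\nabla u_i|)^{p-2}|\nabla\eta|^2$ (resp.\ the (A) analogue), and since $|\nabla u_i|\in L^\infty$ this is bounded by $C|\nabla\eta|^2$; so the right side is at most $C\sum_i\int|\nabla u_i|^2|\nabla\eta|^2$. The natural energy bound $\int_{B_R}\sum_i\Phi(|\nabla u_i|^2)\le CR^{n-1}$ is not available here without $H$-monotonicity, but one gets the cruder bound $\int_{B_{2R}}\sum_i|\nabla u_i|^2\le CR^n$ directly by testing (\ref{main}) with ${}_R\zeta_i^2 u_i$ (here boundedness of $u_i$ and $|\nabla u_i|$ and positivity of $H_i$ are used: the term $\int H_i(u){}_R\zeta_i^2 u_i$ has a sign and can be absorbed, or is bounded since $u$ is bounded). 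Then the classical Farina test function $\eta$ equal to $1$ on $B_{\sqrt R}$, logarithmically cut off on $B_{\sqrt R}\setminus B_R$... wait — that only gives $\log R$ in dimension $2$. For general $n\le4$ I would instead use $\eta_R(x)=1$ on $B_R$, $\eta_R=2-|x|/R$ on $B_{2R}\setminus B_R$ (so $|\nabla\eta_R|\le C/R$), giving right-hand side $\le C R^{-2}\cdot R^n = CR^{n-2}$, which does \emph{not} tend to zero for $n>2$. The correct device, exactly as in Farina--Sciunzi--Valdinoci and in \cite{fg}, is the two-parameter logarithmic cutoff: $\eta_R\equiv1$ on $B_{\sqrt R}$, $\eta_R(x)=\frac{\log R-\log|x|}{\tfrac12\log R}$ on $B_{\sqrt R}\setminus B_R$, combined with the Fubini/Hardy-type trick (as in the proof of Theorem~\ref{thm2}, estimate (\ref{logR})) to convert $\int_{B_R}|\nabla u_i|^2\le CR^{n-1}$-type growth into $\int_{B_R\setminus B_{\sqrt R}}|x|^{-2}|\nabla u_i|^2\le C\log R$, whence the right-hand side is $O(1/\log R)$. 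This is where the dimensional restriction $n\le4$ enters: the Hardy trick turns a bound $\int_{B_\tau}|\nabla u_i|^2\le C\tau^{n-1}$ into the needed $\log$-growth only when $n-1\le 3$, i.e. $n\le 4$, because one integrates $\tau^{n-1}\cdot\tau^{-3-?}$... more precisely with the weight $|x|^{-(n-1)}$ one needs $\int_{\sqrt R}^R \tau^{-3}\tau^{n-1}d\tau$ to be $O(\log R)$, forcing $n=4$, and $n<4$ is then easier.

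Third — and this is the main obstacle — I must produce the ``$\int_{B_R}\sum_i|\nabla u_i|^2\lesssim R^{n-1}$'' (or the appropriate $\Phi$-energy version $\int_{B_R}\sum_i\Phi(|\nabla u_i|^2)\lesssim R^{n-1}$) that feeds the Hardy trick, since a general bounded stable solution need not be $H$-monotone and so Theorem~\ref{enopthm} does not apply. The resolution, following Dupaigne--Farina \cite{df} and \cite{fg}, is that for stable solutions with $H_i\ge0$ the stability inequality (\ref{stability}) \emph{itself} yields the energy bound: test (\ref{stability}) with $\zeta_i={}_R\zeta_i$, use $\partial_i H_i\le$ (control via $H_i\ge0$ and a differential inequality, or rather use that $\int \mathcal A(\nabla u_i)\nabla{}_R\zeta_i\cdot\nabla{}_R\zeta_i\le CR^{n-2}$ and the left side controls $\int_{B_R}\sqrt{\partial_iH_i\partial_jH_j}$ terms), \emph{together with} the equation tested against ${}_R\zeta_i^2 u_i$ which gives $\int_{B_R}\Phi'(|\nabla u_i|^2)|\nabla u_i|^2\le CR^{n-1}$ after using $H_i\ge0$ to drop (or sign-control) the $\int H_i u_i$ term and the $L^\infty$ bounds; the inequality $\Phi(s)\le 2s\Phi'(s)$ (noted before (\ref{phit})) then upgrades this to $\int_{B_R}\Phi(|\nabla u_i|^2)\le CR^{n-1}$, and conditions (A)/(B) plus $|\nabla u_i|\in L^\infty$ make $\Phi(|\nabla u_i|^2)$ comparable to $|\nabla u_i|^2$. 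I would carry these steps out in order: (i) symmetry reduction of (\ref{poincare}) and nonnegativity of all LHS terms; (ii) the $CR^{n-1}$ energy bound via equation- and stability-testing using $H_i\ge0$ and (A)/(B); (iii) the Fubini/Hardy conversion to a $\log R$ weighted bound, valid for $n\le4$; (iv) the two-parameter logarithmic test function in (\ref{poincare}), yielding RHS $=O(1/\log R)$; (v) let $R\to\infty$ to force $\nabla_{T_i}|\nabla u_i|\equiv0$ and the Cauchy--Schwarz equality, then (as in Theorem~\ref{thm2}) conclude each $u_i$ is one-dimensional, and finally invoke the one-dimensional structure plus $H_i\ge0$ and boundedness to conclude $u_i$ is constant (a one-dimensional bounded solution $-( \Phi'(u_i'^2)u_i')'=H_i(u)\ge0$ with a Hamiltonian/monotonicity argument, or directly: the already-established energy bound $\int_{B_R}\Phi(|\nabla u_i|^2)\le CR^{n-1}$ for a one-dimensional function forces $\Phi(|\nabla u_i|^2)\in L^1(\mathbb R)$ per unit cross-section, hence $\nabla u_i\to0$, and then the Liouville theorem for solutions of finite energy, Theorem~\ref{LioufiniteE}, or a direct ODE argument, gives $u_i\equiv\mathrm{const}$).
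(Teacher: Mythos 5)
Your route (geometric Poincar\'e inequality of Theorem \ref{lempoin} plus a logarithmic cutoff and a Fubini/Hardy weight argument, as in Theorem \ref{thm2}) is genuinely different from the paper's proof, which never invokes Theorem \ref{lempoin} here: the paper tests (\ref{main}) against ${}_R\zeta_i^2\,(u_i-\|u_i\|_{L^\infty(\mathbb R^n)})$, so that $H_i\ge 0$ makes the nonlinear term harmless, and after a Cauchy--Schwarz absorption obtains the Caccioppoli bound $\sum_i\int_{B_R}\Phi'(|\nabla u_i|^2)|\nabla u_i|^2\le CR^{n-2}$; it then sets $\sigma_i=(\nabla u_i\cdot\eta)/\phi_i$ with $\phi_i$ the stability functions, notes $\phi_i^2\sigma_i^2\le|\nabla u_i|^2$ and $\Phi(s)\le 2s\Phi'(s)$, and applies the linear Liouville theorem, Proposition \ref{liouville}, whose hypothesis $\le CR^2$ is exactly $n-2\le 2$, i.e.\ $n\le 4$; one-dimensionality and then constancy follow, with no curvature/level-set argument.

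The genuine gap in your proposal is the dimensional bookkeeping at the decisive step. The energy bound you actually claim to produce is $\int_{B_R}\Phi'(|\nabla u_i|^2)|\nabla u_i|^2\le CR^{n-1}$, and with $\int_{B_\tau}(\cdot)\le C\tau^{n-1}$ the Fubini/Hardy computation gives $\int_{B_R\setminus B_{\sqrt R}}|x|^{-2}\Phi'(|\nabla u_i|^2)|\nabla u_i|^2\le C\int_{\sqrt R}^{R}\tau^{n-4}\,d\tau+CR^{n-3}$, which is $O(\log R)$ only for $n\le 3$; for $n=4$ it is of order $R$, so the right-hand side of (\ref{poincare}) with your logarithmic cutoff is $O(R/\log^2R)$ and does not tend to zero. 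Your explicit assertion that ``$\int_{\sqrt R}^{R}\tau^{-3}\tau^{n-1}d\tau=O(\log R)$ forces $n=4$'' is a miscomputation: that integral is $O(\log R)$ precisely when $n=3$ and grows like $R$ when $n=4$. As written, your scheme therefore only reaches $n\le 3$. The repair is exactly the sharper bound the paper proves: test with ${}_R\zeta_i^2\,(u_i-\|u_i\|_{L^\infty})$ (testing with ${}_R\zeta_i^2u_i$, as you suggest, does not give the $\int H_iu_i$ term a sign since $u_i$ may change sign) and absorb the cross term to get $CR^{n-2}$; with $\int_{B_\tau}(\cdot)\le C\tau^{n-2}$ the weighted integral is $O(\log R)$ also at $n=4$ and your Poincar\'e route closes, or, once you have $CR^{n-2}$, you can bypass Theorem \ref{lempoin} altogether and conclude via Proposition \ref{liouville} as the paper does (this also avoids having to argue from vanishing level-set curvature to one-dimensionality in dimensions $3$ and $4$, and constancy then follows because a nonconstant one-dimensional profile would force $\int_{B_R}\Phi'(|\nabla u_i|^2)|\nabla u_i|^2$ to grow like $R^{n-1}$, contradicting $CR^{n-2}$).
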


\begin{proof} Multiply both sides of (\ref{main}) with ${}_R \zeta_i^2 [u_i - ||u_i||_{L^\infty(\mathbb R^n)}]$ and use  assumptions to get 
 \begin{equation} \label{divHp}
   -  {}_R \zeta_i^2 [u_i - ||u_i||_{L^\infty(\mathbb R^n)}] \div(\Phi'(|\nabla u_i|^2) \nabla u_i )   \le 0 \quad  \text{in} \ \  \mathbb{R}^n. 
  \end{equation} 
Applying integration by parts,  for each $i=1,\cdots,m$ we obtain 
 \begin{equation} \label{divHpint}
\int_{B_R}  \Phi'(|\nabla u_i|^2)  |\nabla u_i|^2   {}_R \zeta_i ^2  \le 2 \int_{B_R} \Phi'(|\nabla u_i|^2) |\nabla u_i | |\nabla {}_R \zeta_i| [||u_i||_{L^\infty(\mathbb R^n)}-u_i] {}_R \zeta_i
  \end{equation} 
From the Cauchy-Schwarz inequality, for any $R>1$,  we get  
\begin{equation} \label{phi'^2}
\sum_{i=1}^m \int_{B_R}  \Phi'(|\nabla u_i|^2)  |\nabla u_i|^2   \le C R^{n-2}. 
  \end{equation} 
Since $u$ is a stable solution of (\ref{main}) there exists a sequence of functions $\phi=(\phi_i)_{i=1}^m$ that each $\phi_i$ does not change sign. Similar to the proof of Theorem \ref{thmLio},  
 set $\psi_i:= \nabla u_i\cdot \eta$ where $\eta=(\eta',0)\in\mathbb R^{n-1}\times \{0\}$ and define $\sigma_i:=\frac{\psi_i}{\phi_i}$.  Lemma \ref{lmlinear} implies that $\sigma=(\sigma_i)_{i=1}^m$ satisfies (\ref{insigma1}). Note that  $\phi_i^2\sigma^2_i=\psi_i^2 \le  |\nabla u_i|^2$.    From this and the fact that one of conditions (A) or (B) holds there exists a constant $M$ that is independent from $R$ such that
 \begin{eqnarray}\label{dddphi2}
\nonumber \sum_{i=1}^{m}\int_{B_R} \phi_i^2\sigma_i^2
\mathcal A(\nabla u_i) {}_R\Gamma_i \cdot {}_R\Gamma_i &\le&  \sum_{i=1}^{m}\int_{B_{2R}} |\nabla u_i|^2 \mathcal A(\nabla u_i) {}_R\Gamma_i \cdot {}_R\Gamma_i 
\\&\le& M \sum_{i=1}^{m} \int_{B_{2R}} \Phi(|\nabla u_i|^2), 
 \end{eqnarray}
here we have used the fact that $||{}_R \Gamma_i(x)||_{L^\infty(\mathbb R^n)}, ||\nabla u_i||_{L^\infty(\mathbb R^n)}\le C$ for some  $C$ that is independent from $R$.   Note that due to the general  assumption $2s\Phi''(s)+\Phi(s)>0$ when $s>0$ and $\Phi(0)=0$ we have $0\le \Phi(s)\le 2 \Phi'(s) s$ for any $s>0$.  This implies that 
\begin{equation}\label{phit}
0\le \int_{B_{R}} \Phi(  |\nabla u_i|^2) \le 2\int_{B_{R}} \Phi'(  |\nabla u_i|^2) |\nabla u_i|^2 \ \ \text{in} \ \ \mathbb R^n.
  \end{equation}
From (\ref{phit}),  (\ref{dddphi2}) and (\ref{phi'^2}) we get 
\begin{equation}\label{dddphi3}
 \sum_{i=1}^{m}\int_{ B_R} \phi_i^2\sigma_i^2
\mathcal A(\nabla u_i) {}_R\Gamma_i \cdot {}_R\Gamma_i \le C R^{n-2}. 
  \end{equation}
We now apply Proposition \ref{liouville} for $h_{ij}=\partial_j H_{i} (u) \phi_i \phi_j$ and the identity function $f$ to conclude that each $\sigma_i$ is constant when $n\le 4$. This implies that each $u_i$ is a one-dimensional solution of (\ref{main}). Finally (\ref{dddphi3}) implies that each $u_i$ must be constant. 
\end{proof}

In the absence of stability condition, there are various Liouvile theorems for solutions of (\ref{main}), at least for the case of $m=1$, in \cite{ps,s} and references therein.   For the rest of this section,  we mainly focus on the $p$-Laplacian operator that is when $\Phi(s)=\frac{2}{p} s^{\frac{p}{2}}$ and radial solutions of (\ref{main}).  For this operator, we provide an optimal Liouville theorem for radial stable solutions.  The critical dimension is $n = \frac{4p}{p-1}+p$ that is much higher than $n=4$ given in Theorem \ref{thmLio} for not necessarily radial solutions. This implies that Theorem \ref{thmLio} does not seem to be optimal.   

Applying the definition of the $p$-Laplacian operator for radial functions in dimension $n$,  (\ref{main}) reads 
  \begin{equation} \label{radialsys}
   -|u_i'|^{p-2}\left( (p-1)u_i'' +\frac{n-1}{r} u_i'\right)  =   H_i(u)  \quad  \text{for} \ \  r \in \mathbb R^+. 
  \end{equation}   
Suppose that $u=(u_i)_{i=1}^m$ is a radial stable solution of (\ref{radialsys}) then in the light of (\ref{weakstab}) and (\ref{L}) we have 
  \begin{eqnarray*} \int_{\mathbb R^n} |\nabla u_i|^{p-2} (\nabla \phi_i,\nabla \zeta_i) &+& (p-2) \int_{\mathbb R^n} |\nabla u_i|^{p-4} (\nabla u_i,\nabla \phi_i)(\nabla u_i,\nabla \zeta_i) 
\\&=& \sum_{j=1}^m \int_{\mathbb R^n} \partial_j H_i(u)  \phi_j \zeta_i. 
\end{eqnarray*}
We now provide the stability inequality for solutions of (\ref{main}) with the $p$-Laplacian operator.  This is a particular case of Lemma \ref{stabineqphi}. 
\begin{lemma}\label{stabineq}  
  Let $u$ denote a stable solution of (\ref{main}).  Then 
\begin{equation} \label{stability}
\sum_{i,j=1}^{m} \int  \sqrt{\partial_j H_i(u) \partial_i H_j(u)} \zeta_i \zeta_j \le (p-1) \sum_{i=1}^{m} \int  |\nabla u_i|^{p-2} |\nabla \zeta_i|^2, 
\end{equation} 
for any $\zeta=(\zeta_i)_i^m$ where $ \zeta_i \in L^{\infty} (\mathbb R^n) \cap W^{1,2}(\mathbb R^n)$ with compact support and $1\le i\le m$. 
\end{lemma}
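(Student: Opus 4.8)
The claim is an immediate specialization of Lemma \ref{stabineqphi}, so the plan is simply to compute the matrix $\mathcal A$ attached to $\Phi(s)=\frac{2}{p}s^{p/2}$ and to bound the quadratic form $\mathcal A(\eta)\zeta\cdot\zeta$ by $(p-1)|\eta|^{p-2}|\zeta|^2$.

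First I would record the elementary computations. For $\Phi(s)=\frac{2}{p}s^{p/2}$ one has $\Phi'(s)=s^{(p-2)/2}$ and $\Phi''(s)=\frac{p-2}{2}\,s^{(p-4)/2}$, so by the definition (\ref{aij1}),
\[
a_{i,j}(\eta)=(p-2)|\eta|^{p-4}\eta_i\eta_j+|\eta|^{p-2}\delta_{ij},
\]
and consequently, by the identity (\ref{Aetazeta}),
\[
\mathcal A(\eta)\zeta\cdot\zeta=(p-2)|\eta|^{p-4}|\eta\cdot\zeta|^2+|\eta|^{p-2}|\zeta|^2,\qquad \eta,\zeta\in\mathbb R^n.
\]

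Next I would estimate this expression. By the Cauchy--Schwarz inequality $|\eta\cdot\zeta|^2\le|\eta|^2|\zeta|^2$; hence when $p\ge 2$ (so that $p-2\ge 0$) the first term is at most $(p-2)|\eta|^{p-2}|\zeta|^2$, giving $\mathcal A(\eta)\zeta\cdot\zeta\le(p-1)|\eta|^{p-2}|\zeta|^2$, while for $1<p<2$ the first term is nonpositive and the same bound holds a fortiori. Applying this pointwise with $\eta=\nabla u_i$ and $\zeta=\nabla\zeta_i$, summing in $i$ and integrating, turns the right-hand side of the stability inequality of Lemma \ref{stabineqphi} into $(p-1)\sum_{i=1}^{m}\int|\nabla u_i|^{p-2}|\nabla\zeta_i|^2$, which is exactly (\ref{stability}).

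The only mild technical point is the enlargement of the admissible class of test functions from $C_c^1(\mathbb R^n)$ in Lemma \ref{stabineqphi} to the $L^\infty\cap W^{1,2}$ functions of compact support claimed here, which follows from a routine density and truncation argument using the local boundedness of $|\nabla u_i|^{p-2}$ on the support; and the apparent singularity of $|\eta|^{p-4}$ on $\{\nabla u_i=0\}$ when $p<4$ is harmless, since this factor never occurs in isolation --- the combination $\mathcal A(\nabla u_i)\zeta\cdot\zeta$ was already shown in Section \ref{secpoin} to be nonnegative and finite for every $\eta$. I do not expect any genuine obstacle: the statement is a short corollary of Lemma \ref{stabineqphi}.
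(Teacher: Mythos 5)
Your route coincides with the paper's: the paper gives no separate argument for this lemma beyond the remark that it is a particular case of Lemma \ref{stabineqphi}, and your explicit computation of $\mathcal A$ for $\Phi(s)=\tfrac{2}{p}s^{p/2}$, namely $\mathcal A(\eta)\zeta\cdot\zeta=(p-2)|\eta|^{p-4}|\eta\cdot\zeta|^2+|\eta|^{p-2}|\zeta|^2$, followed by the Cauchy--Schwarz bound, is exactly the calculation that remark presupposes. For $p\ge 2$ your proof is complete and correct, and your side remarks (enlarging the test-function class by density, harmlessness of the factor $|\eta|^{p-4}$ on $\{\nabla u_i=0\}$) are fine.

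The treatment of $1<p<2$, however, contains a genuine error. Dropping the nonpositive term $(p-2)|\eta|^{p-4}|\eta\cdot\zeta|^2$ gives $\mathcal A(\eta)\zeta\cdot\zeta\le|\eta|^{p-2}|\zeta|^2$, but for $1<p<2$ one has $p-1<1$, so this is \emph{weaker}, not stronger, than the claimed bound $(p-1)|\eta|^{p-2}|\zeta|^2$; the ``a fortiori'' goes in the wrong direction. Indeed the pointwise matrix inequality with constant $p-1$ is false in that range: the eigenvalues of $\mathcal A(\eta)$ are $(p-1)|\eta|^{p-2}$ in the direction of $\eta$ and $|\eta|^{p-2}$ in the orthogonal directions, so choosing $\zeta\perp\eta$ yields $\mathcal A(\eta)\zeta\cdot\zeta=|\eta|^{p-2}|\zeta|^2>(p-1)|\eta|^{p-2}|\zeta|^2$. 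Your argument therefore proves the lemma only for $p\ge2$, and in general only with the constant $\max\{1,p-1\}$ in place of $p-1$. (The same gap is latent in the paper's one-line justification; note that in the later radial application the relevant gradients $\nabla(u_i'\phi)$ are parallel to $\nabla u_i$, where the quadratic form equals $(p-1)|\nabla u_i|^{p-2}|\nabla\zeta_i|^2$ exactly, so the constant $p-1$ is legitimate there, but as stated for arbitrary $\zeta_i$ and $1<p<2$ the inequality does not follow from Lemma \ref{stabineqphi} by a pointwise bound.)
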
  
For radial solutions stability inequality is of the following form. 
\begin{lemma} \label{stabineqgen}  
  Suppose that $u$ is a radial stable solution of  (\ref{main}).  Then 
\begin{eqnarray} \label{stabindep}
 &&(n-1) \sum_{i=1}^{m} \int_{\mathbb R^n}  \frac{u'^p_i(|x|)}{|x|^2} \phi^2(x) dx \le (p-1) \sum_{i=1}^{m} \int_{\mathbb R^n}    u'^p_i(|x|)   |\nabla \phi(x)|^2 dx  \\ &
& \label{tail}+    \sum_{i,j=1}^{m} \int_{\mathbb R^n}  \left(  \partial_j H_i(u) -\sqrt{\partial_j H_i(u) \partial_i H_j(u) } \right)  u'_i (|x|) u'_j (|x|) \phi^2(x) dx, 
\end{eqnarray} 
   for all $ \phi \in L^{\infty} (\mathbb R^n) \cap W^{1,2}(\mathbb R^n)$ with compact support. 

\end{lemma}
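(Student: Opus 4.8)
The plan is to derive the radial stability inequality from the general stability inequality of Lemma \ref{stabineqphi}, specialized to $\Phi(s)=\frac{2}{p}s^{p/2}$ (so that $\Phi'(s)=s^{p/2-1}$ and $\mathcal A(\eta)=|\eta|^{p-2}\left[(p-2)\frac{\eta\otimes\eta}{|\eta|^2}+\mathrm{Id}\right]$, giving the form in Lemma \ref{stabineq}), by inserting a specific test function that separates the radial and angular parts. First I would take a radial solution $u=(u_i(|x|))_{i=1}^m$, so that $\nabla u_i(x)=u_i'(|x|)\frac{x}{|x|}$ and $|\nabla u_i|^{p-2}=|u_i'(|x|)|^{p-2}$. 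Then, given a compactly supported $\phi\in L^\infty\cap W^{1,2}$, I would test Lemma \ref{stabineq} (equivalently the weak form displayed just above Lemma \ref{stabineq}) with $\zeta_i(x):=\frac{x_k}{|x|}\phi(x)$ for a fixed coordinate direction $k$, and then sum over $k=1,\dots,n$. The choice $\zeta_i=\frac{x_k}{|x|}\phi$ is the standard device (as in Cabré–Capella, Villegas, and the $p$-Laplacian analogues \cite{cc1,cc2,sv,CS,ces}) that produces the sharp angular constant $n-1$.

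The key computation is $\nabla\!\left(\frac{x_k}{|x|}\phi\right)=\frac{x_k}{|x|}\nabla\phi+\phi\,\nabla\!\left(\frac{x_k}{|x|}\right)$, where $\nabla\!\left(\frac{x_k}{|x|}\right)=\frac{e_k}{|x|}-\frac{x_k x}{|x|^3}$; one checks $\left|\nabla\!\left(\frac{x_k}{|x|}\right)\right|^2$ summed over $k$ equals $\frac{n-1}{|x|^2}$, and the cross term $\sum_k \frac{x_k}{|x|}\,\nabla\phi\cdot\nabla\!\left(\frac{x_k}{|x|}\right)$ vanishes because $\sum_k \frac{x_k}{|x|}\left(\frac{e_k}{|x|}-\frac{x_k x}{|x|^3}\right)=\frac{x}{|x|^2}-\frac{x}{|x|^2}=0$. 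Hence $\sum_k|\nabla\zeta_i|^2=|\nabla\phi|^2+\frac{n-1}{|x|^2}\phi^2$. For the left-hand side of Lemma \ref{stabineq}, $\sum_k\zeta_i\zeta_j=\sum_k\frac{x_k^2}{|x|^2}\phi^2=\phi^2$, so the quadratic-form term becomes $\sum_{i,j}\int\sqrt{\partial_jH_i\partial_iH_j}\,u_i'u_j'\,\phi^2$ — wait, more precisely it becomes $\sum_{i,j}\int\sqrt{\partial_jH_i\partial_iH_j}\,\phi^2$, and one then rewrites $\partial_iH_i$ and the off-diagonal terms using the radial equation \eqref{radialsys} to introduce the $u_i'u_j'$ weights; alternatively, one applies Lemma \ref{stabineqphi} directly with $\zeta_i=|\nabla u_i|\,\frac{x_k}{|x|}\psi$-type functions. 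The cleanest route is: feed $\zeta_i=\frac{x_k}{|x|}\phi$ into Lemma \ref{stabineq}, sum on $k$, obtaining
\[
\sum_{i,j=1}^m\int_{\mathbb R^n}\sqrt{\partial_jH_i(u)\partial_iH_j(u)}\,\phi^2
\le (p-1)\sum_{i=1}^m\int_{\mathbb R^n}|u_i'|^{p-2}\left(|\nabla\phi|^2+\frac{n-1}{|x|^2}\phi^2\right),
\]
and then reorganize the diagonal $i=j$ contribution $\sum_i\int\partial_iH_i(u)\phi^2=\sum_i\int|u_i'|^{p-2}\big(\ldots\big)$ via \eqref{radialsys} and integration by parts to land exactly on \eqref{stabindep}–\eqref{tail}, where the "tail" term $\sum_{i,j}\int(\partial_jH_i-\sqrt{\partial_jH_i\partial_iH_j})u_i'u_j'\phi^2$ collects precisely the gap between the full Hessian pairing $\sum_{i,j}\partial_jH_i u_i'u_j'$ coming from differentiating \eqref{radialsys} and the symmetrized lower bound $\sqrt{\partial_jH_i\partial_iH_j}$ produced by the stability inequality.

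The main obstacle I anticipate is bookkeeping the passage from the "plain" stability inequality (with $\zeta_i\zeta_j$ and $|\nabla\zeta_i|^2$ terms) to the form that has the geometric weights $u_i'(|x|)u_j'(|x|)$ attached to the nonlinearity: this requires differentiating the radial equation \eqref{radialsys} in $r$ (the $p$-Laplacian analogue of using $\partial_r u$ as a candidate for the linearized operator), integrating by parts against $\frac{\phi^2}{u_i'}$ or $\phi^2$ with appropriate care near $\{u_i'=0\}$, and matching the boundary-free terms. One must also be mildly careful that $|u_i'|^{p-2}$ may degenerate, which is why the hypotheses require $|\nabla u_i|\in L^\infty\cap W^{1,2}_{loc}$ and $\phi$ compactly supported; the singular weight $|x|^{-2}$ is harmless since $\phi$ vanishes near the origin may be arranged by density, or handled directly as in \cite{sv}. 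Once \eqref{stabindep}–\eqref{tail} is in hand, it is the exact inequality needed to run the Cabré–Capella/Villegas test-function argument yielding the pointwise lower bound \eqref{intlower} and the Liouville statement for $n<\frac{4p}{p-1}+p$.
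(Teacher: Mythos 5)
There is a genuine gap at the heart of your argument: the test function you actually commit to, $\zeta_i=\frac{x_k}{|x|}\phi$, cannot produce the lemma. Summing over $k$ in Lemma \ref{stabineq} with this choice gives, exactly as you compute,
\begin{equation*}
\sum_{i,j=1}^m\int_{\mathbb R^n}\sqrt{\partial_jH_i(u)\,\partial_iH_j(u)}\,\phi^2
\;\le\;(p-1)\sum_{i=1}^m\int_{\mathbb R^n}|u_i'|^{p-2}\Bigl(|\nabla\phi|^2+\tfrac{n-1}{|x|^2}\phi^2\Bigr),
\end{equation*}
in which the Hardy-type term $\frac{n-1}{|x|^2}$ appears on the \emph{right}-hand side as a cost (with weight $|u_i'|^{p-2}$, not $|u_i'|^p$), whereas \eqref{stabindep} needs $(n-1)\int\frac{|u_i'|^{p}}{|x|^2}\phi^2$ on the \emph{left} as a gain, and the nonlinearity term carries no $u_i'u_j'$ weights. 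No ``reorganization via the radial equation'' of the diagonal terms can move that term across with the correct sign and weight: the inequality you derive is true but is simply not the right intermediate statement, and the step you leave unexplained is precisely where the lemma's content lies. (Your parenthetical alternative, $\zeta_i=|\nabla u_i|\frac{x_k}{|x|}\psi$, is essentially the Sternberg--Zumbrun/geometric Poincar\'e route of Theorem \ref{lempoin} and could be made to work, but you do not carry it out, and it would yield $|u_i'||u_j'|$ rather than $u_i'u_j'$ in the mixed term.)

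The paper's mechanism is different: it tests the stability inequality of Lemma \ref{stabineq} with $\zeta_i=u_i'(|x|)\,\phi(x)$ (exploiting that $u_i'$ is almost in the kernel of the linearized operator), and separately multiplies the radial equation \eqref{pderadial} by $(u_i'\phi^2 r^{n-1})'$ and integrates by parts --- equivalently, differentiates the equation in $r$ and tests with $u_i'\phi^2$. The favorable term $(n-1)\int\frac{|u_i'|^{p}}{|x|^2}\phi^2$ then comes out with the correct sign from differentiating the coefficient $\frac{n-1}{r}$, i.e.\ from the identity $\bigl(\frac{n-1}{r}|u_i'|^{p-2}u_i'\bigr)'=-\frac{n-1}{r^2}|u_i'|^{p-2}u_i'+(p-1)\frac{n-1}{r}|u_i'|^{p-2}u_i''$, and the full Hessian pairing $\sum_{i,j}\partial_jH_i\,u_i'u_j'\phi^2$ appears on the left of that identity; subtracting the stability inequality (whose gradient terms $\;(p-1)\int|u_i'|^{p-2}\nabla u_i'\cdot\nabla(u_i'\phi^2)\;$ cancel against the identity) yields exactly \eqref{stabindep}--\eqref{tail}, with the tail collecting the gap $\partial_jH_i-\sqrt{\partial_jH_i\partial_iH_j}$. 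You correctly anticipate this structure in your closing remarks, but the derivation you actually present does not implement it.
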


\begin{proof} Suppose that $u=(u_i)_{i=1}^m$ is a radial solutions of (\ref{main}) that is 
\begin{equation}\label{pderadial}
 -\frac{n-1}{r} |u'_i|^{p-2} u'_i - (p-1)  |u'_i|^{p-2} u_i''   =  H_i(u) ,  
  \end{equation} 
for $0<r<1$  and $i=1,\cdots,m.$   Multiplying the $i^{th}$ equation of (\ref{pderadial}) with $(u'_i \phi^2 r^{n-1})'$   for  $ \phi \in L^{\infty} (\mathbb R^n) \cap W^{1,2}(\mathbb R^n)$ with compact support  and performing integration by parts we obtain  
\begin{eqnarray}\label{www}
\int_{\mathbb R^+}   \sum_{j=1}^m  \partial_j H_i(u) {u'}_j u'_i \phi^2 r^{n-1} &=& -\int_{\mathbb R^+}  \left(\frac{n-1}{r}  |u'_i|^{p-2} u_i' \right)'  (u'_i \phi^2 r^{n-1}) 
\\&& \nonumber + (p-1) \int_{\mathbb R^+}   |u'_i|^{p-2} u_i'' (u'_i \phi^2 r^{n-1}) ', 
  \end{eqnarray}
  for all $0<r<1$ and  $ i=1,\cdots,m$.  In addition, straightforward calculations show that 
 \begin{eqnarray}\label{ff}
  (u'_i \phi^2 r^{n-1})'  &=&   (u'_i \phi^2 )' r^{n-1}+ (n-1) r^{n-2}   u'_i \phi^2, \\
  \label{fff}\left(\frac{n-1}{r}  |u'_i|^{p-2} u_i' \right)' &=& - \frac{n-1}{r^2}  |u'_i|^{p-2} u_i' + (p-1)\frac{n-1}{r}  |u'_i|^{p-2} u_i'' . 
   \end{eqnarray}
Substituting (\ref{ff}) and (\ref{fff}) in  (\ref{www}) we get 
  \begin{eqnarray}\label{www1}
\int_{\mathbb R^+}    \sum_{j=1}^m  \partial_j H_i(u) {u'}_j u'_i \phi^2 r^{n-1} &=& \int_{\mathbb R^+}   \frac{n-1}{r^2}  |u'_i|^{p} \phi^2 r^{n-1} 
\\&& \nonumber + (p-1) \int_{\mathbb R^+}   |u'_i|^{p-2} u_i'' (u'_i \phi^2 )'  r^{n-1} . 
  \end{eqnarray}
 Taking  sum  on the index $i$, we get 
  \begin{eqnarray}\label{wwww}
\int_{\mathbb R^n}   \sum_{i,j=1}^m  \partial_j H_i(u) {u'}_j u'_i \phi^2  &=& (n-1) \sum_{i=1}^m \int_{\mathbb R^n}  \frac{|u'_i|^{p}}{|x|^2}   \phi^2 
\\&& \nonumber + (p-1) \sum_{i=1}^m \int_{\mathbb R^n}   |u'_i|^{p-2}\nabla u_i'\cdot \nabla(u'_i \phi^2 )  .
  \end{eqnarray}
  
  We now apply the stability inequality (\ref{stability}) where $\phi_i$  is replaced by $ u'_i\phi$  for a test function   $\phi$.  Therefore, 
  \begin{equation} \label{stabsub}
  \sum_{i,j=1}^{m} \int_{\mathbb R^n} \sqrt{\partial_j H_i(u) \partial_i H_j(u)} u'_i u'_j\phi^2   \le (p-1) \sum_{i=1}^{m}  \int_{\mathbb R^n}  |u'_i|^{p-2}  |\nabla ( u'_i\phi)|^2 . 
\end{equation} 
  Expanding the integrand of the right-hand side we get 
  \begin{eqnarray*}\label{}
  (p-1)  |u'_i|^{p-2}  ( |\nabla ( u'_i\phi)|^2 &=&  (p-1)  |u'_i|^{p-2} \left( |u'_i|^2|\nabla \phi|^2 +  |\nabla u'_i|^2\phi^2+ \frac{1}{2} \nabla \phi^2\cdot\nabla {u'_i}^2 \right)
  \\&=& (p-1)  |u'_i|^{p-2} \left(|u'_i|^2 |\nabla\phi|^2+  \nabla (\phi^2 u_i') \cdot \nabla u_i'\right). 
  \end{eqnarray*}
  From this, (\ref{www}) and (\ref{stabsub}) we get the desired result. 
  
\end{proof}  

Now, we are ready to classify  radial stable solutions of (\ref{radialsys}). 

\begin{thm} \label{radialbound}
 Suppose that  $p,m\ge 1$ and $u$ is a radial stable solution of symmetric system (\ref{main})  where $H_i\in C^1(\mathbb R^m)$ whenever $\partial_j H_i(u)> 0$ for all $i,j=1,\cdots,m$.   Then, there exist positive constants $r_0$ and $C_{n,m,p}$ such that 
 \begin{equation}\label{lower}
 \sum_{i=1}^{m} | u_i(r)| \ge C_{n,m,p}   \left\{
                      \begin{array}{ll}
                        r^{\frac{1}{p} \left(p+2-n+2\sqrt{\frac{n-1}{p-1}}\right)}, & \hbox{if $n \neq \frac{4p}{p-1}+p$,} \\
                      \log r, &  \hbox{if $n =  \frac{4p}{p-1}+p$,} 
                           \end{array}
                    \right.
                    \end{equation}
                    where $r \ge r_0$ and $C_{n,m,p}$ is independent from $r$. In addition, assuming that each  $u_i$ is bounded  for $1\le i \le m$, then $n>\frac{4p}{p-1}+p$ and  there is a constant $C_{n,m,p}$ such that 
  \begin{equation}\label{limit}
\sum_{i=1}^{m} | u_i(r) - u_i^\infty| \ge C_{n,m,p}  r^{\frac{1}{p} \left(p+2-n+2\sqrt{\frac{n-1}{p-1}}\right)}, 
                    \end{equation}
                    where $r\ge 1$ and $u_i^\infty:=\lim_{r\to\infty} u_i{(r)}$ for each $i$.  
 \end{thm}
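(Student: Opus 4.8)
The plan is to exploit the hypothesis that $\mathbb{H}$ is symmetric, which makes the nonlocal ``tail'' term in the radial stability inequality of Lemma~\ref{stabineqgen} disappear, and then to run a weighted Hardy/Cabr\'e--Capella argument whose critical exponent is exactly $\gamma^*:=\sqrt{(n-1)/(p-1)}$. Since the system is symmetric (Definition~\ref{symmetric}) we have $\partial_j H_i(u)=\partial_i H_j(u)$ for all $i,j$, and (the sign condition $\partial_j H_i(u)>0$ being part of the hypotheses) $\partial_j H_i(u)-\sqrt{\partial_j H_i(u)\,\partial_i H_j(u)}=0$; hence the term \eqref{tail} in Lemma~\ref{stabineqgen} vanishes. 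Writing $w(r):=\sum_{i=1}^m|u_i'(r)|^p\ge 0$ (continuous, since $u$ is classical) and restricting to radial test functions in \eqref{stabindep}, I am left with the scalar weighted inequality
\begin{equation*}
(n-1)\int_0^\infty w(r)\,\phi(r)^2\,r^{n-3}\,dr\ \le\ (p-1)\int_0^\infty w(r)\,\phi'(r)^2\,r^{n-1}\,dr
\end{equation*}
for every Lipschitz $\phi$ with compact support in $(0,\infty)$. If $u$ is not constant then $w\not\equiv 0$, and I will also use the fact, standard for radial stable solutions, that each non-constant $u_i'$ has a fixed sign on $(0,\infty)$, so that $\int_a^b|u_i'|=|u_i(b)-u_i(a)|$.

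The main step is to feed into this inequality the test functions $\phi_R(r)=(r/\rho_0)^{-\gamma}$ on $[\rho_0,R]$, with $\gamma<\gamma^*$ and $\rho_0$ chosen so that $w(\rho_0)>0$, cut off linearly to $0$ on $[\rho_0/2,\rho_0]$ and on $[R,2R]$. Expanding $\phi_R'$, absorbing the cross term by Young's inequality, bounding the fixed inner edge contribution by a constant $C_0$, and using $r^2\le 4R^2$ on $[R,2R]$, one obtains for $M_\gamma(R):=\int_{\rho_0}^R w(r)\,r^{n-3-2\gamma}\,dr$ a self-improving estimate of the shape
\begin{equation*}
\Big(1-\tfrac{(p-1)\gamma^2}{n-1}\Big)\,M_\gamma(R)\ \le\ C_0\ +\ C\big(M_\gamma(2R)-M_\gamma(R)\big),
\end{equation*}
whose left prefactor is a positive constant exactly because $\gamma<\gamma^*$. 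This forces $\int_R^{2R} w(r)\,r^{n-3-2\gamma}\,dr=M_\gamma(2R)-M_\gamma(R)$ to grow at least geometrically along $R\mapsto 2R$; letting $\gamma\uparrow\gamma^*$, and at the borderline replacing the pure power by $(r/\rho_0)^{-\gamma^*}$ times a logarithmic cutoff --- the mechanism that produces the $\log r$ alternative and singles out the dimension $n=\frac{4p}{p-1}+p$ (a short computation identifies $\gamma^*<\frac{p+1}{p-1}$ with $n<\frac{4p}{p-1}+p$) --- one can upgrade this to the sharp bound
\begin{equation*}
\sum_{i=1}^m|u_i'(r)|^p\ \gtrsim\ r^{\,2+2\gamma^*-n}\qquad(r\ge r_0),
\end{equation*}
with a logarithmic correction in the critical dimension, and hence $\sum_i|u_i'(r)|\gtrsim r^{(2+2\gamma^*-n)/p}$ by the power-mean inequality.

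To conclude, I would integrate this, using the fixed sign of each non-constant $u_i'$: for $R\ge r\ge r_0$,
\begin{equation*}
\sum_i|u_i(R)-u_i(r)|=\sum_i\int_r^R|u_i'(s)|\,ds\ \gtrsim\ \int_r^R s^{(2+2\gamma^*-n)/p}\,ds .
\end{equation*}
When $\tfrac1p(p+2-n+2\gamma^*)>0$, i.e.\ $n<\frac{4p}{p-1}+p$, letting $R\to\infty$ and absorbing $\sum_i|u_i(r_0)|$ gives $\sum_i|u_i(r)|\gtrsim r^{\frac1p(p+2-n+2\gamma^*)}$; when the exponent is $0$, i.e.\ $n=\frac{4p}{p-1}+p$, the integral is $\sim\log(R/r)$, giving $\sum_i|u_i(r)|\gtrsim\log r$. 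In either case a bounded non-constant solution is impossible, so for $n\le\frac{4p}{p-1}+p$ bounded radial stable solutions must be constant. In particular, if $u$ is bounded and non-constant then necessarily $n>\frac{4p}{p-1}+p$, the exponent is negative, each $u_i'$ is integrable at infinity (fixed sign together with boundedness), $u_i^\infty:=\lim_{r\to\infty}u_i(r)$ exists, and integrating from $r$ to $\infty$ yields $\sum_i|u_i(r)-u_i^\infty|\gtrsim r^{\frac1p(p+2-n+2\gamma^*)}$.

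The hard part is the second paragraph: the $\gamma<\gamma^*$ test functions only yield a dichotomy (either $w$ decays very fast, or there is power growth), and one must exclude the fast-decay branch and extract the \emph{sharp} exponent $2+2\gamma^*-n$ together with the exact logarithmic borderline $n=\frac{4p}{p-1}+p$; this forces working at the critical exponent with delicately tuned logarithmic cutoffs, and is where the value of the critical dimension really comes from. A second, subtler point is promoting an annular/integral lower bound for $w$ to the pointwise bound on $\sum_i|u_i(r)|$ in the statement: this uses the annular form of the estimate, the fixed-sign monotonicity of the $u_i$, and (in the bounded case) an interior gradient bound for the $p$-Laplacian --- and establishing the sign-definiteness of each $u_i'$ for $m\ge 2$ itself requires an argument that genuinely uses the symmetry of $\mathbb{H}$ in the system stability inequality.
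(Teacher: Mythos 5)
Your first paragraph (the symmetry of $\mathbb{H}$ plus the sign condition kills the tail term \eqref{tail} in Lemma \ref{stabineqgen}, reducing matters to the clean weighted inequality for $w(r)=\sum_i|u_i'(r)|^p$) and your last paragraph (integrate and sum dyadically, with the $\log$ alternative at $n=\frac{4p}{p-1}+p$) do match the paper. But the heart of your argument --- the second paragraph --- is missing, and the route you sketch there does not obviously close. From subcritical powers $\gamma<\gamma^*:=\sqrt{\frac{n-1}{p-1}}$ you only obtain the dichotomy you yourself acknowledge, and the upgrade you then require, a \emph{pointwise} lower bound $\sum_i|u_i'(r)|^p\gtrsim r^{2+2\gamma^*-n}$, is strictly stronger than anything the stability inequality \eqref{stabindep} can deliver: it only controls averaged quantities of $w$, and nothing prevents some $u_i'$ from vanishing at isolated radii, where such a pointwise bound fails. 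So ``let $\gamma\uparrow\gamma^*$ with delicately tuned logarithmic cutoffs'' is not an argument but exactly the open step, and it is moreover a detour, since the sharp exponent is never needed in pointwise form.

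The paper gets the sharp bound in one stroke by a different device: it tests the inequality with a two-piece function, namely the \emph{critical} power $t^{-\gamma^*}$ on $[1,r]$ (so the bulk terms on the two sides cancel identically) glued on $[r,R]$ to the weight-adapted, capacity-type tail $\phi(t)\propto\int_t^R \frac{dz}{z^{n-1}\sum_i|u_i'|^p(z)}$. The only surviving outer contribution is $r^{-2\gamma^*}\bigl(\int_r^R\frac{dz}{z^{n-1}\sum_i|u_i'|^p(z)}\bigr)^{-1}$, which immediately yields the key estimate \eqref{LR}, $\int_r^R\frac{ds}{s^{n-1}\sum_i|u_i'|^p(s)}\le C_{n,m,p}\,r^{-2\gamma^*}$. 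A single H\"{o}lder inequality with exponents $p+1$ and $\frac{p+1}{p}$, as in \eqref{l=r}, converts this harmonic-mean information into the lower bound $\sum_i\int_r^{2r}|u_i'|\ge C\,r^{\frac1p(p+2-n+2\gamma^*)}$ (cf.\ \eqref{u2r}), and the dyadic summation then produces \eqref{lower} and \eqref{limit}, the logarithm appearing automatically when the exponent vanishes, i.e.\ at $n=\frac{4p}{p-1}+p$. If you wish to salvage your route you must either carry out the critical-exponent argument you postponed or switch to a weight-adapted test function of this kind. A secondary point: the sign-definiteness of each $u_i'$ that you invoke as ``standard'' is not established (for $m\ge 2$ it is not obvious, and the paper itself passes from $\sum_i\int_r^{2r}|u_i'|$ to $\sum_i|u_i(2r)-u_i(r)|$ without comment), so if your write-up relies on it you must prove it or at least isolate it as an assumption; this, however, is a shared wrinkle, not the main gap.
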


\begin{proof} Let $u=(u_i)_{i=1}^m$ be a radial stable solution of  symmetric system (\ref{radialsys}).  From Lemma \ref{stabineqgen}, 
  the stability inequality becomes 
 \begin{eqnarray} \label{stabindep11}
 &&(n-1) \sum_{i=1}^{m} \int_{\mathbb R^n}  \frac{u'^p_i(|x|)}{|x|^2} \phi^2(x) dx \le (p-1) \sum_{i=1}^{m} \int_{\mathbb R^n}    u'^p_i(|x|)   |\nabla \phi(x)|^2 dx , 
\end{eqnarray} 
   for all $ \phi \in L^{\infty} (\mathbb R^n) \cap W^{1,2}(\mathbb R^n)$ with compact support.  Note that the nonlinearity  $H=(H_i)_{i=1}^m$ does not appear in (\ref{stabindep11}).   The methods and idea that we apply in this proof are strongly motivated by the ones used in \cite{cc1,cc2,CS,svil,ces} for the case of a scalar equation, that is when $m=1$, and in \cite{mf,cf} for the case of system of equations that is when $m\ge 2$.    Test this inequality on the following test function $\phi\in W^{1,2}(\mathbb{R}^+)\cap L^{\infty} (\mathbb{R}^+)$ 
$$\phi(t):=   \left\{
                      \begin{array}{ll}
                        1, & \hbox{if $0\le t \le 1$;} \\
                       t^{-\sqrt{\frac{n-1}{p-1}}}, & \hbox{if $1\le t \le r$;} \\
                        \frac{    r^{-\sqrt{\frac{n-1}{p-1}}}   }{  \int_{r}^{R} \frac{dz}{z^{n-1} \sum_{i=1}^{m} {|u'_i|}^p(z)  } }    \int_{t}^{R} \frac{dz}{z^{n-1}  \sum_{i=1}^{m}  {|u'_i|}^p(z)  }  , & \hbox{if $r\le t \le R$;} \\
                          0, & \hbox{if $R\le t $,} 
                           \end{array}
                    \right.$$
                    for any $1\le r\le R$.   
                Straightforward calculations show that for the given test function $\phi$, the left-hand side of the stability inequality (\ref{stabindep11}) has the following lower bound,
\begin{equation}\label{LHS}
(n-1)  \int_{0}^{1} \sum_{i=1}^{m} {|u'_i|}^p(t)   t^{n-3} dt + (n-1)  \int_{1}^{r} \sum_{i=1}^{m} {|u'_i|}^p(t)   t^{   -2 \sqrt{\frac{n-1}{p-1}} +n-3} dt.
\end{equation}
Similarly we can simplify  the right-hand side of the stability inequality using the fact that 
$$\phi'(t)=   \left\{
                      \begin{array}{ll}
                        0, & \hbox{if $0\le t <1$;} \\
                      -\sqrt{\frac{n-1}{p-1}} t^{-\sqrt{\frac{n-1}{p-1}}-1}, & \hbox{if $1< t < r$;} \\
                     - \frac{    r^{-\sqrt{\frac{n-1}{p-1}}}   }{  \int_{r}^{R} \frac{dz}{z^{n-1} \sum_{i=1}^{m} {|u'_i|}^p(z)  } }     \frac{1}{t^{n-1}  \sum_{i=1}^{m}  {|u'_i|}^p(t)  }  , & \hbox{if $r\le t \le R$;} \\
                          0, & \hbox{if $R\le t $.} \\
                           \end{array}
                    \right.$$           
Substituting this in (\ref{stabindep11}), the right-hand side of the inequality would be  equivalent to
\begin{equation}\label{RHS}
(n-1) \int_{1}^{r}   t^{  -2\sqrt{\frac{n-1}{p-1}}+ n-3}  \sum_{i=1}^{m} {|u'_i|}^p(t)  dt +  \frac{    r^{-2\sqrt{\frac{n-1}{p-1}}   } }{  \int_{r}^{R} \frac{dz}{z^{n-1}\sum_{i=1}^{m} {|u'_i|}^p(z)   }  } .
\end{equation}
Collecting  (\ref{LHS}) and (\ref{RHS}), in the light of (\ref{stabindep11}), we get  
\begin{equation}\label{LR}
\int_{r}^{R} \frac{ds}{s^{n-1}\sum_{i=1}^{m} {|u'_i|}^p(s)   }  \le C_{n,m,p}  r^{-2\sqrt{\frac{n-1}{p-1}}}   \ \ \ \ \forall 1\le r\le R,
\end{equation}
where  the constant $C_{n,m,p}$ is independent from $r,R$ and it is  given as 
$$C_{n,m,p}:=\frac{p-1}{(n-1)  \int_{0}^{1} \sum_{i=1}^{m} {|u'_i|}^p(t)   t^{n-3} dt}.$$
   Applying the H\"{o}lder's inequality we obtain 
\begin{eqnarray}\label{l=r}
\ \ \ \ \ \  \int_r^R \frac{ds}{s^{\frac{n-1}{p+1}}} &= &\int_r^R \frac{\left(  \sum_{i=1}^{m} {|u'_i|}^p(s)   \right)^{\frac{1}{p+1}} }{s^{\frac{n-1}{p+1}} \left(  \sum_{i=1}^{m} {|u'_i|}^p(s)   \right)^{\frac{1}{p+1}}  } ds  
\\ & \le& \nonumber
 \left( \int_r^R  \frac{ds}{  s^{n-1}   \sum_{i=1}^{m} {|u'_i|}^p(s)   }         \right)^{\frac{1}{p+1}}   \left( \int_r^R   \left(\sum_{i=1}^{m} {|u'_i|}^p(s)     \right)^{\frac{1}{p}}    ds  \right)^{\frac{p}{p+1}} .   
\end{eqnarray}
From (\ref{LR})  we get 
\begin{equation}\label{l=r2}
\int_r^R \frac{ds}{s^{\frac{n-1}{p+1}}} \le C^{\frac{1}{p+1}}_{n,m,p}
\ r^{-\frac{p}{p+1}\sqrt{\frac{n-1}{p-1}}}    \left(  \sum_{i=1}^m \int_r^R    |u'_i(s) | ds \right)^{\frac{p}{p+1}} .
\end{equation}
Performing straightforward  computation for the integral in the left-hand side of (\ref{l=r2}) and taking $R=2r$, for any $n\ge 2$,  one can  get 
\begin{equation}\label{u2r}
 \sum_{i=1}^m | u_i(2r)-u_i(r)| \ge C_{n,m,p} r^{ \frac{1}{p} \left(  p+ 2-n +2 \sqrt{\frac{n-1}{p-1}}\right)}.
\end{equation}
Note that  each $u_i$ is bounded. Therefore,  (\ref{u2r}) implies 
 \begin{eqnarray*}\label{}
\sum_{i=1}^{m} |u_i(r)- u_i^\infty| &=& \sum_{i=1}^m \sum_{k=1}^\infty  | u_i(2^k r)-u_i(2^{k-1}r)| 
\\&\ge& C\sum_{k=1}^\infty  (2^{k-1}r)^{ \frac{1}{p} \left(  p+ 2-n +2 \sqrt{\frac{n-1}{p-1}}\right)}.
\end{eqnarray*}
 This proves the second part of the theorem that is (\ref{limit}) and $n> \frac{4p}{p-1}+p$.  To prove the first part of the theorem that is (\ref{lower}),  without loss of generality,  we assume that $2\le n\le  \frac{4p}{p-1}+p$.    Define $r=2^{k-1}r_1$ where $1\le r_1<2$. Therefore, 
\begin{eqnarray*}\label{}
\sum_{i=1}^{m} |u_i(r)| &=& \sum_{i=1}^m  | u_i(r)-u_i(r_1)| - \sum_{i=1}^m |u_i(r_1)| 
\\&=&\sum_{i=1}^m \sum_{j=1}^{k-1}  | u_i(2^j r_1)-u_i(2^{j-1}r_1)| - \sum_{i=1}^m |u_i(r_1)| \\&\ge&  C_{n,m} \sum_{i=1}^m \sum_{j=1}^{k-1} (2^{j-1}r_1)^{ ^{ \frac{1}{p} \left(  p+ 2-n +2 \sqrt{\frac{n-1}{p-1}}\right)}  } -\sum_{i=1}^m |u_i(r_1)|.
\end{eqnarray*}
This  proves the pointwise bound  (\ref{lower}) when $1< n< \frac{4p}{p-1}+p$. Finally, when we have  dimension $n= \frac{4p}{p-1}+p$,  from the above inequality,  we can prove 
\begin{eqnarray*}\label{}
\sum_{i=1}^{m} |u_i(r)| \ge C_{n,m}  (k-1)  - \sum_{i=1}^m |u_i(r_1)|.
\end{eqnarray*}
The fact that $k-1=\frac{\log r - \log r_1}{\log 2}$ completes the proof. 

\end{proof}

\end{document}